\numberwithin{equation}{section}
\newtheorem{theorem}{Theorem}[section]
\newtheorem{proposition}[theorem]{Proposition}
\newtheorem{corollary}[theorem]{Corollary}
\newtheorem{conjecture}[theorem]{Conjecture}
\newtheorem{lemma}[theorem]{Lemma}
\theoremstyle{definition}
\newtheorem{remark}[theorem]{Remark}
\newtheorem{example}[theorem]{Example}
\newtheorem{problem}[theorem]{Problem}
\begin{document}

\def\XX{{\bf{X}}}
\def\YY{{\bf{Y}}}
\def\EE{{\bf{E}}}

\def\reg{\operatorname{reg}}
\def\ann{\operatorname{ann}}
\def\Spec{\operatorname{Spec}}
\def\Cl{\operatorname{Cl}}
\def\Cld{\operatorname{-Cl}}
\def\Inv{\operatorname{Inv}}
\def\Int{\operatorname{Int}}
\def\Prin{\operatorname{Prin}}
\def\Pic{\operatorname{Pic}}
\def\F{\operatorname{F}}
\def\SS{\operatorname{S}}
\def\SSS{\operatorname{S}}
\def\Inver{\operatorname{-Inv}}
\def\Hom{\operatorname{Hom}}
\def\TM{t\textup{-Max}}
\def\TS{t\textup{-Spec}}
\def\Max_\ast{\ast\textup{-Max}}
\def\Max{\textup{Max}}
\def\SSpec{\textup{-Spec}}
\def\SGV{\ast\textup{-GV}}
\def\WBF{\operatorname{WBF}}
\def\sKr{\operatorname{sKr}}
\def\Loc{\operatorname{Loc}}
\def\Ass{\operatorname{Ass}}
\def\wAss{\operatorname{wAss}}
\def\ann{\operatorname{ann}}
\def\Mod{\operatorname{mod}}
\def\ZZ{{\mathbb Z}}
\def\CC{{\mathbb C}}
\def\NN{{\mathbb N}}
\def\RR{{\mathbb R}}
\def\QQ{{\mathbb Q}}
\def\FF{{\mathbb F}}
\def\AA{{\mathbb A}}
\def\OO{{\mathcal O}}
\def\HH{{\mathbb{D}}}
\def\aa{{\bf{a}}}
\def\bb{{\bf{b}}}
\def\cc{{\bf{c}}}
\def\nn{{\bf{n}}}
\def\qq{{\bf{q}}}
\def\ss{{\bf{s}}}
\def\tt{{\bf{t}}}
\def\uu{{\bf{u}}}
\def\xx{{\bf{x}}}
\def\aaa{{\mathfrak a}}
\def\bbb{{\mathfrak b}}
\def\ppp{{\mathfrak p}}
\def\qqq{{\mathfrak q}}
\def\mm{{\mathfrak m}}
\def\PPP{{\mathfrak P}}
\def\nnn{{\underline{n}}}
\def\mmm{{\underline{m}}}
\def\lll{{\underline{l}}}
\def\kkk{{\underline{k}}}
\def\coeff{{\delta}}
\def\coeffb{{d}}
\def\ann{\operatorname{ann}}

\title{Integer-valued polynomials on commutative rings and modules}
\date{\today} \author{Jesse Elliott} \address{Department of Mathematics\\ California
State University, Channel Islands\\ Camarillo, California 93012}
\email{jesse.elliott@csuci.edu}

\maketitle

\begin{abstract}
The ring of integer-valued polynomials on an arbitrary integral domain is well-studied.  In this paper we initiate and provide motivation for the study of integer-valued polynomials on commutative rings and modules.  Several examples are computed, including the integer-valued polynomials over the ring $R[T_1,\ldots, T_n]/(T_1(T_1-r_1), \ldots, T_n(T_n-r_n))$ for any  commutative ring $R$ and any elements $r_1, \ldots, r_n$ of $R$,  as well as the integer-valued polynomials over the Nagata idealization $R(+)M$ of $M$ over $R$, where $M$ is an $R$-module such that every non-zerodivisor on $M$ is a non-zerodivisor of $R$.
\end{abstract}

\section{Introduction}

The ring of integer-valued polynomials on an integral domain is well-studied \cite{cah} \cite{nar}.  In this paper we initiate and provide motivation for the study of integer-valued polynomials on commutative rings and modules.

All rings in this paper are assumed commutative with identity.   Let $R$ be a ring.  The {\it total quotient ring} $T(R)$ of $R$ is the localization $U^{-1}R$ of the ring $R$ at the multiplicative set $U = R^{\reg}$ of all non-zerodivisors of $R$.  We define
$$\Int(R) = \{f \in T(R)[X]: f(R) \subseteq R\},$$
which is a subring of $T(R)[X]$ containing $R[X]$.  We call the ring $\Int(R)$ the ring of {\it integer-valued polynomials on $R$}.   The most well-studied case is where $R = \ZZ$: the ring $\Int(\ZZ)$ is  called  the ring of {\it integer-valued polynomials} and is known to be a non-Noetherian Pr\"ufer domain of Krull dimension two possessing a free $\ZZ$-module basis consisting of the binomial coefficient polynomials $1, X, {X \choose 2}, {X \choose 3}, \ldots$.  Its prime spectrum and Picard group, for example, are also known \cite{cah}.  In the late 1910s, P\'olya and Ostrowski initiated the study of further rings of integer-valued polynomials, namely, $\Int(\mathcal{O}_K)$ for the ring $\mathcal{O}_K$ of integers in a number field $K$.   There is an extensive literature on integer-valued polyomials rings over integral domains, including the two texts \cite{cah} and \cite{nar} witten in the 1990s.  

Roughly a decade ago, the author proved a few elementary universal properties of $\Int(D)$ for any integral domain $D$ \cite{ell0}.  More recently the author proved some less elementary universal properties of $\Int(R)$ for specific classes of rings $R$ \cite{ell}.  Consider the following conditions on a ring $R$.
\begin{enumerate}
\item $\Int(R)$ is free as an $R$-module.
\item  $\Int(R)_\ppp$ is free as an $R_\ppp$-module and equals $\Int(R_\ppp)$ for every maximal ideal $\ppp$ of $R$.
\item $\Int(R)$ has a unique structure of an {\it $R$-plethory} \cite{bor} with unit given by the inclusion $R[X] \longrightarrow \Int(R)$.
\item $\Int(R)$ is the largest $R$-plethory contained in $T(R)[X]$.
\item $\Int(R)$ left-represents a right adjoint for the inclusion from $\mathsf{C}$ to the category $R$-$\mathsf{Alg}$ of $R$-algebras for a unique full, bicomplete, and bireflective subcategory $\mathsf{C}$ of $R$-$\mathsf{Alg}$ (and an $R$-algebra $A$ is in $\mathsf{C}$ if and only if for all $a \in A$ there is a unique $R$-algebra homomorphism $\Int(R) \longrightarrow A$ sending $X$ to $a$).
\end{enumerate}
In general, conditions (3)--(5) are equivalent, and they  hold if either (1) or (2) holds  \cite[Theorems 2.9 and 7.11]{ell}.  Moreover, conditions (4) and (5), when they hold, each provide a universal property for the ring $\Int(R)$.   It is known, for example, that conditions (1) and (2) hold if $R$ is a Dedekind domain or a UFD, and condition (2) holds if $R$ is a Krull domain or more generally a domain of Krull type \cite[Theorem 7.11]{ell}.   Thus, the ring $\Int(R)$ can be characterized by the universal properties (4) and (5) for any ring $R$ satisfying any of the conditions (1)--(3), which includes, for example, any domain $R$ of Krull type.

The author has conjectured that there exist integral domains $D$ such that $\Int(D)$ is not free as a $D$-module, or more generally such that the equivalent conditions (3)--(5) do not hold for $R = D$ \cite{ell, ell2}.   For the two universal properties (4) and (5) of $\Int(R)$ to hold one need not require that $R$ be a domain, since if $R_1, \ldots, R_n$ are rings satisfying condition (4) (or $(5)$) then by  \cite[Corollary 7.15]{ell} the direct product $\prod_{i = 1}^n R_i$ also satisfies conditions (4) and (5), and one has $\Int\left(\prod_{i = 1}^n R_i\right) \cong \prod_{i = 1}^n \Int(R_i)$.  The fact that these universal properties of $\Int(R)$ can be easily shown to hold for some rings $R$ with zerodivisors but not for others provides motivation for the study of integer-valued polynomials over such rings.

Beyond the direct products mentioned above, our first attempt (with students Ryan Hoffman, Cybill Kreil, and Alexander McBroom) was to compute the ring $\Int(R[\varepsilon])$ of integer-valued polynomials on the ring $R[\varepsilon] = R[T]/(T^2)$ of dual numbers over $R$, where $\varepsilon$ is the image of $T$ in $R[T]/(T^2)$.  In Section 2 we compute more generally the ring of integer-valued polynomials over the ring
$$R[\varepsilon_1, \ldots, \varepsilon_n] = R[T_1,\ldots, T_n]/(T_1^2, \ldots, T_n^2)$$
of {\it $n$-hyper-dual numbers over $R$},  where $\varepsilon_i$ for all $i$ is the image of $T_i$ in the given quotient ring.    A special feature of this example is that it provides an independent motivation for the study of integer-valued derivatives, as studied, for example, in \cite[Chapter IX]{cah}.   Another feature is that it implies that conditions (1)--(5) do not hold for $R = \ZZ[\varepsilon]$.

Let $A$ a subset of $T(R)$.  We define
$$\Int(A,R) = \Int^{(0)}(A,R) = \{f \in T(R)[X]: f(A) \subseteq R\}$$
and 
$$\Int^{(1)}(A, R) = \{f \in \Int(A, R): f' \in \Int(A,R)\},$$
where $f'$ denotes the (formal) derivative of $f$.  The former is a subring of $T(R)[X]$ called the ring of {\it integer-valued polynomials on $A$ over $R$}, and the  latter is the subring of $\Int(A,R)$ consisting of all integer-valued polynomials $f$ on $A$ over $R$ whose derivative $f'$ is also integer-valued on $A$.  More generally, we define
\begin{align*}
\Int^{(n)}(A,R) & = \{f \in T(R)[X]: f, f', f'', \ldots, f^{(n)} \in \Int(A,R)\} \\ & = \{f \in \Int(A,R): f' \in \Int^{(n-1)}(A,R) \}
\end{align*}
for all positive integers $n$, and also
$$\Int^{(\infty)}(A,R) = \{f \in T(R)[X]: f, f', f'', \ldots \in \Int(A,R)\} = \bigcap_{n = 0}^\infty \Int^{(n)}(A, R).$$  We write $\Int^{(n)}(R)$ for $\Int^{(n)}(R,R)$ for all $n \in \ZZ_{\geq 0} \cup \{\infty\}$.  The rings $\Int^{(n)}(R)$ are well-studied in the case where $R$ is a domain \cite[Chapter IX]{cah}.

As an $R$-module, the ring $R[\varepsilon_1, \ldots, \varepsilon_n]$ is free of rank $2^n$, with a basis consisting of the elements $\varepsilon_S = \prod_{i \in S} \varepsilon_i$ for all subsets $S$ of the set $\{1,2,\ldots, n\}$; thus, as $R$-modules, one has $$R[\varepsilon_1,\ldots,\varepsilon_n] = \bigoplus_{S \subseteq \{1,2,\ldots,n\}} R \varepsilon_S.$$  The following theorem is proved in Section 2.

\begin{theorem}[with Ryan Hoffman, Cybill Kreil, and Alexander McBroom]\label{hyperdualtheorem}
Let $R$ be a ring, let $n$ be a positive integer, and let $k$ be a nonnegative integer.  One has  
$$\Int^{(k)}(R[\varepsilon_1, \ldots, \varepsilon_n]) = \sum_{i = 0}^n (\varepsilon_1,\ldots, \varepsilon_n)^i \Int^{(n+k-i)}(R)[\varepsilon_1, \ldots, \varepsilon_n].$$
Equivalently, as $R$-modules, one has
$$\Int^{(k)}(R[\varepsilon_1, \ldots, \varepsilon_n]) = \bigoplus_{S \subseteq \{1,2,\ldots,n\}}  \Int^{(n+k-|S|)}(R) \varepsilon_S.$$
\end{theorem}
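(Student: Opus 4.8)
The plan is to reduce to the one-variable case and induct on $n$, exploiting the fact that $R[\varepsilon_1,\ldots,\varepsilon_n]$ is the ring of dual numbers over $R[\varepsilon_1,\ldots,\varepsilon_{n-1}]$. The first step is a general lemma about dual numbers: for an \emph{arbitrary} ring $B$ (with $\varepsilon^2 = 0$), I claim
$$T(B[\varepsilon]) = T(B)[\varepsilon] \qquad\text{and}\qquad \Int^{(k)}(B[\varepsilon]) = \Int^{(k+1)}(B) \oplus \Int^{(k)}(B)\,\varepsilon \quad (k \ge 0),$$
the direct sum referring to the unique decomposition $f = g + h\varepsilon$ with $g,h \in T(B)[X]$. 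The first identity follows by computing $(B[\varepsilon])^{\reg}$: an element $a+b\varepsilon$ is a non-zerodivisor iff $a \in B^{\reg}$ (if $a$ is a zerodivisor with $ra = 0$ and $r \ne 0$, then $(a+b\varepsilon)(r\varepsilon) = 0$; conversely $a \in B^{\reg}$ forces any annihilator to vanish), whence $T(B[\varepsilon]) = (B^{\reg})^{-1}B[\varepsilon] = T(B)[\varepsilon]$.

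For the second identity I would expand to first order. Writing $f = g + h\varepsilon$ and differentiating in $X$ gives $f^{(j)} = g^{(j)} + h^{(j)}\varepsilon$, and for $s = a + b\varepsilon$ with $a,b \in B$ the relation $\varepsilon^2 = 0$ yields
$$f^{(j)}(s) = g^{(j)}(a) + \big(g^{(j+1)}(a)\,b + h^{(j)}(a)\big)\varepsilon.$$
Thus $f^{(j)}(B[\varepsilon]) \subseteq B[\varepsilon]$ for all $s$ iff $g^{(j)}, g^{(j+1)}, h^{(j)} \in \Int(B)$ (take $b = 0$ and $b = 1$ to separate the conditions). Imposing this for $j = 0, 1, \ldots, k$ collapses to $g \in \Int^{(k+1)}(B)$ and $h \in \Int^{(k)}(B)$, which is the asserted decomposition.

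With the lemma in hand, I would induct on $n$, the case $n = 1$ being the lemma applied to $B = R$. For the inductive step, apply the lemma to $B = R[\varepsilon_1,\ldots,\varepsilon_{n-1}]$ and $\varepsilon = \varepsilon_n$, obtaining $\Int^{(k)}(R[\varepsilon_1,\ldots,\varepsilon_n]) = \Int^{(k+1)}(R[\varepsilon_1,\ldots,\varepsilon_{n-1}]) \oplus \Int^{(k)}(R[\varepsilon_1,\ldots,\varepsilon_{n-1}])\varepsilon_n$; then expand both summands with the induction hypothesis and match exponents. A subset $V \subseteq \{1,\ldots,n\}$ not containing $n$ contributes $\Int^{((n-1)+(k+1)-|V|)}(R)\varepsilon_V = \Int^{(n+k-|V|)}(R)\varepsilon_V$, while a subset $V = T \cup \{n\}$ contributes $\Int^{((n-1)+k-|T|)}(R)\varepsilon_{T\cup\{n\}} = \Int^{(n+k-|V|)}(R)\varepsilon_V$, giving exactly the claimed $R$-module decomposition. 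To recover the ideal-theoretic form, I would note that $(\varepsilon_1,\ldots,\varepsilon_n)^i = \bigoplus_{|W| \ge i} R\varepsilon_W$, so the $\varepsilon_V$-component of $(\varepsilon_1,\ldots,\varepsilon_n)^i \Int^{(n+k-i)}(R)[\varepsilon_1,\ldots,\varepsilon_n]$ is $\Int^{(n+k-i)}(R)\varepsilon_V$ when $i \le |V|$ and $0$ otherwise; summing over $i$ and using the chain $\Int^{(m+1)}(R) \subseteq \Int^{(m)}(R)$ (so the term $i = |V|$ dominates) returns $\bigoplus_V \Int^{(n+k-|V|)}(R)\varepsilon_V$.

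The main obstacle is the dual-numbers lemma, and within it the bookkeeping that combines the per-$j$ conditions $g^{(j)}, g^{(j+1)}, h^{(j)} \in \Int(B)$ across $j = 0,\ldots,k$ into the single statements $g \in \Int^{(k+1)}(B)$ and $h \in \Int^{(k)}(B)$; once that is clean, the induction on $n$ and the translation between the two forms are purely formal.
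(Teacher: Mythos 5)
Your proposal is correct and follows essentially the same route as the paper's first proof: a dual-numbers lemma computing $\Int^{(k)}(B[\varepsilon]) = \Int^{(k+1)}(B) + \Int^{(k)}(B)\varepsilon$ for an arbitrary ring $B$ (the paper's Lemmas \ref{dualtotalquotientring}, \ref{dualpolynomialfunctions} and Proposition \ref{dualintegervaluedpolynomials}, stated there for $k=0$ with the general $k$ noted to follow readily), followed by induction on $n$ applying that lemma to $B = R[\varepsilon_1,\ldots,\varepsilon_{n-1}]$. The only cosmetic differences are that you carry general $k$ through the induction rather than reducing to $k=0$ first, and you spell out the translation between the direct-sum and ideal-theoretic forms, which the paper leaves implicit.
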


\begin{corollary}\label{theorem1}
Let $R$ be a ring.  Then $\Int^{(k)}(R,R[\varepsilon]) = \Int^{(k)}(R)[\varepsilon]$ and $\Int^{(k)}(R[\varepsilon]) = \Int^{(k+1)}(R) + \Int^{(k)}(R)\varepsilon$ for all nonnegative integers $k$.   Moreover, one has $\Int^{(\infty)}(R,R[\varepsilon]) =  \Int^{(\infty)}(R[\varepsilon]) = \Int^{(\infty)}(R)[\varepsilon]$.   
\end{corollary}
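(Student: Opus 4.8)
The plan is to reduce everything to the $\varepsilon$-graded decomposition of $T(R)[\varepsilon][X]$ together with Theorem \ref{hyperdualtheorem}. The first preliminary observation is that $T(R[\varepsilon]) = T(R)[\varepsilon]$: an element $a + b\varepsilon$ of $R[\varepsilon]$ is a non-zerodivisor if and only if $a \in R^{\reg}$, and in that case $(a+b\varepsilon)^{-1} = a^{-1} - a^{-2}b\varepsilon \in T(R)[\varepsilon]$, so localizing $R[\varepsilon]$ at its non-zerodivisors yields exactly $T(R)[\varepsilon] = T(R)[T]/(T^2)$. Consequently every $f \in T(R[\varepsilon])[X]$ has a unique expression $f = g + h\varepsilon$ with $g, h \in T(R)[X]$, and since $\varepsilon$ is a constant for $\frac{d}{dX}$ we have $f^{(j)} = g^{(j)} + h^{(j)}\varepsilon$ for every $j$.

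For the first identity I would evaluate on $A = R$. For $r \in R$ one has $f^{(j)}(r) = g^{(j)}(r) + h^{(j)}(r)\varepsilon$, which lies in $R[\varepsilon]$ if and only if both $g^{(j)}(r)$ and $h^{(j)}(r)$ lie in $R$. Hence $f \in \Int^{(k)}(R, R[\varepsilon])$ if and only if $g^{(j)}(R) \subseteq R$ and $h^{(j)}(R) \subseteq R$ for all $0 \le j \le k$, that is, if and only if $g, h \in \Int^{(k)}(R)$. This is precisely the assertion $\Int^{(k)}(R, R[\varepsilon]) = \Int^{(k)}(R) + \Int^{(k)}(R)\varepsilon = \Int^{(k)}(R)[\varepsilon]$.

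The second identity is the special case $n = 1$ of Theorem \ref{hyperdualtheorem}: the subsets of $\{1\}$ are $\emptyset$ and $\{1\}$, with $\varepsilon_\emptyset = 1$ and $\varepsilon_{\{1\}} = \varepsilon$, so the theorem gives $\Int^{(k)}(R[\varepsilon]) = \Int^{(k+1)}(R)\varepsilon_\emptyset \oplus \Int^{(k)}(R)\varepsilon_{\{1\}} = \Int^{(k+1)}(R) + \Int^{(k)}(R)\varepsilon$. Finally, both $\infty$-statements follow by intersecting over all $k$. Because each of the two preceding identities is a decomposition along the $\varepsilon$-grading (a direct sum of the ``real'' and ``$\varepsilon$'' parts), the intersection $\bigcap_k$ can be taken componentwise: from the first identity, $\Int^{(\infty)}(R, R[\varepsilon]) = \bigcap_k \Int^{(k)}(R)[\varepsilon] = \bigl(\bigcap_k \Int^{(k)}(R)\bigr)[\varepsilon] = \Int^{(\infty)}(R)[\varepsilon]$, and from the second, $\Int^{(\infty)}(R[\varepsilon]) = \bigl(\bigcap_k \Int^{(k+1)}(R)\bigr) \oplus \bigl(\bigcap_k \Int^{(k)}(R)\bigr)\varepsilon = \Int^{(\infty)}(R)[\varepsilon]$, where the shift from $k$ to $k+1$ is harmless since $\bigcap_k \Int^{(k+1)}(R) = \bigcap_k \Int^{(k)}(R)$.

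I expect no genuine obstacle, as the corollary is largely bookkeeping built on Theorem \ref{hyperdualtheorem}; the one point demanding care is the identification $T(R[\varepsilon]) = T(R)[\varepsilon]$ together with the verification that both differentiation and the infinite intersection respect the $\varepsilon$-grading, since these are what permit the ``real'' and ``$\varepsilon$'' parts to be separated cleanly. The mild subtlety in the $\infty$-case is that $\Int^{(\infty)}(R, R[\varepsilon])$ and $\Int^{(\infty)}(R[\varepsilon])$ coincide even though their finite-level analogues differ by the index shift, which disappears in the limit.
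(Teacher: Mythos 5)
Your proposal is correct and follows essentially the same route as the paper: the second identity is the $n=1$ case of Theorem \ref{hyperdualtheorem}, your direct evaluation argument for the first identity is precisely the $n=1$ case of Theorem \ref{hyperdualtheorem2} (i.e., Lemma \ref{dualtotalquotientring} plus Lemma \ref{dualpolynomialfunctions} evaluated at elements with zero $\varepsilon$-part), and the $\infty$-statements follow, as the paper intends, by intersecting the graded decompositions over all $k$, with the index shift absorbed exactly as you note. No gaps.
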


\begin{corollary}
Let $R$ be a ring and  $n$ a positive integer.  One has
$$\Int^{(\infty)}(R[\varepsilon_1, \ldots, \varepsilon_n]) = \Int^{(\infty)}(R,R[\varepsilon_1, \ldots, \varepsilon_n]) =\Int^{(\infty)}(R)[\varepsilon_1, \ldots, \varepsilon_n].$$
\end{corollary}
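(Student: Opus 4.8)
The plan is to obtain both equalities by intersecting, over all $k \in \ZZ_{\geq 0}$, module decompositions of the form $\bigoplus_{S}(\,\cdots\,)\varepsilon_S$, using that $\Int^{(\infty)}(-) = \bigcap_{k=0}^\infty \Int^{(k)}(-)$ and that the chain $\Int^{(k)} \supseteq \Int^{(k+1)} \supseteq \cdots$ is decreasing. Throughout I would use that every element of $T(R[\varepsilon_1,\ldots,\varepsilon_n])[X]$ has a \emph{unique} expansion $f = \sum_{S \subseteq \{1,\ldots,n\}} f_S\,\varepsilon_S$ with $f_S \in T(R)[X]$; this is the identification $T(R[\varepsilon_1,\ldots,\varepsilon_n])[X] = \bigoplus_S T(R)[X]\,\varepsilon_S$ underlying the statement of Theorem~\ref{hyperdualtheorem}, and it is what lets one read off components. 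Since the expansion is unique, membership in such a direct sum is detected componentwise, and so intersection over $k$ passes to the components.

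For the equality $\Int^{(\infty)}(R[\varepsilon_1,\ldots,\varepsilon_n]) = \Int^{(\infty)}(R)[\varepsilon_1,\ldots,\varepsilon_n]$, I would simply intersect the formula of Theorem~\ref{hyperdualtheorem} over all $k$. By the previous remark,
$$\Int^{(\infty)}(R[\varepsilon_1,\ldots,\varepsilon_n]) = \bigcap_{k=0}^\infty \bigoplus_S \Int^{(n+k-|S|)}(R)\,\varepsilon_S = \bigoplus_S \Big(\bigcap_{k=0}^\infty \Int^{(n+k-|S|)}(R)\Big)\varepsilon_S.$$
For a fixed $S$, as $k$ runs over $\ZZ_{\geq 0}$ the exponent $n+k-|S|$ runs over all integers $\geq n-|S| \geq 0$; since the chain $\Int^{(m)}(R)$ is decreasing in $m$, the finitely many omitted terms with $m < n-|S|$ are supersets and do not affect the intersection, so $\bigcap_{k} \Int^{(n+k-|S|)}(R) = \bigcap_{m \geq 0}\Int^{(m)}(R) = \Int^{(\infty)}(R)$. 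Hence the right-hand side is $\bigoplus_S \Int^{(\infty)}(R)\varepsilon_S = \Int^{(\infty)}(R)[\varepsilon_1,\ldots,\varepsilon_n]$, as desired.

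For the equality $\Int^{(\infty)}(R, R[\varepsilon_1,\ldots,\varepsilon_n]) = \Int^{(\infty)}(R)[\varepsilon_1,\ldots,\varepsilon_n]$ I would argue more directly, since here one only evaluates at elements of $R$, so the $\varepsilon_S$-components never mix. Writing $f = \sum_S f_S\varepsilon_S$, for $a \in R$ and any $j$ one has $f^{(j)}(a) = \sum_S f_S^{(j)}(a)\varepsilon_S$, and by uniqueness of the expansion the condition $f^{(j)}(a) \in R[\varepsilon_1,\ldots,\varepsilon_n] = \bigoplus_S R\varepsilon_S$ holds iff $f_S^{(j)}(a) \in R$ for every $S$. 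Letting $a$ and $j$ vary, this yields the $n$-variable analogue of Corollary~\ref{theorem1}, namely $\Int^{(k)}(R, R[\varepsilon_1,\ldots,\varepsilon_n]) = \Int^{(k)}(R)[\varepsilon_1,\ldots,\varepsilon_n]$ for each finite $k$, and directly that $f \in \Int^{(\infty)}(R, R[\varepsilon_1,\ldots,\varepsilon_n])$ iff each $f_S \in \Int^{(\infty)}(R)$, which is the claimed equality. As a consistency check, the inclusion $\Int^{(\infty)}(R[\varepsilon_1,\ldots,\varepsilon_n]) \subseteq \Int^{(\infty)}(R, R[\varepsilon_1,\ldots,\varepsilon_n])$ is immediate from $R \subseteq R[\varepsilon_1,\ldots,\varepsilon_n]$.

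I expect the only real subtlety --- rather than a genuine obstacle --- to be bookkeeping: confirming that intersection commutes with the direct sum via uniqueness of the $\{\varepsilon_S\}$-expansion over $T(R)[X]$, and that passing from finite $k$ to $k=\infty$ is legitimate because the relevant chain is decreasing and its tail alone determines the intersection. All of the substantive work is already carried by Theorem~\ref{hyperdualtheorem}; the corollary is a formal consequence obtained by intersecting over $k$.
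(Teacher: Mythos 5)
Your proposal is correct and follows essentially the same route as the paper: the corollary is an immediate consequence of Theorem \ref{hyperdualtheorem} (for the outer terms) and the first display of Theorem \ref{hyperdualtheorem2} (for the middle term, which your componentwise evaluation argument reproves), obtained by intersecting over all finite $k$ and using that the decreasing chain $\Int^{(m)}(R)$ makes the shifted intersections $\bigcap_k \Int^{(n+k-|S|)}(R)$ collapse to $\Int^{(\infty)}(R)$. The bookkeeping point you flag --- that uniqueness of the $\varepsilon_S$-expansion lets the intersection pass inside the direct sum --- is exactly the justification the paper implicitly relies on.
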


Theorem \ref{hyperdualtheorem} provides an explicit realization of the rings $\Int^{(k)}(R)$ for $k \leq n$ in terms of the ring $\Int(R[\varepsilon_1,\ldots,\varepsilon_n])$.  This provides motivation for the study of integer-valued polynomials on rings with zerodivisors, or alternatively for the study of the rings $\Int^{(k)}(R)$, either from the standpoint of the other. Moreover, by \cite[Theorem 2.4 and Proposition 7.28]{ell}, the second corollary above implies that if $\Int^{(\infty)}(R)$ has the structure of an $R$-plethory---which holds if $R$ is a Krull domain, by \cite[Theorem 7.20]{ell}---then $\Int^{(\infty)}(R[\varepsilon_1, \ldots, \varepsilon_n])$ has the structure of an $R[\varepsilon_1, \ldots, \varepsilon_n]$-plethory.  In particular, $\Int^{(\infty)}(\ZZ[\varepsilon]) = \Int^{(\infty)}(\ZZ)[\varepsilon]$ has the structure of a $\ZZ[\varepsilon]$-plethory even though $\Int(\ZZ[\varepsilon])$ does not.

Let $R$ be a ring, let $r_1, \ldots, r_n \in R$, and let $$R[\rho_1, \ldots, \rho_n] = R[T_1, \ldots, T_n]/(T_1(T_1-r_1),\ldots, T_n(T_n-r_n)),$$ where $\rho_i$ is the image of $T_i$ in the given quotient ring for all $i$.  Note that $$R[\rho_1, \ldots, \rho_n] = R[\rho_1][\rho_2]\cdots[\rho_n] = R[\rho_1] \otimes_R \cdots \otimes_R R[\rho_n].$$ Let $r = r_i$ and $\rho = \rho_i$ for a fixed $i$.  The map $Q: R^2 \longrightarrow R$ given by $(x,y) \longmapsto x^2 + rxy$ is a rank two quadratic form over $R$, and the ring $R[\rho]$ is the even Clifford algebra of $Q$.  If $r$ is a unit, then $R[\rho] \cong R \times R$.   If $r = 0$, then $R[\rho] = R[T]/(T^2)$ is the ring of dual numbers over $R$.  If $r = 2$, then $R[\rho] = R[j] = R[U]/(U^2-1)$, where $j = \rho-1$ and $j^2 = 1$, is the ring of {\it split complex numbers over $R$}.   Our main result, Theorem \ref{hypertheorem} of Section 3, generalizes Theorem \ref{hyperdualtheorem} by computing $\Int(R[\rho_1, \ldots, \rho_n])$.

In Section 4, we generalize Corollary \ref{theorem1} by computing $\Int^{(k)}(R(+)M)$ for any $R$-module $M$ satisfying a certain regularity condition, where $R(+)M$ denotes the (Nagata) idealization of $M$ over $R$.    (Note that $R[\varepsilon] \cong R(+)R$.)   This provides motivation for a generalization of integer-valued polynomials to modules.  The final section, Section 5, connects such a generalization to the notion of polynomial torsion in modules.

\section{Hyper-dual-integer-valued  polynomials}

Let $R$ be a ring.  The ring $R[\varepsilon] = R[T]/(T^2)$, where $\varepsilon$ denotes the image of $T$ in the given quotient ring, is called the {\it ring of dual numbers over $R$}.  Its elements are expressions of the form $x+y\varepsilon$ with $x,y \in R$, where $\varepsilon^2 = 0$.  The ring $R$ is a subring of $R[\varepsilon]$ under the identification $x = x + 0\varepsilon$ for all $x \in R$.   For all $z = x+y\varepsilon \in R[\varepsilon]$, one defines the {\it conjugate} of $z$ to be $\overline{z} = x-y\varepsilon$  and the {\it modulus} of $z$ to be  $|z| = x$.  Then one has $z \overline{z} = |z|^2$, as well as $|zw| = |z||w|$ and $\overline{zw} = \overline{z} \, \overline{w}$, for all $z,w \in R[\varepsilon]$, in analogy with the complex numbers.

Let $n$ a positive integer.  Recall from the introduction that we let $$R[\varepsilon_1, \ldots, \varepsilon_n] = R[T_1,\ldots, T_n]/(T_1^2, \ldots, T_n^2),$$ where $\varepsilon_i$ for all $i$ is the image of $T_i$ in the given quotient ring.    Note that $R[\varepsilon_1, \ldots, \varepsilon_n] = R[\varepsilon_1,\ldots,\varepsilon_{n-1}][\varepsilon]$, where $\varepsilon_n = \varepsilon$. 

 In this section we provide two proofs of Theorem \ref{hyperdualtheorem}, which computes $\Int^{(k)}(R[\varepsilon_1, \ldots, \varepsilon_n])$ for all positive integers $n$ and all nonnegative integers $k$.   Our first proof is by induction on $n$.   We first prove the base case $n = 1$ by computing $\Int(R[\varepsilon])$.  To this end we provide the following three elementary lemmas.

\begin{lemma}
Let $R$ be a ring and $z \in R[\varepsilon]$.  Then $z$ is a non-zerodivisor (resp., unit) of $R[\varepsilon]$ if and only if $|z|$ is a non-zerodivisor (resp., unit) of $R$.
\end{lemma}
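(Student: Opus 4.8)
The plan is to write $z = x + y\varepsilon$ with $x, y \in R$, so that $|z| = x$, and to treat the unit and non-zerodivisor assertions separately, in each case exploiting the explicit multiplication $(x+y\varepsilon)(a+b\varepsilon) = xa + (xb+ya)\varepsilon$ in $R[\varepsilon]$.

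For the unit assertion I would use the modulus identity $z\overline{z} = |z|^2 = x^2$ recorded above. If $z$ is a unit with inverse $w = a + b\varepsilon$, then comparing constant terms in $zw = 1$ gives $xa = 1$, so $x = |z|$ is a unit of $R$. Conversely, if $x$ is a unit of $R$, then $x^2$ is a unit, and the identity $z\overline{z} = x^2$ exhibits $x^{-2}\overline{z}$ as a two-sided inverse of $z$, so $z$ is a unit of $R[\varepsilon]$.

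For the non-zerodivisor assertion I would argue by direct coefficient comparison. Suppose first that $z$ is a non-zerodivisor; if $x$ were a zerodivisor of $R$, say $xc = 0$ with $c \neq 0$, then $z(c\varepsilon) = xc\,\varepsilon = 0$ with $c\varepsilon \neq 0$, contradicting that $z$ is a non-zerodivisor. Hence $x$ is a non-zerodivisor of $R$. Conversely, suppose $x$ is a non-zerodivisor and $zw = 0$ for some $w = a + b\varepsilon$. Then $xa = 0$ and $xb + ya = 0$; the first equation forces $a = 0$, whereupon the second reduces to $xb = 0$ and forces $b = 0$, so $w = 0$ and $z$ is a non-zerodivisor.

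There is no serious obstacle here: the statement reduces entirely to the bilinear structure of $R[\varepsilon]$ over $R$. The only points requiring minor care are the order of deductions in the backward non-zerodivisor direction---one must extract $a = 0$ from the constant coefficient before concluding $b = 0$ from the $\varepsilon$-coefficient---and the observation that the conjugate--modulus identity already makes the unit case essentially immediate.
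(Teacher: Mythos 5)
Your proof is correct: the paper states this lemma without proof (it is one of the ``three elementary lemmas''), and your argument---coefficient comparison in the product $(x+y\varepsilon)(a+b\varepsilon) = xa + (xb+ya)\varepsilon$ for the non-zerodivisor case, together with the identity $z\overline{z} = |z|^2$ for the unit case---is precisely the routine verification the paper leaves to the reader. All four implications are handled soundly, including the order-of-deduction point ($xa=0$ forces $a=0$ before the $\varepsilon$-coefficient yields $b=0$) and the witness $z(c\varepsilon)=0$ showing a zerodivisor modulus makes $z$ a zerodivisor.
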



\begin{lemma}\label{dualtotalquotientring}
Let $R$ be a ring.  Then the total quotient ring $T(R[\varepsilon])$ of $R[\varepsilon]$ is given by $T(R)[\varepsilon]$, where $T(R)$ is the total quotient ring of $R$.
\end{lemma}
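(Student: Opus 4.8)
The plan is to compute the total quotient ring $T(R[\varepsilon]) = U^{-1}(R[\varepsilon])$ directly from its definition, where $U = (R[\varepsilon])^{\reg}$ denotes the set of all non-zerodivisors of $R[\varepsilon]$, and to show that this localization coincides with $T(R)[\varepsilon]$. The first step is to describe $U$ explicitly: by the preceding lemma, $z = x + y\varepsilon \in R[\varepsilon]$ is a non-zerodivisor if and only if $|z| = x \in R^{\reg}$. In particular, writing $V = R^{\reg}$ and identifying $s \in V$ with $s + 0\varepsilon \in R[\varepsilon]$, we obtain a containment of multiplicative sets $V \subseteq U$.

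The key step is then to show that $U^{-1}(R[\varepsilon]) = V^{-1}(R[\varepsilon])$ and that the latter equals $T(R)[\varepsilon]$. For the second equality I would use that localization commutes with the dual-number construction: since $R[\varepsilon] = R[T]/(T^2)$ and localization commutes with polynomial extensions and with quotients, one has $V^{-1}(R[\varepsilon]) = (V^{-1}R)[T]/(T^2) = T(R)[\varepsilon]$. For the first equality I would invoke the standard fact that if $V \subseteq U$ are multiplicative sets and every element of $U$ is already a unit in $V^{-1}(R[\varepsilon])$, then the two localizations agree. So it remains to check that each $u \in U$ is a unit of $T(R)[\varepsilon]$. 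Here the preceding lemma, applied now to the ring $T(R)$ in place of $R$, does the work: $u \in T(R)[\varepsilon]$ is a unit if and only if $|u|$ is a unit of $T(R)$, and by the description of $U$ above, $|u| \in R^{\reg}$, which is indeed a unit of $T(R)$. Hence every element of $U$ is a unit of $V^{-1}(R[\varepsilon])$, and the two localizations coincide, giving $T(R[\varepsilon]) = U^{-1}(R[\varepsilon]) = V^{-1}(R[\varepsilon]) = T(R)[\varepsilon]$.

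I do not expect a genuine obstacle: the argument is a routine application of localization theory together with the explicit description of non-zerodivisors from the preceding lemma. The one point requiring mild care is the reduction from localizing at all of $U$ to localizing at the smaller set $V$, since the total quotient ring is by definition the localization at the full set of non-zerodivisors; the crucial input making this reduction painless is precisely the characterization of units of $T(R)[\varepsilon]$ via the modulus. As an alternative to the universal-property argument for the first equality, one can verify surjectivity concretely: given $a + b\varepsilon \in T(R)[\varepsilon]$, writing $a = p/s$ and $b = q/s$ over a common denominator $s \in R^{\reg}$ exhibits it as $(p + q\varepsilon)/s$ with numerator in $R[\varepsilon]$ and $s \in V \subseteq U$, while injectivity of $R[\varepsilon] \longrightarrow T(R)[\varepsilon]$ and the absence of $U$-torsion follow from the fact that the elements of $V \subseteq U$ are non-zerodivisors.
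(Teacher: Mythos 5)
Your proof is correct, and it rests on the same two ingredients as the paper's: the modulus characterization of non-zerodivisors and units (the lemma immediately preceding this one), and the observation that elements of $T(R)[\varepsilon]$ admit a common denominator in $R^{\reg}$. The difference is one of packaging. The paper argues by double inclusion, concretely inside $T(R)[\varepsilon]$: non-zerodivisors of $R[\varepsilon]$ become units of $T(R)[\varepsilon]$, so $T(R[\varepsilon])$ embeds there; conversely, any $w = xu^{-1} + yv^{-1}\varepsilon$ is rewritten explicitly as $(vx + uy\varepsilon)(uv)^{-1}$ with $uv \in R^{\reg} \subseteq (R[\varepsilon])^{\reg}$, giving the reverse inclusion. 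You instead reduce the localization at $U = (R[\varepsilon])^{\reg}$ to the localization at the smaller set $V = R^{\reg}$ via the universal property, and identify $V^{-1}(R[\varepsilon])$ with $T(R)[\varepsilon]$ by exactness of localization (it commutes with passing to $R[T]$ and with the quotient by $(T^2)$). Your route is slightly more abstract but isolates the only ring-specific input --- the modulus criterion --- and for that reason transfers verbatim to $T(R)[\varepsilon_1,\ldots,\varepsilon_n]$ (Lemma \ref{hyperdualtotalquotientring}, which the paper states without proof) and to the idealization $R(+)M$ in Section 4; the paper's version has the virtue of being self-contained and of producing the explicit common-denominator formula. Both halves of your argument, including the appeal to the preceding lemma over the ring $T(R)$ in place of $R$, are sound.
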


\begin{proof}
If $z = x+y\varepsilon$ is a non-zerodivisor of $R[\varepsilon]$, then $x$ is a non-zerodivisor of $R$, whence $x$ is a unit of $T(R)$ and therefore $z = x+y\varepsilon$ is a unit of $T(R)[\varepsilon]$.  Therefore every non-zerodivisor of $R[\varepsilon]$ is a unit of $T(R)[\varepsilon]$.  Thus $T(R[\varepsilon])$ is a subring of $T(R)[\varepsilon]$.  Let $w \in T(R)[\varepsilon]$, so $w = xu^{-1} +y v^{-1} \varepsilon$, where $x, y, u, v \in R$ and $u$ and $v$ are non-zerodivisors of $R$.  Then $w = (vx+uy\varepsilon)(uv)^{-1}$, where $vx + uy \in R[\varepsilon]$, and where $uv$ is a non-zerodivisor of $R$, hence a non-zerodivisor of $R[\varepsilon]$.  Therefore $w \in T(R[\varepsilon])$.  The lemma follows.
\end{proof}

\begin{lemma}\label{dualpolynomialfunctions}
Let $R$ be a ring.  Every $F \in R[\varepsilon][X]$ can be written uniquely in the form $F = f + g\varepsilon$, where $f , g \in R[X]$.  Moreover, one has
$$F(x + y \varepsilon) = f(x) + (f'(x)y+g(x)) \varepsilon$$
for all $x, y \in R$.
\end{lemma}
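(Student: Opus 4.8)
The plan is to treat the two assertions separately, the first being a purely structural observation and the second a short computation enabled by the relation $\varepsilon^2 = 0$.

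For the existence and uniqueness of the decomposition $F = f + g\varepsilon$, I would note that $R[\varepsilon] = R[T]/(T^2)$ is free of rank two as an $R$-module with basis $\{1, \varepsilon\}$, so that $R[\varepsilon][X]$ is free as an $R[X]$-module with that same basis. Concretely, writing $F = \sum_i a_i X^i$ and each coefficient as $a_i = b_i + c_i\varepsilon$ with $b_i, c_i \in R$, and collecting the coefficients of $1$ and of $\varepsilon$, yields $f = \sum_i b_i X^i$ and $g = \sum_i c_i X^i$; this assignment is manifestly a bijection onto pairs $(f,g) \in R[X]^2$, which gives both existence and uniqueness.

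For the evaluation formula I would use that the map $R[\varepsilon][X] \longrightarrow R[\varepsilon]$, $F \longmapsto F(x+y\varepsilon)$, is an $R[\varepsilon]$-algebra homomorphism, hence in particular additive; so it suffices to verify the identity on monomials $F = aX^i$ with $a = b + c\varepsilon \in R[\varepsilon]$, for which $f = bX^i$ and $g = cX^i$. The one computation that matters is the binomial expansion of $(x+y\varepsilon)^i$: since $\varepsilon^2 = 0$, every term involving $(y\varepsilon)^j$ with $j \geq 2$ vanishes, leaving $(x+y\varepsilon)^i = x^i + i x^{i-1} y \varepsilon$. Multiplying by $a = b + c\varepsilon$ and again discarding $\varepsilon^2$ gives $F(x+y\varepsilon) = b x^i + (c x^i + i b x^{i-1} y)\varepsilon$, which is exactly $f(x) + (f'(x)y + g(x))\varepsilon$ since $f'(x) = i b x^{i-1}$ and $g(x) = c x^i$.

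The only point requiring any care is that this truncated Taylor expansion is valid over an \emph{arbitrary} commutative ring, i.e.\ that no division by factorials sneaks in; but this is precisely what $\varepsilon^2 = 0$ guarantees, since only the linear term survives and its coefficient $\binom{i}{1} = i$ is an integer. Thus I expect no genuine obstacle: the content of the lemma lies entirely in recognizing that substituting an increment that is nilpotent of order two reproduces the formal derivative. Equivalently, one could argue directly for general $F = f + g\varepsilon$ by writing $F(x+y\varepsilon) = f(x+y\varepsilon) + g(x+y\varepsilon)\varepsilon$, observing that $g(x+y\varepsilon)\varepsilon = g(x)\varepsilon$ because the $\varepsilon$-part of $g(x+y\varepsilon)$ is annihilated, and expanding $f(x+y\varepsilon) = f(x) + f'(x)y\varepsilon$ as above.
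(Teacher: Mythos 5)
Your proof is correct and takes essentially the same approach as the paper: the key step in both is the binomial expansion $(x+y\varepsilon)^k = x^k + kx^{k-1}y\varepsilon$ forced by $\varepsilon^2 = 0$, verified on monomials and extended by linearity. In fact, your closing ``equivalently'' paragraph---splitting $F = f + g\varepsilon$, noting $g(x+y\varepsilon)\varepsilon = g(x)\varepsilon$, and expanding $f(x+y\varepsilon) = f(x) + f'(x)y\varepsilon$---is precisely the argument the paper gives.
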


\begin{proof}
We may write $F  = \sum_{k = 0}^n (f_k + g_k \varepsilon) X^k = f+ g \varepsilon$, where $f = \sum_{k = 0}^n f_k X^k \in R[X]$ and $g = \sum_{k = 0}^n g_k X^k \in R[X]$, and clearly $f$ and $g$ are unique.  Let $k$ be a positive integer.  By the binomial theorem one has $(x+y\varepsilon)^k = x^k + kx^{k-1} y \varepsilon.$
In other words, if $h = X^k$, then one has $h(x+y \varepsilon) = h(x)+h'(x)y\varepsilon$.  Therefore, extending this by linearity we see that $f(a+b \varepsilon) = f(a)+f'(a)b\varepsilon$ and $g(x+y \varepsilon) = g(x)+g'(x)y\varepsilon$.  Therefore we have
$F(x + y \varepsilon) = f(x+y\varepsilon) + (g(x+y\varepsilon)) \varepsilon  = f(x) + (f'(x)y+g(x)) \varepsilon,$  as claimed.
\end{proof}

\begin{proposition}[with Ryan Hoffman, Cybill Kreil, and Alexander McBroom]\label{dualintegervaluedpolynomials}
One has
$$\Int(R[\varepsilon]) = \Int^{(1)}(R) + \Int(R)\varepsilon$$
for any ring $R$.
\end{proposition}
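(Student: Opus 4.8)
The plan is to reduce the computation of $\Int(R[\varepsilon])$ to a coefficient-wise condition by combining the two preceding lemmas. First I would invoke Lemma \ref{dualtotalquotientring} to write $T(R[\varepsilon]) = T(R)[\varepsilon]$, so that $\Int(R[\varepsilon])$ consists precisely of those $F \in T(R)[\varepsilon][X]$ satisfying $F(x+y\varepsilon) \in R[\varepsilon]$ for all $x, y \in R$, since the general element of $R[\varepsilon]$ is $x + y\varepsilon$ with $x, y \in R$. Next I would observe that the decomposition and the evaluation formula of Lemma \ref{dualpolynomialfunctions} go through verbatim with $R$ replaced by $T(R)$: every such $F$ is uniquely of the form $F = f + g\varepsilon$ with $f, g \in T(R)[X]$, and $F(x+y\varepsilon) = f(x) + (f'(x)y + g(x))\varepsilon$. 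This is legitimate because the proof of that lemma used only the binomial theorem and linearity, which hold over any ring.

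With this in hand, membership $F \in \Int(R[\varepsilon])$ becomes the pair of conditions that $f(x) \in R$ and $f'(x)y + g(x) \in R$ hold for all $x, y \in R$. The first condition is exactly $f \in \Int(R)$. The key maneuver for the second is to exploit the free parameter $y$: specializing $y = 0$ forces $g \in \Int(R)$, and then, given $g \in \Int(R)$, specializing $y = 1$ forces $f' \in \Int(R)$, that is, $f \in \Int^{(1)}(R)$. Conversely, if $f \in \Int^{(1)}(R)$ and $g \in \Int(R)$, then $f'(x)y + g(x) \in R$ holds automatically for all $x, y \in R$, so $F \in \Int(R[\varepsilon])$.

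Finally, since the decomposition $F = f + g\varepsilon$ is unique, these equivalences identify $\Int(R[\varepsilon])$ with the set of $f + g\varepsilon$ for which $f \in \Int^{(1)}(R)$ and $g \in \Int(R)$, which is exactly $\Int^{(1)}(R) + \Int(R)\varepsilon$. I do not anticipate a serious obstacle; the only point requiring care is that one must range over \emph{all} $x, y \in R$ and recognize that the term $f'(x)y$ is linear in the independent parameter $y$, so that specializing $y$ cleanly decouples the single congruence $f'(x)y + g(x) \in R$ into the separate conditions $g \in \Int(R)$ and $f' \in \Int(R)$. Verifying that the evaluation formula survives the passage to coefficients in $T(R)$ is routine.
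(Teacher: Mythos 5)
Your proposal is correct and follows essentially the same route as the paper: both use Lemma \ref{dualtotalquotientring} to identify $T(R[\varepsilon])$ with $T(R)[\varepsilon]$, then apply the evaluation formula of Lemma \ref{dualpolynomialfunctions} (over $T(R)$) to reduce membership in $\Int(R[\varepsilon])$ to the conditions $f, f', g \in \Int(R)$. The only difference is cosmetic: the paper compresses your explicit $y=0$ and $y=1$ specializations into a single ``if and only if'' step.
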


\begin{proof}
By Lemma \ref{dualtotalquotientring}, the total quotient ring of $R[\varepsilon]$ is $K[\varepsilon]$, where $K$ is the total quotient ring of $R$.  Let $F \in K[\varepsilon][X]$.  By Lemma \ref{dualpolynomialfunctions}, we may write $F = f + g \varepsilon$ with $f, g \in K[X]$, and one has $F(x+y \varepsilon) = f(x) + (f'(x)y+g(x)) \varepsilon$
for all $x,y \in K$.  Therefore, $F$ lies in $\Int(R[\varepsilon])$ if and only if both $f(x)$ and $f'(x)y + g(x)$ lie in $R$ for all $x,y \in R$, if and only if $f(x)$, $f'(x)$, and $g(x)$ lie in $R$ for all $x \in R$, if and only if $f \in \Int^{(1)}(R)$ and $g \in \Int(R)$.  Thus, $F \in K[\varepsilon][X] = T(R[\varepsilon])[X]$ lies in $\Int(R[\varepsilon])$ if and only if $F$ is of the form $f+g \varepsilon$ with $f \in \Int^{(1)}(R)$ and $g \in \Int(R)$.  The proposition  follows.
\end{proof}

As an $R$-module, the ring $R[\varepsilon_1, \ldots, \varepsilon_n]$ is free of rank $2^n$, with a basis consisting of the elements $\varepsilon_S = \prod_{i \in S} \varepsilon_i$ for all subsets $S$ of the set $\{1,2,\ldots, n\}$. 

\begin{proof}[{Proof of Theorem \ref{hyperdualtheorem}}]
The statement for arbitrary $k$ follows readily from the statement for $k = 0$, so it suffices to prove the statement for $k = 0$.  The proof is by induction on $n$.  The statement for $n = 1$ is Proposition \ref{dualintegervaluedpolynomials}.  Suppose the statement holds for $n-1$ for some $n >1$.  Then we have
\begin{align*}
\Int(R[\varepsilon_1, \ldots, \varepsilon_n] ) & = \Int(R[\varepsilon_1,\ldots,\varepsilon_{n-1}][\varepsilon_n]) \\
&  = \Int^{(1)}(R[\varepsilon_1,\ldots,\varepsilon_{n-1}]) + \Int(R[\varepsilon_1,\ldots,\varepsilon_{n-1}]) \varepsilon_n \\
& = \bigoplus_{S \subseteq \{1,2,\ldots,n-1\}}  \Int^{(n-1+1-|S|)}(R) \varepsilon_S + \left(\bigoplus_{S \subseteq \{1,2,\ldots,n-1\}}  \Int^{(n-1-|S|)}(R) \varepsilon_S \right) \varepsilon_n  \\
& = \bigoplus_{n \notin S \subseteq \{1,2,\ldots,n\}}  \Int^{(n-|S|)}(R) \varepsilon_S + \bigoplus_{n \in S \subseteq \{1,2,\ldots,n\}}  \Int^{(n-|S|)}(R) \varepsilon_S  \\ 
& = \bigoplus_{S \subseteq \{1,2,\ldots,n\}}  \Int^{(n-|S|)}(R) \varepsilon_S.
\end{align*}
This completes the proof.
\end{proof}

We now provide a more direct proof of a stronger version of Theorem \ref{hyperdualtheorem} that avoids induction on $n$.

\begin{lemma}\label{hyperdualtotalquotientring}
Let $R$ be a ring and $n$ a positive integer.  Then the total quotient ring $T(R[\varepsilon_1, \ldots, \varepsilon_n])$ of $R[\varepsilon_1, \ldots, \varepsilon_n]$ is given by $T(R)[\varepsilon_1, \ldots, \varepsilon_n]$.
\end{lemma}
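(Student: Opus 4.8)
The plan is to mimic the proof of Lemma \ref{dualtotalquotientring} but to replace its one-line non-zerodivisor computation with a grading argument, so that the full statement follows directly, without induction on $n$. The key preliminary step is to characterize the non-zerodivisors of $R[\varepsilon_1,\ldots,\varepsilon_n]$: I claim that $z = \sum_S a_S \varepsilon_S$ (with $a_S \in R$) is a non-zerodivisor of $R[\varepsilon_1,\ldots,\varepsilon_n]$ if and only if its constant term $a_\emptyset$ is a non-zerodivisor of $R$. Since the defining relations $T_i^2$ are homogeneous of degree $2$, the ring $R[\varepsilon_1,\ldots,\varepsilon_n]$ is graded by total degree, with degree-$d$ part free on the basis elements $\varepsilon_S$ with $|S| = d$; this grading is what drives the argument.

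First I would prove the characterization. For the nontrivial direction, suppose $a_\emptyset$ is a non-zerodivisor of $R$ and that $zw = 0$ for some $w = \sum_S b_S \varepsilon_S$. Writing $z = \sum_d z_d$ and $w = \sum_e w_e$ in homogeneous components and comparing graded pieces of $zw = 0$, the degree-$0$ part gives $a_\emptyset b_\emptyset = 0$, hence $b_\emptyset = 0$; inducting on the degree $m$, the degree-$m$ part collapses (once $w_0 = \cdots = w_{m-1} = 0$) to $a_\emptyset w_m = 0$, which forces $b_S = 0$ for every $|S| = m$ because the $\varepsilon_S$ form part of an $R$-basis. Thus $w = 0$ and $z$ is a non-zerodivisor. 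For the converse, if $a_\emptyset b = 0$ with $b \neq 0$ in $R$, then $w = b\,\varepsilon_{\{1,\ldots,n\}}$ is a nonzero element annihilated by $z$, since every $\varepsilon_S$ with $S \neq \emptyset$ kills the top class $\varepsilon_{\{1,\ldots,n\}}$; so $z$ is a zerodivisor. (The analogous statement for units follows at once, as $a_\emptyset$ is the image of $z$ under the quotient by the nilpotent ideal $(\varepsilon_1,\ldots,\varepsilon_n)$.)

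With the characterization in hand, the computation of $T(R[\varepsilon_1,\ldots,\varepsilon_n])$ proceeds exactly as for $n=1$. If $z$ is a non-zerodivisor of $R[\varepsilon_1,\ldots,\varepsilon_n]$, then $a_\emptyset$ is a non-zerodivisor of $R$, hence a unit of $T(R)$, and since $z - a_\emptyset$ is nilpotent, $z$ is a unit of $T(R)[\varepsilon_1,\ldots,\varepsilon_n]$; this exhibits $T(R[\varepsilon_1,\ldots,\varepsilon_n])$ as a subring of $T(R)[\varepsilon_1,\ldots,\varepsilon_n]$. Conversely, given $w = \sum_S c_S \varepsilon_S$ with each $c_S \in T(R)$, I would clear denominators: choosing a single non-zerodivisor $q \in R$ with $q c_S \in R$ for all $S$ (for instance the product of the denominators of the $c_S$), one has $qw \in R[\varepsilon_1,\ldots,\varepsilon_n]$, while $q$ is a non-zerodivisor of $R[\varepsilon_1,\ldots,\varepsilon_n]$ by the characterization, so $w = (qw)\,q^{-1} \in T(R[\varepsilon_1,\ldots,\varepsilon_n])$. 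The two inclusions give the claimed equality.

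The main obstacle is the non-zerodivisor characterization, and specifically the direction asserting that a non-zerodivisor constant term makes the whole element a non-zerodivisor: unlike the $n=1$ case, where this is a short computation in $x + y\varepsilon$, here one must exploit the total-degree grading and argue by induction on degree. Everything after that is a routine adaptation of the proof of Lemma \ref{dualtotalquotientring}.
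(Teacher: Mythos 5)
Your proof is correct; since the paper states this lemma with no proof at all, what you have done is fill a genuine gap rather than parallel an existing argument. The route the paper implicitly has available is induction on $n$: writing $R[\varepsilon_1,\ldots,\varepsilon_n] = R[\varepsilon_1,\ldots,\varepsilon_{n-1}][\varepsilon_n]$ and applying the $n=1$ case ($T(A[\varepsilon]) = T(A)[\varepsilon]$, Lemma \ref{dualtotalquotientring}, whose proof the paper does give) to $A = R[\varepsilon_1,\ldots,\varepsilon_{n-1}]$, the inductive hypothesis immediately yields $T(R[\varepsilon_1,\ldots,\varepsilon_n]) = T(R)[\varepsilon_1,\ldots,\varepsilon_{n-1}][\varepsilon_n] = T(R)[\varepsilon_1,\ldots,\varepsilon_n]$; that is a two-line deduction from what is already proved. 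You instead generalize the $n=1$ proof directly: your characterization of the non-zerodivisors of $R[\varepsilon_1,\ldots,\varepsilon_n]$ as the elements whose constant term $a_\emptyset$ is a non-zerodivisor of $R$ is the $n$-variable analogue of the paper's first (unnumbered) lemma of Section 2, and your grading induction proving it is sound: once $w_0 = \cdots = w_{m-1} = 0$, the degree-$m$ component of $zw$ does collapse to $a_\emptyset w_m$, and freeness of the degree-$m$ part on the $\varepsilon_S$ with $|S| = m$ forces $w_m = 0$; the converse via $w = b\,\varepsilon_{\{1,\ldots,n\}}$ is also right. The two inclusions that follow (a non-zerodivisor is a unit of $T(R)[\varepsilon_1,\ldots,\varepsilon_n]$ because it is a unit constant term plus a nilpotent, and denominators can be cleared by a single non-zerodivisor of $R$) track the paper's $n=1$ proof exactly. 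What your approach buys: it is self-contained, avoids induction on $n$ (in keeping with the stated aim of the second half of Section 2), and the non-zerodivisor characterization it establishes is of independent interest. What the inductive route buys: brevity, given that Lemma \ref{dualtotalquotientring} is already proved. Either would be an acceptable proof to attach to the lemma.
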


\begin{lemma}[with Ryan Hoffman, Cybill Kreil, and Alexander McBroom]\label{hyperdualpolynomialfunctions}
Let $R$ be a ring and $n$ a positive integer.  One has $$R[\varepsilon_1, \ldots, \varepsilon_n][X] = R[X][\varepsilon_1, \ldots, \varepsilon_n],$$ so every $F \in R[\varepsilon_1, \ldots, \varepsilon_n][X]$ can be written uniquely in the form $F = \sum_{S \subseteq \{1,2,\ldots,n\}} f_S \varepsilon_S$, where $f_S \in R[X]$ for all $S \subseteq  \{1,2,\ldots,n\}$.  Moreover, one has
\begin{align} F(x + x_1 \varepsilon_1 + x_2 \varepsilon_2 + \cdots + x_n \varepsilon_n) & = \sum_{S \subseteq \{1,2,\ldots,n\}}  F^{(|S|)}(x) \prod_{i \in S} x_i \varepsilon_i  \label{eqc} \\ & = \sum_{S \subseteq \{1,2,\ldots,n\}}  \left( \sum_{T \subseteq S} f_{S-T}^{(|T|)}(x) \prod_{i \in T} x_i \right) \varepsilon_S  \label{eqd}
\end{align}
for all $x, x_1, x_2, \ldots, x_n \in R$.  More generally, for all $\xx = \sum_{S \subseteq \{1,2,\ldots,n\}} x_S \varepsilon_S$, one has
\begin{align} F(\xx)  &= \sum_{S \subseteq \{1,2,\ldots,n\}} \left(\sum_{{\textup{partition }} \atop {S_1, \ldots, S_k {\textup{ of } S}}} F^{(k)}(x_\emptyset)\prod_{i = 1}^k x_{S_i} \right) \varepsilon_S \label{eqa}  \\
 & = \sum_{S \subseteq \{1,2,\ldots,n\}} \left(\sum_{T \subseteq S} \sum_{{\textup{partition }} \atop {T_1, \ldots, T_k {\textup{ of } T}}} f_{S-T}^{(k)}(x_\emptyset)\prod_{i = 1}^k x_{T_i} \right) \varepsilon_S. \label{eqb}
\end{align}
\end{lemma}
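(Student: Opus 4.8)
The plan is to prove Lemma \ref{hyperdualpolynomialfunctions} by establishing the single-variable-type formula \eqref{eqc} first, from which the other three follow by algebraic manipulation. The identity $R[\varepsilon_1,\ldots,\varepsilon_n][X] = R[X][\varepsilon_1,\ldots,\varepsilon_n]$ and the unique expansion $F = \sum_S f_S \varepsilon_S$ are immediate from freeness of $R[\varepsilon_1,\ldots,\varepsilon_n]$ as an $R$-module, just as in Lemma \ref{dualpolynomialfunctions}; so the real content is the four evaluation formulas.

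First I would prove \eqref{eqc}. The key observation, generalizing the computation $(x+y\varepsilon)^k = x^k + kx^{k-1}y\varepsilon$ from the proof of Lemma \ref{dualpolynomialfunctions}, is that for a single monomial $X^m$ the multinomial expansion of $(x + x_1\varepsilon_1 + \cdots + x_n\varepsilon_n)^m$ collapses dramatically because $\varepsilon_i^2 = 0$ forces each $\varepsilon_i$ to appear at most once. Thus the only surviving terms are those in which a distinct subset $S \subseteq \{1,\ldots,n\}$ of the variables is chosen to contribute one factor each; the multinomial coefficient for choosing $|S|$ distinguished factors out of $m$ and filling the rest with $x$ is exactly $\frac{m!}{(m-|S|)!} = m(m-1)\cdots(m-|S|+1)$, which is precisely the $|S|$-fold falling factorial realizing the $|S|$-th formal derivative of $X^m$. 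This gives $(x + \sum_i x_i\varepsilon_i)^m = \sum_S (X^m)^{(|S|)}(x) \prod_{i \in S} x_i \varepsilon_i$, and extending by $R$-linearity in the coefficients yields \eqref{eqc} for arbitrary $F \in R[X][\varepsilon_1,\ldots,\varepsilon_n]$, where $F^{(k)}$ denotes the coefficientwise formal derivative in $X$.

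Next I would derive \eqref{eqd} from \eqref{eqc} by substituting $F = \sum_U f_U \varepsilon_U$, applying the derivative termwise so that $F^{(|S|)}(x) = \sum_U f_U^{(|S|)}(x)\varepsilon_U$, and then collecting the coefficient of each basis element $\varepsilon_S$: a term $f_U^{(|T|)}(x)\varepsilon_U \prod_{i \in T} x_i\varepsilon_i$ contributes to $\varepsilon_S$ exactly when $U$ and $T$ are disjoint with $U \cup T = S$, i.e. when $U = S - T$ for some $T \subseteq S$. Reindexing gives \eqref{eqd}. For the general formulas \eqref{eqa} and \eqref{eqb} with $\xx = \sum_S x_S \varepsilon_S$, the same nilpotency principle applies, but now the higher-degree pieces $x_S\varepsilon_S$ (with $|S|\ge 2$) can themselves carry several $\varepsilon_i$ at once; expanding $F(\xx)$ and tracking which combinations of factors produce a given monomial $\varepsilon_S$ amounts to choosing an ordered selection of pieces $x_{S_1},\ldots,x_{S_k}$ whose index sets are pairwise disjoint and partition $S$, weighted by $F^{(k)}(x_\emptyset)$ since $k$ nonconstant factors are selected from the Taylor-type expansion about the constant part $x_\emptyset$. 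Formula \eqref{eqb} then follows from \eqref{eqa} by the same $U = S - T$ reindexing used for \eqref{eqd}.

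I expect the main obstacle to be the bookkeeping in \eqref{eqa}: one must verify that the natural ``Fa\`a di Bruno''-style combinatorics of evaluating a polynomial at an element of a truncated multivariable algebra is governed precisely by \emph{set partitions} rather than by multinomial or unordered data. The cleanest route is probably to first prove \eqref{eqc} carefully as the base identity and then obtain \eqref{eqa} by induction on the number of nonzero higher components of $\xx$, or by writing $\xx = x_\emptyset + \yy$ with $\yy$ nilpotent and expanding $F(x_\emptyset + \yy) = \sum_k \frac{1}{k!}F^{(k)}(x_\emptyset)\yy^k$ formally (valid since $\yy$ is nilpotent, so the sum terminates), at which point $\frac{1}{k!}\yy^k = \frac{1}{k!}\bigl(\sum_{S\ne\emptyset} x_S\varepsilon_S\bigr)^k$ and the coefficient of $\varepsilon_S$ counts ordered tuples $(S_1,\ldots,S_k)$ of nonempty disjoint sets covering $S$; the $\frac{1}{k!}$ exactly cancels the overcounting that distinguishes ordered partitions from set partitions, leaving the sum over unordered partitions of $S$. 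Care is needed to keep this derivation free of division (since $R$ is an arbitrary ring), which is why I would ultimately prefer the direct combinatorial collection of terms over the formal Taylor argument, using the latter only as heuristic guidance.
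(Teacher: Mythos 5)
Your proposal is correct, and its combinatorial engine is the same as the paper's: expand multinomially, use $\varepsilon_i^2=0$ to discard every exponent vector that is not squarefree, and identify the surviving terms contributing to $\varepsilon_S$ with the set partitions of $S$. The genuine difference is how the derivative coefficients are produced. The paper proves Eq.~\eqref{eqa} directly, for all $\xx$ at once, by writing $F(\xx)=\sum_{k}F^{(k)}(x)\tfrac{1}{k!}(\xx-x)^{k}$ (Taylor's theorem) followed by the multinomial theorem, and it legitimizes the $1/k!$ by a move you do not use: since the claim is a family of polynomial identities with integer coefficients in the coefficients of $F$ and the components $x_S$, one may assume without loss of generality that $R$ is a polynomial ring over $\ZZ$, hence $\ZZ$-torsion-free, where the computation with $1/k!$ is harmless; on squarefree exponent vectors the multinomial coefficient collapses to $k!$ and cancels the $1/k!$. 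Your route instead avoids division entirely: for $X^m$ the multinomial coefficient attached to an (unordered) choice of $k$ distinct nilpotent pieces is $m!/(m-k)!=m(m-1)\cdots(m-k+1)$, the falling factorial that is exactly the coefficient pattern of $(X^m)^{(k)}$, so the identity holds verbatim over an arbitrary ring with no reduction step. That is a real trade-off: the paper's argument is shorter and the partition combinatorics fall out mechanically, while yours is more elementary and self-contained, and it is the honest resolution of the torsion worry you raise at the end (the paper resolves that same worry by the WLOG reduction rather than by eliminating $1/k!$). Two small cautions. First, your preliminary formula \eqref{eqc} is the specialization of \eqref{eqa} in which $x_S=0$ for $|S|\geq 2$ (only the partition of $S$ into singletons survives), so proving it separately is harmless but redundant; the paper proves \eqref{eqa} first and observes that \eqref{eqc}, \eqref{eqd}, and \eqref{eqb} all follow readily. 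Second, in your ``direct collection'' of terms, make sure the count is over unordered selections, as the multinomial theorem naturally gives it, since counting ordered tuples each weighted by $F^{(k)}(x_\emptyset)$ would overcount by $k!$ --- the very factor you are trying not to divide by; your remark about ordered versus unordered partitions shows you are aware of this, but it is the one place where the bookkeeping must be done carefully.
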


\begin{proof}
Let $x = x_\emptyset$, so $\xx-x = \sum_{T \neq \emptyset} x_T \varepsilon_T$.  We may assume without loss of generality that $R$ is a polynomial ring over $\ZZ$ and is therefore $\ZZ$-torsion-free.  Then, by Taylor's theorem and the multinomial theorem, we have the following.
\begin{align*} \displaystyle
F(\xx) & = \sum_{k = 0}^\infty F^{(k)}(x) \frac{1}{k!}(\xx-x)^k \\
  & = \sum_{k = 0}^\infty F^{(k)}(x) \frac{1}{k!} \sum_{\sum_{T \neq \emptyset} k_T = k} \frac{k!}{\prod_{T \neq \emptyset} k_T!} \prod_{T \neq \emptyset} (x_T \varepsilon_T)^{k_T} \\
 & = \sum_{k = 0}^\infty F^{(k)}(x) \frac{1}{k!} \sum_{\sum_{T \neq \emptyset} k_T = k \atop {\forall T \, k_T \in \{0,1\}}} k! \prod_{T \neq \emptyset} (x_T \varepsilon_T)^{k_T} \\
 & = \sum_{k = 0}^\infty F^{(k)}(x) \sum_{\sum_{T \neq \emptyset} k_T = k \atop {
\forall T \, k_T \in \{0,1\}}} \prod_{T \neq \emptyset} (x_T \varepsilon_T)^{k_T} \\
 & = \sum_{k = 0}^n F^{(k)}(x)\sum_{\emptyset \neq S \subseteq \{1,\ldots,n\}} \sum_{\textup{partition } \atop{S_1, \ldots, S_k} \textup{ of } S}  \left( \prod_{i = 1}^k x_{S_i} \right) \varepsilon_S \\
 & = \sum_{ S \subseteq \{1,\ldots,n\}} \left( \sum_{\textup{partition } \atop{S_1, \ldots, S_k} \textup{ of } S}   F^{(k)}(x) \prod_{i = 1}^k x_{S_i} \right) \varepsilon_S. \\
\end{align*}
This proves Eq.\ \ref{eqa}, from which the rest of the lemma readily follows.
\end{proof}

\begin{remark}
Let $\HH^n_K = K[\varepsilon_1, \ldots, \varepsilon_n]$, where $K$ is $\RR$ or $\CC$.  The lemma allows one to extend any partial function $F: K \longrightarrow K$ to a partial function $F: \HH^n_K \longrightarrow \HH^n_K$ defined on the set $\displaystyle\left\{z \in \HH^n_K: F^{(n)}(|z|) \textup{ exists}\right\}$, where $|z|$ denotes the coefficient $x_\emptyset$ of $z = \sum_S x_S \varepsilon_S$.
\end{remark}

\begin{example}
For $n = 2$ one has
$$F(x+x_1\varepsilon_1 + x_2 \varepsilon_2 + x_{12}\varepsilon_{1} \varepsilon_2) = F(x) + F'(x)x_1 \varepsilon_1 + F'(x)x_2 \varepsilon_2 + (F''(x)x_1x_2+ F'(x)x_{12}) \varepsilon_1 \varepsilon_2.$$  For $n = 3$ one has that the expression
$$F(x+x_1\varepsilon_1 + x_2\varepsilon_2+ x_3\varepsilon_3 +   x_{12}\varepsilon_1 \varepsilon_2 +  x_{13}\varepsilon_1 \varepsilon_3 +  x_{23}\varepsilon_2 \varepsilon_3 +   x_{123}\varepsilon_1 \varepsilon_2 \varepsilon_3)$$
is equal to the sum of $F(x)$, the linear terms $F'(x)x_1 \varepsilon_1 +  F'(x)x_2 \varepsilon_2 +  F'(x)x_3 \varepsilon_3,$
the quadratic terms
$$(F''(x)x_1x_2 + F'(x)x_{12})\varepsilon_1 \varepsilon_2 +  (F''(x)x_1x_3 + F'(x)x_{13})\varepsilon_1 \varepsilon_3 + (F''(x)x_2x_3 + F'(x)x_{23})\varepsilon_2 \varepsilon_3,$$
and the cubic term
$$(F'''(x)x_1 x_2 x_3 + F''(x) x_{12}x_3 + F''(x) x_{13}x_2 + F''(x)x_{23}x_1 + F'(x)x_{123}) \varepsilon_1 \varepsilon_2 \varepsilon_3.$$
\end{example}

\begin{remark}
There are a total of $B_{n+1} = \sum_{k = 0}^n {n \choose k} B_k$ terms in the double sum of Eq.\ \ref{eqa}, while there are a total of $B_{n+2}-B_{n+1} = \sum_{k = 0}^n \sum_{l = 0}^k {n \choose k} {k \choose l} B_l$ terms in the triple sum of Eq.\ \ref{eqb},  where $B_k$ denotes the $k$th Bell number, equal to the number of partitions of a $k$-element set.  Also, there are a total of $2^n$ terms in the sum of Eq.\ \ref{eqc} and a total of $3^n$ terms in the double sum of Eq.\ \ref{eqd}.
\end{remark}

\begin{theorem}[with Ryan Hoffman, Cybill Kreil, and Alexander McBroom]\label{hyperdualtheorem2}
Let $R$ be a ring.  For all positive integers $n$ and all nonnegative integers  $k$ one has
 $$\Int^{(k)}(R,R[\varepsilon_1, \ldots, \varepsilon_n]) =  \bigoplus_{S \subseteq \{1,2,\ldots,n\}} \Int^{(k)}(R) \varepsilon_S,$$ 
$$\Int^{(k)}(R[\varepsilon_1, \ldots, \varepsilon_n]) = \bigoplus_{S \subseteq \{1,2,\ldots,n\}}  \Int^{(n+k-|S|)}(R) \varepsilon_S,$$ 
$$\Int^{(k)}(R[\varepsilon_1, \ldots, \varepsilon_n])  = \Int^{(k)}\left(R + \sum_{i = 1}^n \{0,1\} \varepsilon_i, \, R[\varepsilon_1, \ldots, \varepsilon_n]\right).$$
\end{theorem}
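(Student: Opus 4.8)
The plan is to set $E = R[\varepsilon_1,\ldots,\varepsilon_n]$ and $K = T(R)$, so that $T(E) = K[\varepsilon_1,\ldots,\varepsilon_n]$ by Lemma \ref{hyperdualtotalquotientring}, and to write every $F \in T(E)[X]$ uniquely as $F = \sum_{S \subseteq \{1,\ldots,n\}} f_S \varepsilon_S$ with $f_S \in K[X]$, as in Lemma \ref{hyperdualpolynomialfunctions}; since the $\varepsilon_S$ are constant in $X$, one has $F^{(j)} = \sum_S f_S^{(j)}\varepsilon_S$. Write $A_0 = R + \sum_{i = 1}^n \{0,1\}\varepsilon_i$, and set $P = \Int^{(k)}(E)$, $Q = \bigoplus_S \Int^{(n+k-|S|)}(R)\varepsilon_S$, and $W = \Int^{(k)}(A_0, E)$. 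The first displayed identity is immediate: for a scalar $x \in R$ there are no Taylor corrections, so $F^{(j)}(x) = \sum_S f_S^{(j)}(x)\varepsilon_S$, which lies in $E$ for all $0 \le j \le k$ and all $x$ exactly when every $f_S \in \Int^{(k)}(R)$. For the remaining two identities I would prove the cycle $Q \subseteq P \subseteq W \subseteq Q$, which forces all three to coincide and yields both the second and third displays at once.

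The inclusion $P \subseteq W$ is free, since $A_0 \subseteq E$. For $Q \subseteq P$ I would first reduce to $k = 0$: if $f_S \in \Int^{(n+k-|S|)}(R)$ for all $S$, then each $F^{(j)}$ with $0 \le j \le k$ has all coefficients $f_S^{(j)} \in \Int^{(n+k-|S|-j)}(R) \subseteq \Int^{(n-|S|)}(R)$, so it suffices to show that $f_S \in \Int^{(n-|S|)}(R)$ for all $S$ implies $F(\xx) \in E$ for every $\xx \in E$. Expanding $F(\xx)$ by Eq.\ \ref{eqb} and inspecting the coefficient of a given $\varepsilon_{S''}$, each summand is $f_{S''-T}^{(m)}(x_\emptyset)$ times a product of coordinates of $\xx$, where $T \subseteq S''$ and $m$ is the number of blocks of a partition of $T$, so $m \le |T|$. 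Since $m \le |T| \le n - |S''-T|$ (the last inequality using $|S''| \le n$), each $f_{S''-T}^{(m)}(x_\emptyset)$ lies in $R$, hence so does the whole coefficient; this gives $Q \subseteq P$.

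The heart of the argument, and the step I expect to be the main obstacle, is $W \subseteq Q$: I must show that integer-valuedness of $F, F', \ldots, F^{(k)}$ on the comparatively small set $A_0$ already forces $f_S \in \Int^{(n+k-|S|)}(R)$ for every $S$, even though the $f_S$ are entangled across the components of $F^{(j)}$ evaluated at a point of $A_0$. To disentangle them I would argue by downward induction on $|S|$. For a point $x + \sum_{i \in U}\varepsilon_i \in A_0$, Eq.\ \ref{eqd} applied to $F^{(j)}$ shows that the coefficient of $\varepsilon_{S''}$ is $\sum_{T \subseteq S'' \cap U} f_{S''-T}^{(j+|T|)}(x)$. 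Choosing $U = S'' - S$ for a set $S''$ with $S \subseteq S'' \subseteq \{1,\ldots,n\}$ collapses this (writing $S' = S'' - T$) to $\sum_{S \subseteq S' \subseteq S''} f_{S'}^{(j + |S''| - |S'|)}(x)$, a sum in which $S$ is the unique smallest index set and the one carrying the highest derivative order $j + |S''| - |S|$. All other terms have $S' \supsetneq S$ and derivative order $j + |S''| - |S'| \le n + k - |S'|$, so by the inductive hypothesis they already lie in $R$; subtracting them shows $f_S^{(j+|S''|-|S|)}(x) \in R$ for all $x \in R$.

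Finally I would observe that as $j$ ranges over $0, \ldots, k$ and $S''$ ranges over the supersets of $S$ of every size from $|S|$ to $n$, the exponent $j + |S''| - |S|$ sweeps out every integer from $0$ to $n + k - |S|$; hence $f_S^{(m)}(x) \in R$ for all such $m$ and all $x$, that is, $f_S \in \Int^{(n+k-|S|)}(R)$. The base case $S = \{1,\ldots,n\}$ is the instance with no proper superset, where the collapsed sum is simply $f_S^{(j)}(x)$ for $0 \le j \le k$. This closes the induction and the cycle, giving $P = Q = W$ and hence all three displayed identities. The delicate point throughout is the bookkeeping that simultaneously guarantees that the chosen evaluation points lie in $A_0$ and that the induction clears exactly the unwanted terms while isolating $f_S$ at each derivative order.
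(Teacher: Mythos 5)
Your proposal is correct and follows essentially the same route as the paper's proof: the same cycle of inclusions $\bigoplus_S \Int^{(n+k-|S|)}(R)\varepsilon_S \subseteq \Int^{(k)}(R[\varepsilon_1,\ldots,\varepsilon_n]) \subseteq \Int^{(k)}\bigl(R+\sum_i\{0,1\}\varepsilon_i,\, R[\varepsilon_1,\ldots,\varepsilon_n]\bigr) \subseteq \bigoplus_S \Int^{(n+k-|S|)}(R)\varepsilon_S$, with the first inclusion via Eq.\ \ref{eqb} and the last via evaluation at $0/1$-coordinate points and an induction that peels off already-controlled terms. The only differences are organizational: you carry general $k$ through the whole argument rather than reducing to $k=0$ at the outset, and your downward induction on the target set $S$ (varying supersets $S''$) is the same subtraction scheme as the paper's upward induction on $|T|$ inside a fixed coefficient index, just indexed from the other end.
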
 

\begin{proof}
Let $\mathcal{A} = \bigoplus_{S \subseteq \{1,2,\ldots,n\}}  \Int^{(n-|S|)}(R) \varepsilon_S$.
The theorem can be reduced to showing that $$\mathcal{A} = \Int(R[\varepsilon_1, \ldots, \varepsilon_n]) = \Int\left(R + \sum_{i = 1}^n \{0,1\} \varepsilon_i, \, R[\varepsilon_1, \ldots, \varepsilon_n]\right).$$   Each of these three rings is a subring of  the ring $K[\varepsilon_1, \ldots, \varepsilon_n][X]$, where $K = T(R)$.  Let $F = \sum_S f_S \varepsilon_S \in K[\varepsilon_1, \ldots, \varepsilon_n][X]$.    Suppose that $F \in \mathcal{A}$, so that $f_S^{(k)} \in \Int(R)$ for all $S \subseteq \{1,2,\ldots,n\}$ and all $k$ with $|S| + k \leq n$.  For all $T \subseteq S \subseteq \{1,2,\ldots,n\}$ such that $T$ has a partition into $k$ subsets, one has $|T| \geq k$ and therefore $|S-T| + k = |S|-|T|+k \leq n -(|T|-k) \leq n$.  Therefore  $f_{S-T}^{(k)} \in \Int(R)$ for all such $S,T,k$.  It follows from Lemma \ref{hyperdualpolynomialfunctions} that $$ F(\xx) = \sum_{S \subseteq \{1,2,\ldots,n\}} \left(\sum_{T \subseteq S} \sum_{{\textup{partition }} \atop {T_1, \ldots, T_k {\textup{ of } T}}} f_{S-T}^{(k)}(x_\emptyset)\prod_{i = 1}^k x_{T_i} \right) \varepsilon_S  \in R[\varepsilon_1, \ldots, \varepsilon_n]$$
for all $\xx = \sum_S x_S \varepsilon_S \in R[\varepsilon_1, \ldots, \varepsilon_n]$, and therefore $F \in \Int(R[\varepsilon_1, \ldots, \varepsilon_n])$. Therefore one has
$$\mathcal{A} \subseteq \Int(R[\varepsilon_1, \ldots, \varepsilon_n]) \subseteq \Int\left(R + \sum_{i = 1}^n \{0,1\} \varepsilon_i, \, R[\varepsilon_1, \ldots, \varepsilon_n]\right).$$  It remains only to show, then, that if $F \in \Int\left(R + \sum_{i = 1}^n \{0,1\} \varepsilon_i, \, R[\varepsilon_1, \ldots, \varepsilon_n]\right)$, then $F \in \mathcal{A}$.  By Lemma \ref{hyperdualpolynomialfunctions} one has
$$F(x + x_1 \varepsilon_1 + x_2 \varepsilon_2 + \cdots + x_n \varepsilon_n) = \sum_{S \subseteq \{1,2,\ldots,n\}}  \left( \sum_{T \subseteq S} f_{S-T}^{(|S|)}(x) \prod_{i \in T} x_i \right) \varepsilon_S \in R[\varepsilon_1, \ldots, \varepsilon_n],$$
and therefore  $$\sum_{T \subseteq S} f_{S-T}^{(|T|)}(x) \prod_{i \in T} x_i \in R \mbox{ for all } S \subseteq \{1,2,\ldots,n\},$$
for all $x \in R$ and all $x_1, \ldots, x_n \in \{0,1\}$.  We claim that each of the terms $f_{S-T}^{(|T|)}(x)$ appearing in the above sum (for all $T \subseteq S \subseteq \{1,2,\ldots,n\}$) must lie in $R$ and therefore $F \in \mathcal{A}$.  The proof for fixed $S$ and $x$ is by induction on $|T|$.  If $|T| = 0$, then $T = \emptyset$.  Letting $x_i = 0$ for all $i$, we see that 
$$f_S^{(0)}(x) = \sum_{V\subseteq S} f_{S-V}^{(|V|)}(x) \prod_{i \in V} x_i \in R,$$
which proves the base case.  Suppose that  $f_{S-T}^{(|S|)}(x) \in R$ for subsets $T$ of $S$ of cardinality at most $k-1$.  Let $T$ be a subset of $S$ of cardinality $k$.  Let 
\begin{eqnarray*}
  x_i  = \left\{
     \begin{array}{ll}
       1  & \mbox{if } i \in T\\
       0 & \mbox{otherwise}.
     \end{array}
   \right.
\end{eqnarray*}
Then $$f_{S-T}^{(|T|)}(x) + \sum_{V\subsetneq T} f_{S-V}^{(|V|)}(x)  =  \sum_{V\subseteq T} f_{S-V}^{(|V|)}(x) =  \sum_{V\subseteq S} f_{S-V}^{(|V|)}(x) \prod_{i \in V} x_i \in R,$$
while by the inductive hypothesis one has $\sum_{V\subsetneq T} f_{S-V}^{(|V|)}(x)  \in R$.  Therefore $f_{S-T}^{(|T|)}(x)  \in R$.
\end{proof}

\begin{remark}
A subset $A$ of a ring $R$ is said to be {\it  polynomially dense in $R$} if $\Int(A,R) = \Int(R)$.  Theorem \ref{hyperdualtheorem2} is equivalent to the conjunction of Theorem \ref{hyperdualtheorem} and the fact that the set $R+\sum_{i=1}^n \{0,1\} \varepsilon_i$ is polynomially dense  in $R[\varepsilon_1,\ldots, \varepsilon_n]$.
\end{remark}

\section{Integer-valued polynomials on generalizations of the hyper-dual numbers}

Let $R$ be a ring.  Consider the ring $R[j] = R[T]/(T^2-1)$, where $j$ is the image of $T$ in the given quotient ring.  The ring $\RR[j]$ is called the ring of {\it split complex numbers}, or {\it hyperbolic numbers}.  We  call $R[j]$ the ring of {\it split complex numbers over $R$}.   Note that $R[j] = R[1+j] = R[U]/(U(U-2))$, where $j$ is the image of $U-1$ in the given quotient ring. 

The dual numbers and split complex numbers over $R$ generalize as follows.   Let $r$ be a fixed element of $R$, and let $R[\rho] = R[T]/(T(T-r))$, where $\rho$ is the image of $T$ in the given quotient ring.    If $r = 0$, then we obtain the ring of dual numbers $R[\rho] = R[\varepsilon]$, where $\rho = \varepsilon$.   If $r = 2$, then we obtain the ring of split complex numbers $R[\rho] = R[j]$, where $\rho = 1+j$.   If $r$ is a unit, then $R[\rho] \cong R \times R$.  

For all $z = x+y\rho \in R[\rho]$ we define $\overline{z} = x+y(r-\rho)$ and $||z|| = z\overline{z} = x(x+ry) = x^2+rxy$.  Note that $\overline{\overline{z}} = z$ and $\overline{zw} = \overline{z}\ \overline{w}$, and therefore $||zw|| = ||z|| \, ||w||$, for all $z,w \in R[\rho]$.
The map $Q: R^2 \longrightarrow R$ given by $(x,y) \longmapsto x^2 + rxy$ is a rank two quadratic form over $R$, and the ring $R[\rho]$ is the even Clifford algebra of $Q$.

\begin{lemma}
Let $R$ be a ring and $z \in R[\rho]$.  Then $z = x+y\rho$ is a non-zerodivisor (resp., unit) of $R[\rho]$ if and only if $||z||$ is a non-zerodivisor (resp., unit) of $R$, if and only if $x$ and $x+ry$ are non-zerodivisors (resp., units) of $R$.
\end{lemma}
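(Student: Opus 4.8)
The plan is to treat the equivalence (2)$\Leftrightarrow$(3) separately from its connection to (1), since it is purely a statement about the factorization $||z|| = x(x+ry)$. In any commutative ring a product $ab$ is a non-zerodivisor (resp.\ a unit) if and only if each of $a$ and $b$ is: the ``if'' directions are standard, and for ``only if'' one uses that $abc = 0 \Rightarrow c = 0$ forces $a$ (and $b$) to be non-zerodivisors (resp.\ that $a(bu) = 1$ exhibits $a$ as a unit). Applying this with $a = x$ and $b = x+ry$ shows immediately that $||z||$ is a non-zerodivisor (resp.\ unit) of $R$ if and only if $x$ and $x+ry$ both are. It then remains only to connect (1) to (2).

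For the unit case the quickest route is through the multiplicative modulus. Since $z\overline{z} = ||z||$ with $\overline{z} = (x+ry) - y\rho$, if $||z||$ is a unit of $R$ then $||z||^{-1} \in R \subseteq R[\rho]$ and $z\cdot(||z||^{-1}\overline{z}) = 1$, so $z$ is a unit. Conversely, if $z$ is a unit with inverse $w$, then the multiplicativity $||zw|| = ||z||\,||w||$ noted above, together with $||1|| = 1$, gives $||z||\,||w|| = 1$, so $||z||$ is a unit of $R$. This proves (1)$\Leftrightarrow$(2) for units, and (2)$\Leftrightarrow$(3) finishes that case.

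For the non-zerodivisor case I would use that $z$ is a non-zerodivisor of $R[\rho]$ if and only if multiplication by $z$ is injective on the free $R$-module $R[\rho] = R \oplus R\rho$. Using $\rho^2 = r\rho$, one computes $z(a+b\rho) = xa + \big(ya + (x+ry)b\big)\rho$. The direction (3)$\Rightarrow$(1) is then immediate: if $z(a+b\rho) = 0$ then $xa = 0$ forces $a = 0$ (as $x$ is a non-zerodivisor), whereupon $(x+ry)b = 0$ forces $b = 0$, so $z$ is a non-zerodivisor.

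The real work is (1)$\Rightarrow$(3), and the main obstacle is that one cannot extract ``$x$ is a non-zerodivisor'' directly: $x$ occupies only the top-left entry of the multiplication, and a relation $xc = 0$ yields no obvious element of the kernel. I would instead first establish that $x+ry$ is a non-zerodivisor: if $(x+ry)c = 0$, then $z(c\rho) = (x+ry)c\,\rho = 0$, and injectivity of multiplication by $z$ forces $c\rho = 0$, i.e.\ $c = 0$. With this in hand I would bootstrap to $x$ via $z\overline{z} = ||z||$: if $xc = 0$, then $z(\overline{z}c) = ||z||\,c = x(x+ry)c = 0$, so $\overline{z}c = 0$; reading off the constant term of $\overline{z}c = (x+ry)c - yc\rho$ gives $(x+ry)c = 0$, and since $x+ry$ is now known to be a non-zerodivisor, $c = 0$. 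The apparent asymmetry between $x$ and $x+ry$ is harmless: conjugation is an $R$-algebra involution interchanging $x$ and $x+ry$, so the same argument applied to $\overline{z}$ independently yields that $x$ is a non-zerodivisor. Together with (2)$\Leftrightarrow$(3) this completes all three equivalences.
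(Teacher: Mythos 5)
The paper states this lemma without proof (it is one of several elementary lemmas in Sections 2 and 3 left to the reader), so there is no argument in the paper to compare yours against; the only question is whether your proof is correct, and it is. The reduction of (2)$\Leftrightarrow$(3) to the fact that a product in a commutative ring is a non-zerodivisor (resp.\ unit) if and only if both factors are is sound, and the unit case via $z\overline{z}=||z||$, $||1||=1$, and multiplicativity of the norm is exactly right. The only genuinely delicate point in the whole lemma is (1)$\Rightarrow$(3) for non-zerodivisors, and your two-step argument handles it correctly: $z\cdot(c\rho)=(x+ry)c\,\rho$ together with freeness of $R[\rho]=R\oplus R\rho$ gives that $x+ry$ is a non-zerodivisor, and then $z(\overline{z}c)=||z||\,c$ bootstraps to $x$. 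One remark: your closing observation that conjugation is an $R$-algebra involution can be promoted from a footnote to the main engine, shortcutting the non-zerodivisor case entirely. Since conjugation is a ring automorphism of $R[\rho]$, $\overline{z}$ is a non-zerodivisor whenever $z$ is, hence so is the product $||z||=z\overline{z}$ in $R[\rho]$; and an element of $R$ is a non-zerodivisor of $R$ if and only if it is one of $R[\rho]$ (again by freeness of $R[\rho]$ as an $R$-module). This gives (1)$\Rightarrow$(2) directly, with the remaining implications exactly as you prove them, and makes the bootstrap unnecessary. But this is a stylistic simplification; your proof as written is complete and correct.
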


\begin{lemma}\label{dualtotalquotientring}
Let $R$ be a ring.  Then the total quotient ring $T(R[\rho])$ of $R[\rho]$ is given by $T(R)[\rho]$, where $T(R)$ is the total quotient ring of $R$.
\end{lemma}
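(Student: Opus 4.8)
The plan is to follow verbatim the structure of the corresponding computation for the dual numbers (the lemma of identical statement proved earlier for $R[\varepsilon]$), replacing the modulus $|z|$ by the norm $||z||$ and substituting the preceding lemma characterizing the non-zerodivisors and units of $R[\rho]$. Concretely, I would prove the two inclusions $T(R[\rho]) \subseteq T(R)[\rho]$ and $T(R)[\rho] \subseteq T(R[\rho])$ separately, regarding all rings in sight as subrings of $T(R)[\rho]$.

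For the inclusion $T(R[\rho]) \subseteq T(R)[\rho]$, I would show that every non-zerodivisor of $R[\rho]$ is already a unit of $T(R)[\rho]$. Given a non-zerodivisor $z = x + y\rho$ of $R[\rho]$, the preceding lemma gives that $x$ and $x+ry$ are non-zerodivisors of $R$, hence units of $T(R)$. The key move is then to apply that same lemma, but with $T(R)$ in place of $R$: since $r \in R \subseteq T(R)$, the construction $T(R)[\rho] = T(R)[T]/(T(T-r))$ makes sense with the same $\rho$, and the unit half of the lemma yields that $z$ is a unit of $T(R)[\rho]$ precisely because $x$ and $x+ry$ are units of $T(R)$. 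Since the canonical map $R[\rho] \to T(R)[\rho]$ is injective (as $R \hookrightarrow T(R)$ and $R[\rho]$ is $R$-free on $1,\rho$) and sends every non-zerodivisor of $R[\rho]$ to a unit, the universal property of localization realizes $T(R[\rho])$ as a subring of $T(R)[\rho]$.

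For the reverse inclusion, I would take an arbitrary $w \in T(R)[\rho]$, write $w = xu^{-1} + yv^{-1}\rho$ with $x,y \in R$ and $u,v \in R^{\reg}$, and clear denominators to get $w = (vx + uy\rho)(uv)^{-1}$. Here $vx + uy\rho \in R[\rho]$, while $uv$, viewed in $R[\rho]$ as $uv + 0\cdot\rho$, is a non-zerodivisor of $R[\rho]$ by the preceding lemma, since both $x$ and $x+ry$ for this element equal the non-zerodivisor $uv$ of $R$. Hence $(uv)^{-1} \in T(R[\rho])$ and $w \in T(R[\rho])$, so $T(R)[\rho] \subseteq T(R[\rho])$, and equality follows.

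I expect the only genuine subtlety to be the legitimacy of invoking the non-zerodivisor/unit lemma over $T(R)$ rather than over $R$, together with the attendant identification $T(R)[\rho] \cong T(R) \otimes_R R[\rho]$ ensuring that the ambient polynomial-type ring and the element $\rho$ are the same in both settings; everything else is formal manipulation identical to the dual-number case. No step looks genuinely hard, since the substantive content has been front-loaded into the preceding structural lemma on non-zerodivisors.
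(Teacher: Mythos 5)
Your proposal is correct and takes essentially the same approach as the paper: the paper states this lemma without proof precisely because it is the direct analogue of its proof of the corresponding lemma for $R[\varepsilon]$ in Section 2, and your argument---non-zerodivisors of $R[\rho]$ become units of $T(R)[\rho]$ via the preceding unit/non-zerodivisor lemma (applied over both $R$ and $T(R)$), followed by clearing denominators for the reverse inclusion---is exactly that adaptation. Your added care in justifying the application of the structural lemma over $T(R)$, using the identification $T(R)[\rho] \cong T(R) \otimes_R R[\rho]$, is sound and fills in what the paper leaves implicit.
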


For all $f \in R[X]$, we may write $f(X+Y)-f(X) = Yg(X,Y)$ for a unique $g \in R[X,Y]$.  We write $\Delta_Y f(X) = g(X,Y) \in R[X,Y]$, so that $\Delta_Y f(X)$ denotes $\frac{f(X+Y)-f(X)}{Y}$ but is a polynomial in $X$ and $Y$.   We may then define $\Delta_y f(X) = g(X,y) \in R[X]$, whence $\Delta_y$ is an $R$-linear operator on $R[X]$,  for all $y \in R$.  One has $\Delta_Y f(X) = f'(X) + YG(X,Y)$ for some $G \in R[X,Y]$, and therefore $\Delta_0 f(X) = f'(X)$.  One has the following generalization of the product and chain rules for derivatives:
$$\Delta_Y(f \cdot g)(X) = \Delta_Y f(X) \cdot g(X+Y)+ f(X) \cdot \Delta_Y g(X)$$
and
$$\Delta_Y(f \circ g)(X) = (\Delta_{g(X+Y))-g(X)} f)(g(X)) \Delta_Y g(X)$$
for all $f, g \in R[X]$.

\begin{lemma}\label{hyperbolicpolynomialfunctions}
Let $R$ be a ring, let $r \in R$, and let $R[\rho] = R[T]/(T(T-r))$, where $\rho$ is the image of $T$ in the given quotient ring.  For all $F \in R[\rho][X]$ and all $x,y \in R$, one has
$$F(x + y \rho) = F(x) + \Delta_{yr}F(x) \cdot y\rho = F(x+yr)+y\Delta_{yr}F(x) \cdot (\rho-r).$$
\end{lemma}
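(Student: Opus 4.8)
The plan is to reduce the identity to a single monomial computation and then extend by $R$-linearity together with the relation $\rho^2 = r\rho$. First I would record the basic law governing powers of $\rho$: since $\rho(\rho - r) = 0$ in $R[\rho]$, one has $\rho^2 = r\rho$, and hence by an immediate induction $\rho^k = r^{k-1}\rho$ for every $k \geq 1$. I would also fix the meaning of $\Delta_{yr}$ on $R[\rho][X]$: the defining equation $G(X+Y) - G(X) = Y\,\Delta_Y G(X)$ makes sense verbatim over the commutative coefficient ring $R[\rho]$, and concretely $\Delta_{yr}(f + g\rho) = \Delta_{yr}f + (\Delta_{yr}g)\rho$ for $f,g \in R[X]$.

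The core step is the case $F = X^n$. Expanding by the binomial theorem and substituting $\rho^k = r^{k-1}\rho$, I would obtain
$$(x+y\rho)^n = x^n + \sum_{k=1}^n \binom{n}{k} x^{n-k} y^k r^{k-1}\,\rho = x^n + y\left(\sum_{k=1}^n \binom{n}{k} x^{n-k}(yr)^{k-1}\right)\rho,$$
using $y^k r^{k-1} = y\,(yr)^{k-1}$. The parenthesized sum is precisely $\Delta_{yr}(X^n)(x)$, since $\Delta_Y(X^n) = \sum_{k=1}^n \binom{n}{k} X^{n-k} Y^{k-1}$ by the definition of $\Delta_Y$. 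Thus $(x+y\rho)^n = x^n + \Delta_{yr}(X^n)(x)\cdot y\rho$, which is the asserted first equality for $F = X^n$.

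Next I would promote this to arbitrary $F$. Because both $h \mapsto h(x)$ and $h \mapsto \Delta_{yr}h(x)$ are $R$-linear operators on $R[X]$, summing over monomials gives $f(x+y\rho) = f(x) + \Delta_{yr}f(x)\cdot y\rho$ for every $f \in R[X]$. Writing a general $F \in R[\rho][X]$ as $F = f + g\rho$ with $f,g \in R[X]$, I then compute $F(x+y\rho) = f(x+y\rho) + g(x+y\rho)\rho$, apply the previous formula to each of $f$ and $g$, and collapse the term $g(x+y\rho)\rho$ using $\rho^2 = r\rho$. Comparing the outcome with $F(x) + \Delta_{yr}F(x)\cdot y\rho$ — expanded via $\Delta_{yr}F = \Delta_{yr}f + (\Delta_{yr}g)\rho$ and again $\rho^2 = r\rho$ — yields the first equality in general.

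Finally, the second equality follows formally from the first. The defining property of $\Delta$, evaluated at $X = x$ and $Y = yr \in R$, gives $F(x+yr) = F(x) + yr\,\Delta_{yr}F(x)$, whence $F(x+yr) + y\,\Delta_{yr}F(x)(\rho - r) = F(x) + y\,\Delta_{yr}F(x)\,\rho = F(x+y\rho)$, as required. The argument is entirely elementary; the one point demanding care is the bookkeeping of the relation $\rho^2 = r\rho$ once the coefficients of $F$ themselves involve $\rho$, that is, in passing from $f \in R[X]$ to $F \in R[\rho][X]$. As an alternative to the final bookkeeping, one could invert $r$, where the splitting $R[\rho] \cong R \times R$ sends $x + y\rho$ to $(x, x+yr)$, verify the identity there, and descend by a universal ($\ZZ$-torsion-free) reduction, but the direct computation above avoids the need to invert $r$ altogether.
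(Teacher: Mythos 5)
Your proof is correct and takes essentially the same approach as the paper, whose entire proof is the remark that one verifies the identity for $F = X^n$ and extends by linearity --- exactly your monomial computation via $\rho^k = r^{k-1}\rho$ followed by the linearity step. Your explicit bookkeeping for coefficients in $R[\rho]$ (writing $F = f + g\rho$ and collapsing $\rho^2 = r\rho$) merely fills in what the paper leaves implicit; observing that evaluation at $x+y\rho$ and $\Delta_{yr}$ are $R[\rho]$-linear operators would let you extend by $R[\rho]$-linearity directly and skip that step.
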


\begin{proof}
As with Lemma \ref{dualpolynomialfunctions}, one can verify the lemma for $F = X^n$ and then extend by linearity.
\end{proof}

For any $r \in R$, we let 
\begin{align*} \Int(R;r) & = \{f \in \Int(R): \forall y \in R \ (y \Delta_{yr}f(X) \in \Int(R))\}.
\end{align*}
Note, for example, that $$\Int(R; 0) = \Int^{(1)}(R)$$
and $$\Int(R;1) = \Int(R; r) = \Int(R)$$
if $r$ is a unit of $R$.

\begin{proposition}\label{hyperbolicint}
Let $R$ be a ring, let $r \in R$, and let $R[\rho] = R[T]/(T(T-r))$, where $\rho$ is the image of $T$ in the given quotient ring.  For all $f, g \in T(R)[X]$, one has $f + g \rho \in \Int(R[\rho])$ if and only if $f \in \Int(R; r)$ and $g \in \Int(R)$.
Equivalently, one has $$\Int(R[\rho]) = \Int(R; r) +\Int(R) \rho.$$
\end{proposition}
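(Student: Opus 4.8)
The plan is to reduce everything to the single evaluation formula of Lemma~\ref{hyperbolicpolynomialfunctions}, read off the coefficients of $1$ and $\rho$, and observe that the one ``extra'' term that appears is automatically integer-valued. First I would set up coordinates: by Lemma~\ref{dualtotalquotientring} the total quotient ring of $R[\rho]$ is $K[\rho]$, where $K = T(R)$, and since $K[\rho][X] = K[X][\rho]$, every $F \in T(R[\rho])[X]$ can be written uniquely as $F = f + g\rho$ with $f, g \in K[X]$. As every element of $R[\rho]$ has the form $x + y\rho$ with $x, y \in R$, one has $F \in \Int(R[\rho])$ if and only if $F(x+y\rho) \in R[\rho]$ for all $x, y \in R$.

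Next I would compute this evaluation. Applying Lemma~\ref{hyperbolicpolynomialfunctions} over the ring $K$ in place of $R$, together with the $K[\rho]$-linearity of $\Delta_{yr}$ in $X$ and the relation $\rho^2 = r\rho$, gives
$$F(x + y\rho) = f(x) + \bigl(g(x) + y\,\Delta_{yr}f(x) + yr\,\Delta_{yr}g(x)\bigr)\rho.$$
Here the cross term $yr\,\Delta_{yr}g(x)$ arises precisely from $(\Delta_{yr}g(x)\,\rho)(y\rho) = yr\,\Delta_{yr}g(x)\,\rho$. Thus $F \in \Int(R[\rho])$ if and only if, for all $x, y \in R$, one has $f(x) \in R$ and $g(x) + y\,\Delta_{yr}f(x) + yr\,\Delta_{yr}g(x) \in R$.

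The key point, and the step that makes the statement collapse to the claimed form, is the identity $yr\,\Delta_{yr}g(x) = g(x+yr) - g(x)$, which is immediate from the defining relation $g(X+Y)-g(X) = Y\,\Delta_Y g(X)$ with $Y = yr$. Now I would argue both directions. Setting $y = 0$ in the $\rho$-coefficient condition forces $g(x) \in R$ for all $x$, i.e.\ $g \in \Int(R)$, while the constant-coefficient condition $f(x) \in R$ is exactly $f \in \Int(R)$. Once $g \in \Int(R)$, the term $g(x+yr) - g(x)$ lies in $R$ for all $x, y \in R$, so the $\rho$-coefficient condition reduces to $y\,\Delta_{yr}f(x) \in R$ for all $x, y$; together with $f \in \Int(R)$ this is precisely $f \in \Int(R; r)$. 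Conversely, if $f \in \Int(R;r)$ and $g \in \Int(R)$, then each of the three summands $g(x)$, $y\,\Delta_{yr}f(x)$, and $g(x+yr)-g(x)$ lies in $R$, so $F(x+y\rho) \in R[\rho]$. This proves the equivalence, and since the decomposition $F = f + g\rho$ is unique it yields $\Int(R[\rho]) = \Int(R; r) + \Int(R)\rho$.

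I do not expect a serious obstacle: the argument is essentially mechanical once the evaluation formula is in place. The only points requiring care are the bookkeeping of the cross term produced by $\rho^2 = r\rho$ in the computation of $F(x+y\rho)$, and the recognition that this cross term is harmless because it equals a difference of two values of $g$ at points of $R$. As a check, setting $r = 0$ recovers $\Int(R[\varepsilon]) = \Int^{(1)}(R) + \Int(R)\varepsilon$ of Proposition~\ref{dualintegervaluedpolynomials}, and taking $r$ a unit recovers $\Int(R[\rho]) = \Int(R) + \Int(R)\rho$, consistent with $\Int(R;r) = \Int(R)$ in those two cases.
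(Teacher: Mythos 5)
Your proof is correct and takes essentially the same route as the paper's: both expand $F(x+y\rho)$ via Lemma~\ref{hyperbolicpolynomialfunctions} and the relation $\rho^2 = r\rho$, absorb the cross term through the identity $g(x) + yr\,\Delta_{yr}g(x) = g(x+yr)$, and then obtain both directions by setting $y = 0$ to get $f, g \in \Int(R)$ and isolating $y\,\Delta_{yr}f(x)$. The only cosmetic difference is that the paper phrases the last step as subtracting $g(x+yr)\rho \in R[\rho]$ from the evaluation, which is the same reduction you perform.
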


\begin{proof}
Let $F = f+g\rho$. By Lemma \ref{hyperbolicpolynomialfunctions} one has
\begin{align*}
F(x+y\rho) & = f(x) + y \Delta_{yr}f(x) \cdot \rho + (g(x) + y \Delta_{yr}g(x) \cdot \rho)\rho\\
& = f(x) + (y \Delta_{yr}f(x) + g(x) + yr\Delta_{yr}g(x)) \rho\\
& = f(x) + (y \Delta_{yr}f(x) +g(x+yr))\rho.
\end{align*}
If $F \in \Int(R[\rho])$, then clearly $f,g \in \Int(R)$ (e.g., consider the case $y = 0$), so, subtracting $g(x+yr) \rho \in R[\rho]$ from the equation above we see that $f(x) +  y \Delta_{yr}f(x) \cdot\rho \in R[\rho]$ for all $x, y \in R$ and therefore $f \in \Int(R; r)$.   Conversely, we see immediately from the same equation that if $f \in \Int(R; r)$ and $g \in \Int(R)$ then $F = f+ g\rho \in \Int(R[\rho])$.
\end{proof}

\begin{corollary}
Let $R$ be a ring.   One has 
$$\Int(R[\varepsilon]) = \Int^{(1)}(R) + \Int(R)\varepsilon$$ 
and
$$\Int(R[j]) = \Int(R;2) +\Int(R) (1+j).$$
\end{corollary}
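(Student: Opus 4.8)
The plan is to obtain both displayed identities as immediate specializations of Proposition \ref{hyperbolicint}, applied with $r = 0$ in the first case and $r = 2$ in the second; no new argument is needed beyond matching presentations and recalling the two identities for $\Int(R;r)$ recorded just before the proposition.

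For the dual numbers I would note that $R[\varepsilon] = R[T]/(T^2) = R[T]/(T(T-0))$ is exactly the ring $R[\rho]$ for the choice $r = 0$, with $\rho = \varepsilon$. Proposition \ref{hyperbolicint} then yields $\Int(R[\varepsilon]) = \Int(R;0) + \Int(R)\varepsilon$, so it remains only to identify $\Int(R;0)$ with $\Int^{(1)}(R)$. This is the identity $\Int(R;0) = \Int^{(1)}(R)$ already noted in the text: using $\Delta_0 f(X) = f'(X)$, the defining condition that $y\,\Delta_{y\cdot 0} f(X) = y f'(X) \in \Int(R)$ for all $y \in R$ is equivalent, by taking $y = 1$ and using that $\Int(R)$ is an $R$-submodule of $T(R)[X]$ for the converse, to the single requirement $f' \in \Int(R)$. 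The first identity follows.

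For the split complex numbers I would invoke the presentation $R[j] = R[U]/(U(U-2))$ with $j$ the image of $U-1$, recorded at the start of the section. Writing $\rho$ for the image of $U$, this exhibits $R[j] = R[\rho]$ as the ring $R[T]/(T(T-r))$ with $r = 2$ and $\rho = 1+j$. Proposition \ref{hyperbolicint} then gives $\Int(R[\rho]) = \Int(R;2) + \Int(R)\rho$, which is precisely $\Int(R[j]) = \Int(R;2) + \Int(R)(1+j)$.

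Since the corollary is a pure specialization, I do not expect any genuine obstacle. The only points requiring care are bookkeeping ones: confirming that the quadratic relations $T^2 = 0$ and $U(U-2) = 0$ correspond to $r = 0$ and $r = 2$ respectively, and correctly tracking the change of generator $\rho = 1+j$ so that the summand $\Int(R)\rho$ is rewritten as $\Int(R)(1+j)$.
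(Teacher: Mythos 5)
Your proposal is correct and is exactly the intended argument: the paper states this corollary without proof as an immediate specialization of Proposition \ref{hyperbolicint}, using the identifications $r=0$, $\rho=\varepsilon$ and $r=2$, $\rho=1+j$ together with the already-recorded identity $\Int(R;0)=\Int^{(1)}(R)$. Your extra verification that $\Int(R;0)=\Int^{(1)}(R)$ (via $\Delta_0 f = f'$, taking $y=1$, and $R$-linearity of $\Int(R)$) is sound and just unpacks what the paper asserts.
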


Now, let 
$$\Int^{(1)}(R; r) = \{f \in T(R)[X]: f,f' \in \Int(R;r)\}$$
for any $r \in R$.

\begin{proposition}\label{mixedprop}
Let $R$ be a ring, let $r \in R$, and let $R[\rho, \varepsilon] = R[T,U]/(T(T-r),U^2)$, where $\rho$ is the image of $T$ and $\varepsilon$ is the image of $U$.  One has
$$\Int(R[\rho,\varepsilon]) = \Int^{(1)}(R;r) +  \Int^{(1)}(R) \rho + \Int(R; r) \varepsilon  + \Int(R) \rho \varepsilon.$$ Moreover, one has
$$\Int^{(1)}(R;r) = T(R)[X] \cap \Int(R[\rho,\varepsilon]),$$
$$\Int(R[\varepsilon]; r) = T(R)[\varepsilon][X] \cap \Int(R[\rho,\varepsilon]) =  \Int^{(1)}(R;r) + \Int(R; r) \varepsilon,$$
$$\Int^{(1)}(R[\rho])  = T(R)[\rho][X] \cap \Int(R[\rho,\varepsilon]) = \Int^{(1)}(R;r) +   \Int^{(1)}(R) \rho.$$
\end{proposition}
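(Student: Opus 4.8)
The plan is to exploit the two factorizations of the ring, namely $R[\rho,\varepsilon] = R[\rho][\varepsilon]$ (the dual numbers over the base $R[\rho]$) and $R[\rho,\varepsilon] = R[\varepsilon][\rho] = R[\varepsilon][T]/(T(T-r))$ (the split-type extension over the base $R[\varepsilon]$, legitimate since $r \in R \subseteq R[\varepsilon]$), and to apply Proposition \ref{dualintegervaluedpolynomials} and Proposition \ref{hyperbolicint} respectively over these bases. First I would record that $T(R[\rho,\varepsilon]) = T(R)[\rho,\varepsilon]$, obtained by composing Lemma \ref{dualtotalquotientring} ($T(R[\rho]) = T(R)[\rho]$) with the dual-number total quotient ring lemma applied over $R[\rho]$ (giving $T(R[\rho][\varepsilon]) = T(R[\rho])[\varepsilon]$). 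In particular $T(R)[\rho,\varepsilon][X]$ is free over $T(R)[X]$ on the basis $1, \rho, \varepsilon, \rho\varepsilon$, so every $F$ has a unique coordinate decomposition and all sums below are automatically internal direct sums.

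The core is the main formula. Applying Proposition \ref{dualintegervaluedpolynomials} with base ring $S = R[\rho]$ gives
$$\Int(R[\rho,\varepsilon]) = \Int^{(1)}(R[\rho]) + \Int(R[\rho])\varepsilon.$$
For the $\varepsilon$-coefficient I would substitute $\Int(R[\rho]) = \Int(R;r) + \Int(R)\rho$ from Proposition \ref{hyperbolicint}. For the constant term I would compute $\Int^{(1)}(R[\rho])$ directly: writing $F = f + g\rho$ with $f, g \in T(R)[X]$, one has $F' = f' + g'\rho$ because $\rho$ is constant in $X$; by Proposition \ref{hyperbolicint}, $F \in \Int(R[\rho])$ iff $f \in \Int(R;r)$ and $g \in \Int(R)$, and $F' \in \Int(R[\rho])$ iff $f' \in \Int(R;r)$ and $g' \in \Int(R)$. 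Hence $F \in \Int^{(1)}(R[\rho])$ iff $f \in \Int^{(1)}(R;r)$ and $g \in \Int^{(1)}(R)$, i.e.
$$\Int^{(1)}(R[\rho]) = \Int^{(1)}(R;r) + \Int^{(1)}(R)\rho.$$
Substituting both expressions and expanding yields the four-term formula for $\Int(R[\rho,\varepsilon])$.

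With the main formula in hand as a direct sum on the basis $1, \rho, \varepsilon, \rho\varepsilon$, each \emph{moreover} identity follows by intersecting with a coordinate sub-polynomial-ring and identifying the result two ways. Intersecting with $T(R)[X]$ (the $\rho^0\varepsilon^0$ part) isolates $\Int^{(1)}(R;r)$, giving the first identity. Intersecting with $T(R)[\rho][X]$ (the $\varepsilon^0$ part) isolates $\Int^{(1)}(R;r) + \Int^{(1)}(R)\rho$, which equals $\Int^{(1)}(R[\rho])$ both by the display above and, independently, because intersecting $\Int(S[\varepsilon]) = \Int^{(1)}(S) + \Int(S)\varepsilon$ (Proposition \ref{dualintegervaluedpolynomials}, $S = R[\rho]$) with $T(S)[X] = T(R)[\rho][X]$ isolates $\Int^{(1)}(S)$; this gives the third identity. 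Symmetrically, intersecting with $T(R)[\varepsilon][X]$ (the $\rho^0$ part) isolates $\Int^{(1)}(R;r) + \Int(R;r)\varepsilon$; viewing instead $R[\rho,\varepsilon] = R[\varepsilon][\rho]$ and intersecting $\Int(R[\varepsilon][\rho]) = \Int(R[\varepsilon];r) + \Int(R[\varepsilon])\rho$ (Proposition \ref{hyperbolicint} over the base $R[\varepsilon]$) with $T(R[\varepsilon])[X] = T(R)[\varepsilon][X]$ isolates $\Int(R[\varepsilon];r)$, yielding the second identity.

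I expect the only genuinely delicate point to be the consistency of the two factorizations: one must check that $\Int(R[\varepsilon];r)$, defined intrinsically over the base $R[\varepsilon]$ via the operator $\Delta_{yr}$ with $y$ ranging over all of $R[\varepsilon]$, really coincides with the coordinate component $\Int^{(1)}(R;r) + \Int(R;r)\varepsilon$ coming from the $R[\rho][\varepsilon]$ picture. This is forced by the fact that Proposition \ref{hyperbolicint} holds over an arbitrary base ring, so applying it as a black box over $R[\varepsilon]$ avoids analyzing $\Delta_{yr}$ for $y \notin R$ by hand; the two computations of the single intersection $T(R)[\varepsilon][X] \cap \Int(R[\rho,\varepsilon])$ must then agree. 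The remaining work — directness of the sums and the bookkeeping of which basis components survive each intersection — is routine given $T(R[\rho,\varepsilon]) = T(R)[\rho,\varepsilon]$.
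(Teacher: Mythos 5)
Your proof is correct, and it draws on the same ingredients as the paper's own proof --- the two factorizations $R[\rho,\varepsilon] = R[\rho][\varepsilon] = R[\varepsilon][\rho]$ together with Propositions \ref{dualintegervaluedpolynomials} and \ref{hyperbolicint} --- but it is organized genuinely differently, and in a way that fills a step the paper leaves to the reader. The paper keeps both expansions partial (it never expands $\Int^{(1)}(R[\rho])$ or $\Int(R[\varepsilon];r)$ on their own), asserts with a bare ``one checks'' that $\Int^{(1)}(R;r) = T(R)[X] \cap \Int(R[\rho,\varepsilon])$, and then recovers all four identities by playing the two expansions off against each other; neither factorization alone suffices in that argument. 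Your key move is instead the direct computation $\Int^{(1)}(R[\rho]) = \Int^{(1)}(R;r) + \Int^{(1)}(R)\rho$, which follows at once from the observation that the formal derivative respects the coordinate decomposition (if $F = f + g\rho$ then $F' = f' + g'\rho$) combined with Proposition \ref{hyperbolicint} applied to both $F$ and $F'$. That single computation makes the four-term formula a one-line substitution using the factorization $R[\rho][\varepsilon]$ alone, turns the paper's ``one checks'' claim into a corollary (intersect the four-term formula with $T(R)[X]$), and leaves the second factorization with only one essential job: identifying $\Int(R[\varepsilon];r)$ with the $\rho$-free component $\Int^{(1)}(R;r) + \Int(R;r)\varepsilon$, where, as you correctly note, Proposition \ref{hyperbolicint} is applied over the base $R[\varepsilon]$ as a black box so that $\Delta_{yr}$ never has to be analyzed by hand for dual-number arguments $y$ (the paper's first expansion relies on the same implicit point). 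The trade-off: the paper's argument is more symmetric and shorter on the page but elides its crucial verification, whereas yours is self-contained; moreover, your derivative-compatibility observation is the same mechanism the paper invokes elsewhere when it says the $\Int^{(k)}$ statements ``follow readily'' from the case $k=0$, so it fits naturally into the paper's framework.
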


\begin{proof}
Note that $R[\rho, \varepsilon] = R[\rho][\varepsilon]= R[\varepsilon][\rho]$.  Therefore one has
\begin{align*} 
\Int(R[\varepsilon,\rho]) & = \Int(R[\varepsilon]; r) +  \Int(R[\varepsilon]) \rho\\
& = \Int(R[\varepsilon]; r)  +  \Int^{(1)}(R) \rho + \Int(R) \rho \varepsilon
\end{align*}
and
\begin{align*} 
\Int(R[\rho, \varepsilon]) & = \Int^{(1)}(R[\rho]) +  \Int(R[\rho]) \varepsilon\\
& = \Int^{(1)}(R[\rho]) + \Int(R; r) \varepsilon  + \Int(R) \rho \varepsilon.
\end{align*}
One checks that the elements of  $\Int^{(1)}(R; r)$ are precisely the polynomials in $T(R)[X]$ that lie in $\Int(R[\rho,\varepsilon])$.  The proposition follows by combining these facts.
\end{proof}

To extend Proposition \ref{hyperbolicint} to compute $\Int(R[\rho_1, \ldots, \rho_n])$, we need the following definitions.  We let
$$\Int(R; \ ) = \Int(R),$$
and for any elements $s_1, \ldots, s_{m+1}$ of $R$ we let $$\Int(R; s_1, \ldots, s_{m+1})$$ equal the set of all $f \in \Int(R; s_1, \ldots, s_{m})$ such that $y \Delta_{y s_{m+1} \prod_{i \in \{1,\ldots,m\}-\{i_1, \ldots, i_k\}} s_i} f(X) \in \Int(R; s_{i_1},\ldots, s_{i_k})$ for all $y \in R$ and all $1 \leq i_1 < \cdots < i_k \leq m$, which by induction on $m$ is an $R[X]$-subalgebra of $\Int(R)$.  For example, $\Int(R; r)$ is as defined earlier, and
$$\Int(R; r,s) = \{f \in \Int(R, r): y \Delta_{ys}f(X) \in \Int(R;r) \mbox{ and } y\Delta_{yrs}f(X) \in \Int(R) \mbox{ for all } y \in R \}.$$
Since $\Delta_z \Delta_y = \Delta_y \Delta_z$ as operators on $T(R)[X]$ for all $y,z \in R$, one has
$$\Int(R; r_1, \ldots, r_m) = \Int(R; r_{\sigma(1)},\ldots, r_{\sigma(m)})$$ for any permutation $\sigma$ of $\{1,\ldots,  m\}$, so we may define $$\Int(R; S) = \Int(R; s_1, \ldots, s_m)$$ for any finite multisubset $S = \{s_1, \ldots, s_m\}$ of $R$.   Thus we have
$$\Int(R; \emptyset) = \Int(R),$$
and for any finite multisubset $S$ of $R$ and any $s \in R$ we have
$$\Int(R; S \cup\{s\}) = \{f \in \Int(R; S): y \Delta_{ys \prod_{t \in S-T} t} f(X) \in \Int(R; T)\mbox{ for all }T \subseteq S \mbox{ and } y \in R\}.$$

The following proposition is a straightforward generalization of  Proposition \ref{hyperbolicint}.
\begin{proposition}\label{mixedprop2}
Let $R$ be a ring, let $r \in R$, let $R[\rho] = R[T]/(T(T-r))$, where $\rho$ is the image of $T$ in the given quotient ring, and let $S$ be a finite multisubset of $R$.  Then
$$\Int(R[\rho]; S) = \Int(R; S \cup \{r\}) + \Int(R; S) \rho.$$
\end{proposition}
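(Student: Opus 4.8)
The plan is to induct on the cardinality $m = |S|$ of the multiset $S$, paralleling the proof of Proposition \ref{hyperbolicint}, which serves as the base case $m = 0$: there $S = \emptyset$ gives $\Int(R[\rho];\emptyset) = \Int(R[\rho])$ and $\Int(R;\{r\}) + \Int(R)\rho = \Int(R;r) + \Int(R)\rho$, so the claim is exactly Proposition \ref{hyperbolicint}. For the inductive step I would write $S = S' \cup \{s\}$ with $|S'| = m-1$, and invoke Lemma \ref{dualtotalquotientring} to write each $F \in T(R[\rho])[X] = T(R)[\rho][X]$ uniquely as $F = f + g\rho$ with $f,g \in T(R)[X]$. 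Unfolding the recursive definition, $F \in \Int(R[\rho]; S'\cup\{s\})$ iff $F \in \Int(R[\rho]; S')$ and, for every sub-multiset $T \subseteq S'$ and every $w \in R[\rho]$, one has $w\,\Delta_{ws\prod_{t\in S'-T}t}F \in \Int(R[\rho]; T)$.

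The heart of the argument is a finite-difference computation. Writing $w = a + b\rho$ and $c = s\prod_{t\in S'-T}t \in R$, I would expand $\Delta_{wc}$ acting on the components $f$ and $g$ and use $\rho^2 = r\rho$ to reduce $w\,\Delta_{wc}F$ to the form $\phi + \psi\rho$, with $\phi$ and $\psi$ written through operators $\Delta_{(\cdots)}f$ and $\Delta_{(\cdots)}g$ over $R$. The decisive feature, visible already in Proposition \ref{hyperbolicint} where the $\rho$-coefficient of $F(x+y\rho)$ is $y\Delta_{yr}f(x) + g(x+yr)$, is that collapsing $\rho^2$ to $r\rho$ inserts an extra factor of $r$ into the term attached to $f$ but not into the term attached to $g$. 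This is the mechanism by which the condition on $f$ is strengthened from being indexed by $S'$ to being indexed by $S' \cup \{r\}$, while the condition on $g$ remains indexed by $S'$.

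With this decomposition in hand, I would apply the inductive hypothesis to each membership $w\,\Delta_{wc}F \in \Int(R[\rho]; T)$, rewriting it as the pair $\phi \in \Int(R; T\cup\{r\})$ and $\psi \in \Int(R; T)$, and likewise rewrite the $\Int(R[\rho];S')$ condition as $f \in \Int(R; S'\cup\{r\})$ and $g \in \Int(R; S')$. Specializing $w$ ---taking $w \in R$ isolates the conditions governing $f$, and taking $w = \rho$ together with combinations isolates those governing $g$--- I would then check, against the recursive definition of $\Int(R;\,\cdot\,)$ applied to $S'\cup\{s\}\cup\{r\}$ and to $S'\cup\{s\}$, that the full collection of conditions is equivalent to the single conjunction $f \in \Int(R; S\cup\{r\})$ and $g \in \Int(R; S)$. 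This yields $\Int(R[\rho];S) = \Int(R; S\cup\{r\}) + \Int(R; S)\rho$.

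The main obstacle is precisely this last matching: verifying that the conditions produced by \emph{arbitrary} $w \in R[\rho]$ assemble into exactly the recursively-defined families for $\Int(R; S\cup\{r\})$ and $\Int(R; S)$, with the factor of $r$ landing uniformly on the $f$-side. A cleaner organizing principle, which I would use to guide (and independently check) the bookkeeping, is that $\Int(R;S)$ ought to coincide with $T(R)[X] \cap \Int(R[\rho_s : s\in S])$, where $\rho_s^2 = s\rho_s$; granting that identification, the proposition follows at once by applying Proposition \ref{hyperbolicint} over the base ring $B = R[\rho_s : s\in S]$ ---since $R[\rho][\rho_s : s\in S] = B[\rho]$--- and intersecting with $T(R)[\rho][X]$.
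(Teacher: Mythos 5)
Your primary route is essentially the paper's: the same strong induction on $|S|$ with Proposition \ref{hyperbolicint} as base case, the same decomposition $F = f+g\rho$ with $f,g\in T(R)[X]$, and the same plan of expanding $\alpha\Delta_{\alpha u}F$ and matching the resulting conditions against the recursive definitions. For the step you flag as the main obstacle, the paper proceeds concretely: with $\alpha = y+z\rho$ and $u = u_T = s\prod_{t\in S-T}t$, the $T(R)[X]$-component of $\alpha\Delta_{\alpha u}F(X)$ is $y\Delta_{yu}f(X)$ and the $\rho$-component is $y\Delta_{yu}g(X)+z\Delta_{zru}f(X+yu)+zr\Delta_{zru}g(X+yu)$ (the paper writes the last two terms without the translation by $yu$; one can check with $F=X^2$ that the translation is really there, but it is harmless because membership in $\Int(R;T)$ is stable under $X\mapsto X+c$ for $c\in R$). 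By the inductive hypothesis, $\alpha\Delta_{\alpha u}F\in\Int(R[\rho];T)$ means the first component lies in $\Int(R;T\cup\{r\})$ and the second in $\Int(R;T)$; the second condition is then disentangled by specialization, and here your sketch needs one correction: taking $\alpha=y\in R$ (i.e.\ $z=0$) does \emph{not} isolate the conditions on $f$ --- it yields both $y\Delta_{yu}f\in\Int(R;T\cup\{r\})$ and $y\Delta_{yu}g\in\Int(R;T)$ --- and it is the substitution $y=zr$ in the latter family, followed by subtraction from the $y=0$ specialization, that isolates $z\Delta_{zru}f\in\Int(R;T)$. The final matching is then the observation that the sub-multisets of $S\cup\{r\}$ are exactly the $T$ and the $T\cup\{r\}$ for $T\subseteq S$, so the two families of $f$-conditions, together with $f\in\Int(R;S\cup\{r\})$, are verbatim the defining conditions of $f\in\Int(R;S\cup\{s\}\cup\{r\})$, while the $g$-conditions are verbatim those of $g\in\Int(R;S\cup\{s\})$.

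One genuine caution: the ``cleaner organizing principle'' of your last paragraph cannot be used to discharge the obstacle, because it is circular. The identification $\Int(R;S)=T(R)[X]\cap\Int(R[\rho_s:s\in S])$ is precisely the $T=\emptyset$ component of Theorem \ref{hypertheorem}, and that theorem is proved in the paper by induction \emph{from} the present proposition; no independent proof of the identification is available at this stage, and any direct proof of it would amount to redoing exactly the bookkeeping above. (Your proposed reduction would moreover need the identification not only for $S$ but also for $S\cup\{r\}$, in order to recognize $T(R)[X]\cap\Int(B;r)$ after applying Proposition \ref{hyperbolicint} over $B$.) As a heuristic consistency check it is fine; as a proof it begs the question, so the inductive bookkeeping has to be carried out on its own.
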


\begin{proof}
The proof is by strong induction on $|S|$.  The base case $|S| = 0$ is  Proposition \ref{hyperbolicint}.  Suppose  the proposition holds for $|S| \leq m$, and let $s \in R$.  Then
$$\Int(R[\rho]; S \cup\{s\}) = \{F \in \Int(R[\rho]; S): \alpha \Delta_{\alpha s \prod_{t \in S-T} t} F(X) \in \Int(R[\rho]; T)\mbox{ for all }T \subseteq S, \alpha \in R[\rho]\}.$$
Let $F = f+g\rho$ and $\alpha = y + z\rho$, where $f, g \in T(R)[X]$ and $y,z \in R$.   Let $T \subseteq S$ and $u = u_T=  s \prod_{t \in S-T} t$.  
Then
\begin{align*}
\alpha \Delta_{\alpha u} F(X)  & = y \Delta_{yu} F(X) + z\Delta_{zru} F(X) \rho \\
  & = y \Delta_{yu} f(X) + (y \Delta_{yu} g(X) + z\Delta_{zru} f(X) + zr\Delta_{zru} g(X))\rho.
\end{align*}
Now, one has (1) $F \in \Int(R[\rho]; S \cup \{s\})$ if and only if one has (2a) $F \in \Int(R[\rho]; S)$ and (2b) $\alpha \Delta_{\alpha u} F(X) \in \Int(R[\rho]; T)$ for all $\alpha, T$.  By the inductive hypothesis (2a) holds if and only if $f \in \Int(R; S \cup \{r\})$ and $g \in \Int(R; S)$.   Moreover, (2b) holds if and only if $ y \Delta_{yu} f(X) \in \Int(R; T \cup \{r\})$  and $y \Delta_{yu} g(X), z\Delta_{zru} f(X) \in \Int(R; T)$ (hence also $zr\Delta_{zru} g(X) \in \Int(R; T)$ for $y = zr$) for all $y,z, T$.  It follows readily that (1) holds if and only if $f \in \Int(R; S \cup \{s\} \cup \{r\})$ and $g \in \Int(R; S \cup\{s\})$.  This completes the proof.
\end{proof}

The following theorem, which is the main result of this paper, generalizes the previous proposition and Theorem \ref{hyperdualtheorem}.

\begin{theorem}\label{hypertheorem}
Let $R$ be a ring, let $r_1, \ldots, r_n \in R$, let $$R[\rho_1, \ldots, \rho_n] = R[T_1, \ldots, T_n]/(T_1(T_1-r_1),\ldots, T_n(T_n-r_n)),$$ where $\rho_i$ is the image of $T_i$ in the given quotient ring for all $i$, and let $S$ be a finite multisubset of $R$.  Let $\rho_T = \prod_{i \in T} \rho_i$, and let $r_T$ denote the multisubset $\{r_{i_1}, \ldots, r_{i_k}\}$ of $R$, for all $T = \{i_1, \ldots, i_k\} \subseteq \nn = \{1,2,\ldots,n\}$.  One has  
$$\Int(R[\rho_1, \ldots, \rho_n]; S) = \bigoplus_{T \subseteq \nn}  \Int(R; S \cup r_{\nn-T}) \rho_T.$$  In particular, one has
$$\Int(R[\rho_1, \ldots, \rho_n]) = \bigoplus_{T \subseteq \nn}  \Int(R; r_{\nn-T}) \rho_T.$$
\end{theorem}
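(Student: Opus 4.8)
The plan is to prove the identity by induction on $n$, with Proposition \ref{mixedprop2} serving both as the base case and as the engine of the inductive step. The base case $n = 1$ is exactly Proposition \ref{mixedprop2}: expanding $\bigoplus_{T \subseteq \{1\}} \Int(R; S \cup r_{\{1\}-T})\rho_T$ into its two terms $T = \emptyset$ and $T = \{1\}$ recovers $\Int(R; S \cup \{r_1\}) + \Int(R; S)\rho_1$. (One could equally take $n = 0$ as a trivial base case, where the asserted formula reads $\Int(R;S) = \Int(R;S)$.)

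For the inductive step I would exploit the factorization $R[\rho_1,\ldots,\rho_n] = R'[\rho_n]$ with $R' = R[\rho_1,\ldots,\rho_{n-1}]$, and apply Proposition \ref{mixedprop2} over the base ring $R'$ with $r = r_n \in R \subseteq R'$. This application is legitimate because $R'$ is a ring, $r_n$ and every element of $S$ lie in $R'$, and (by the $(n-1)$-fold iterate of Lemma \ref{dualtotalquotientring}) one has $T(R') = T(R)[\rho_1,\ldots,\rho_{n-1}]$, so the operators $\Delta$ and the algebras $\Int(R';\,\cdot\,)$ are defined over $R'$. Proposition \ref{mixedprop2} then yields
\begin{equation*}
\Int(R[\rho_1,\ldots,\rho_n]; S) = \Int(R'; S \cup \{r_n\}) + \Int(R'; S)\,\rho_n .
\end{equation*}

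Next I would expand each summand by the inductive hypothesis for $R'$, applied with the multisubsets $S \cup \{r_n\}$ and $S$:
\begin{align*}
\Int(R'; S \cup \{r_n\}) &= \bigoplus_{T \subseteq \{1,\ldots,n-1\}} \Int\bigl(R; S \cup \{r_n\} \cup r_{\{1,\ldots,n-1\}-T}\bigr)\,\rho_T ,\\
\Int(R'; S)\,\rho_n &= \bigoplus_{T \subseteq \{1,\ldots,n-1\}} \Int\bigl(R; S \cup r_{\{1,\ldots,n-1\}-T}\bigr)\,\rho_{T \cup \{n\}} .
\end{align*}
The bookkeeping step is to reassemble these into a single sum over $T \subseteq \nn$, split according to whether $n \notin T$ or $n \in T$. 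For the first sum ($n \notin T$) one has $\nn - T = (\{1,\ldots,n-1\}-T) \cup \{n\}$, hence $S \cup \{r_n\} \cup r_{\{1,\ldots,n-1\}-T} = S \cup r_{\nn - T}$. For the second sum, the substitution $T' = T \cup \{n\}$ (so $n \in T'$) gives $\{1,\ldots,n-1\}-T = \nn - T'$, so the $T$-term becomes $\Int(R; S \cup r_{\nn - T'})\,\rho_{T'}$. Together these range over every $T \subseteq \nn$ exactly once, producing $\bigoplus_{T \subseteq \nn} \Int(R; S \cup r_{\nn-T})\rho_T$ as desired. The ``in particular'' statement then follows by taking $S = \emptyset$ and recalling $\Int(R;\emptyset) = \Int(R)$.

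I expect the only genuine point requiring care to be the index matching just described, together with the assertion that the sum is \emph{direct} rather than merely a sum of submodules. Directness comes for free: by the iterated Lemma \ref{dualtotalquotientring} the ambient ring $R[\rho_1,\ldots,\rho_n][X]$ sits inside $T(R)[X][\rho_1,\ldots,\rho_n]$ as a free $T(R)[X]$-module with basis $\{\rho_T\}_{T \subseteq \nn}$, so the $\rho_T$-component of any $F = \sum_T f_T \rho_T$ is well defined and membership can be tested componentwise. Since Proposition \ref{mixedprop2} already splits each single-variable step along the free basis $\{1,\rho\}$, the induction propagates directness automatically, and the combinatorial reindexing of the multisubsets $S \cup r_{\nn-T}$ is the main thing that must be verified explicitly.
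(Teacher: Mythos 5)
Your proposal is correct and follows essentially the same route as the paper's own proof: induction on $n$ with Proposition \ref{mixedprop2} applied over the base ring $R' = R[\rho_1,\ldots,\rho_{n-1}]$ to peel off $\rho_n$, then expansion by the inductive hypothesis with the multisubsets $S \cup \{r_n\}$ and $S$, and the same reindexing over $T \subseteq \nn$ according to whether $n \in T$. Your added remarks on why the application over $R'$ is legitimate and why the sum is direct (via the free $T(R)[X]$-basis $\{\rho_T\}$) are points the paper leaves implicit, and they check out.
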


\begin{proof}
The proof is by induction on $n$.  The statement for $n = 1$ is Proposition \ref{mixedprop2}.  Suppose the statement holds for $n-1$.  Then we have
\begin{align*}
\Int(R[\rho_1, \ldots, \rho_n]; S ) & = \Int(R[\rho_1,\ldots,\rho_{n-1}][\rho_n]; S) \\
&  = \Int(R[\rho_1,\ldots,\rho_{n-1}]; S \cup \{r_n\}) + \Int(R[\rho_1,\ldots,\rho_{n-1}]; S) \rho_n \\
& =\bigoplus_{T \subseteq {\bf n-1}}  \Int(R; S  \cup \{r_n\} \cup r_{{\bf n-1} -T}) \rho_T   + \left(\bigoplus_{T \subseteq {\bf n-1}}  \Int(R; S \cup r_{{\bf n-1} -T}) \rho_T \right) \rho_n  \\
& = \bigoplus_{n \notin T \subseteq \nn}  \Int(R; S  \cup r_{\nn -T}) \rho_T   + \bigoplus_{n \in T \subseteq \nn}  \Int(R; S \cup r_{\nn -T}) \rho_T  \\ 
& =  \bigoplus_{T \subseteq \nn}  \Int(R; S  \cup r_{\nn -T}) \rho_T.
\end{align*}
This completes the proof.
\end{proof}



It is natural to define  $\Int(R; S)$  for any multisubset $S$ of $R$ to be the intersection of the subrings $\Int(R;T)$ of $\Int(R)$ for all finite multisubsets $T$ of $S$.  The rings $\Int(R;S)$ have not been studied before, other than for $S \subseteq \{0,0,0,\ldots\}$.

We now provide alternative descriptions of $\Int(R[\rho])$ and $\Int(R; r)$ when $r$ is a non-zerodivisor of $R$. 
For any ring homomorphism $\varphi: R \longrightarrow S$, we let $$\Int(R; \varphi) = \{f \in \Int(R): \forall a,b \in R \ (\varphi(a) = \varphi(b) \Rightarrow \varphi(f(a) = \varphi(f(b))\}.$$ 
For any ideal $I$ of $R$, we let
$$\Int(R; I) = \{f \in \Int(R): \forall a,b \in R \ (a  \equiv b \, (\Mod I) \Rightarrow f(a)  \equiv f(b) \, (\Mod I) )\}.$$ 
Both of these rings are $R[X]$-subalgebras of $\Int(R)$.
Note that $\Int(R; \varphi)= \Int(R; \ker \varphi)$ and $\Int(R; I) = \Int(R; \pi)$, where $\pi: R \longrightarrow R/I$ is the natural projection.  Thus the two definitions are interchangeable.  Note also that $$I \Int(R) \subseteq \Int(R,I) \subseteq \Int(R; I),$$
where $\Int(R,I) = \{f \in \Int(R): f(R) \subseteq I\}$, and both $I \Int(R)$ and $\Int(R,I)$ are ideals of both of the rings $\Int(R)$ and $\Int(R; I)$.  Moreover, if $I$ is invertible, then an easy argument shows that $\Int(R,I) = I \Int(R)$.

\begin{lemma}
Let $R$ be a ring and $r$ a non-zerodivisor of $R$.  One has
$$\Int(R;rR) = \Int(R;r),$$
$$\Int(R,rR) = r\Int(R).$$
\end{lemma}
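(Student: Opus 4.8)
The plan is to exploit the single feature that makes $r$ special: since $r$ is a non-zerodivisor, it is a unit of $T(R)$, and consequently for any $c \in R$ one has $c \in rR$ if and only if $r^{-1}c \in R$. Both equalities reduce to translating membership in the relevant ring into a divisibility-by-$r$ condition and then clearing the factor of $r$; throughout, both sides sit inside the common ambient ring $\Int(R)$, so in each case it suffices to compare the extra defining conditions.

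For the second equality I would argue by double inclusion. The inclusion $r\Int(R) \subseteq \Int(R,rR)$ is immediate, since $g \in \Int(R)$ gives $rg(R) \subseteq rR \subseteq R$, so $rg \in \Int(R)$ with values in $rR$. For the reverse, given $f \in \Int(R,rR)$ I would set $g = r^{-1}f \in T(R)[X]$; then $g(R) = r^{-1}f(R) \subseteq r^{-1}(rR) = R$, so $g \in \Int(R)$ and $f = rg \in r\Int(R)$. This is precisely the ``easy argument'' for invertible ideals alluded to just before the statement, specialized to the principal ideal $rR$, which is invertible because $r$ is a non-zerodivisor.

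For the first equality the real content is rewriting the congruence condition defining $\Int(R;rR)$ in terms of the operator $\Delta$. I would first note that a pair $a,b \in R$ satisfies $a \equiv b \pmod{rR}$ exactly when $a = x+ry$ and $b = x$ for some $x,y \in R$; hence, for $f \in \Int(R)$, membership $f \in \Int(R;rR)$ is equivalent to $f(x+ry)-f(x) \in rR$ for all $x,y \in R$. Then, by the defining identity $f(X+Y)-f(X) = Y\,\Delta_Y f(X)$ evaluated at $X=x$, $Y=ry$, one gets $f(x+ry)-f(x) = ry\,\Delta_{ry}f(x) = r\bigl(y\,\Delta_{yr}f(x)\bigr)$, using commutativity to write $\Delta_{ry}=\Delta_{yr}$. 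Dividing by the non-zerodivisor $r$ turns the condition $f(x+ry)-f(x)\in rR$ into $y\,\Delta_{yr}f(x)\in R$; quantifying over all $x$ and $y$ identifies this with the requirement that $y\,\Delta_{yr}f(X)\in\Int(R)$ for all $y\in R$, which is the definition of $\Int(R;r)$.

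The one place where genuine care is needed---and the step I expect to be the main obstacle---is the equivalence between $f(x+ry)-f(x)\in rR$ and $y\,\Delta_{yr}f(x)\in R$. This is exactly where the hypothesis that $r$ is a non-zerodivisor is indispensable: one needs that division by $r$ inside $T(R)$ is unambiguous and that it detects membership in $rR$, namely that for $c \in R$ one has $c \in rR$ if and only if $r^{-1}c \in R$. Everything else is routine bookkeeping with the two definitions.
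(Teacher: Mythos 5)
Your proof is correct. The paper actually states this lemma without proof (the second equality is already covered by the remark just before it that $\Int(R,I) = I\Int(R)$ for invertible $I$, and $rR$ is invertible since $r$ is a non-zerodivisor), and your argument---reducing both equalities to the fact that $r$ is a unit of $T(R)$, so that $c \in rR$ if and only if $r^{-1}c \in R$, combined with the identity $f(x+ry)-f(x) = r\,\bigl(y\,\Delta_{yr}f(x)\bigr)$ to match the congruence condition defining $\Int(R;rR)$ with the operator condition defining $\Int(R;r)$---is exactly the intended routine verification.
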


We let $R \times_A S = \{(r,s) \in R \times S: \varphi(r) = \psi(s)\}$ denote the pullback of a fixed diagram $R \stackrel{\varphi}{\longrightarrow} A \stackrel{\psi}{\longleftarrow}  S$ of ring homomorphisms.   We note the following.

\begin{lemma}\label{hyperboliciso}
Let $R$ be a ring and $r \in R$.  The map $$R[\rho] \longrightarrow R \times_{R/rR} R$$ acting by $x+y\rho \longmapsto (x,x+ry)$ is a surjective ring  homomorphism with kernel equal to $\operatorname{Ann}_R(r)\rho$.  In particular, if $r$ is a non-zerodivisor of $R$, then the map is an isomorphism with inverse acting by $(x,y) \longmapsto  x+ \frac{y-x}{r} \rho$.
\end{lemma}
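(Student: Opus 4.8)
The plan is to verify directly that the stated map, which I will call $\Phi \colon R[\rho] \longrightarrow R \times_{R/rR} R$, is a ring homomorphism, then compute its kernel and establish surjectivity; the isomorphism statement will follow immediately. First I would describe the target concretely: since the pullback is taken over the two natural projections $\pi \colon R \longrightarrow R/rR$, one has $R \times_{R/rR} R = \{(x,y) \in R \times R : x - y \in rR\}$. Because $(x+ry) - x = ry \in rR$, the image $(x, x+ry)$ of $x + y\rho$ indeed lands in this pullback, so $\Phi$ is well defined as a map into $R \times_{R/rR} R$.

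Next I would check that $\Phi$ is a ring homomorphism. Additivity and preservation of $1$ are immediate from the componentwise operations on the pullback. The one computation with any substance is multiplicativity: using the defining relation $\rho^2 = r\rho$ (from $\rho(\rho-r)=0$), one has $(x+y\rho)(x'+y'\rho) = xx' + (xy' + x'y + ryy')\rho$, which $\Phi$ sends to $(xx',\, xx' + r(xy' + x'y + ryy'))$. Expanding the componentwise product $(x, x+ry)(x', x'+ry')$ and simplifying its second coordinate to $xx' + r(xy' + x'y + ryy')$ shows the two agree. This matching is the main, and essentially the only, point requiring care.

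Then I would read off the kernel and surjectivity. From $\Phi(x + y\rho) = (x, x+ry)$, membership in the kernel forces $x = 0$ and $ry = 0$, that is, $x = 0$ and $y \in \operatorname{Ann}_R(r)$; hence $\ker \Phi = \operatorname{Ann}_R(r)\rho$. For surjectivity, given $(x,y)$ in the pullback one has $y - x = rz$ for some $z \in R$, and then $x + z\rho \longmapsto (x,\, x+rz) = (x,y)$.

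Finally, when $r$ is a non-zerodivisor, $\operatorname{Ann}_R(r) = 0$ forces $\ker \Phi = 0$, so $\Phi$ is injective, and together with surjectivity it is an isomorphism. In this case the element $z$ satisfying $y - x = rz$ is uniquely determined as $z = (y-x)/r$, which yields the stated inverse $(x,y) \longmapsto x + \frac{y-x}{r}\rho$.
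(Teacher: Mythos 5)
Your proof is correct: the paper states Lemma \ref{hyperboliciso} without proof (it is introduced only with ``We note the following''), and your direct verification---identifying the pullback as $\{(x,y) \in R \times R : x - y \in rR\}$, checking multiplicativity via $\rho^2 = r\rho$, and reading off the kernel, surjectivity, and the inverse in the non-zerodivisor case---is exactly the routine argument the paper leaves to the reader. No gaps; the one step with content (matching $(x,x+ry)(x',x'+ry')$ against the image of the product) is carried out correctly.
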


\begin{proposition}\label{fibercor}
Let $R$ be a ring and $r \in R$.  The map $$\Int(R[\rho]) \longrightarrow \Int(R; r) \times_{\Int(R; r)/r\Int(R)} \Int(R; r)$$ acting by $f + g\rho \longmapsto (f,f+rg)$ is a surjective ring isomorphism with kernel $\operatorname{Ann}_{\Int(R)}(r)\rho$.   In particular, if $r$ is a non-zerodivisor of $R$, then the map is an isomorphism with inverse acting by $(f,g) \longmapsto f + \frac{g-f}{r}\rho$. 
\end{proposition}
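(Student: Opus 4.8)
The plan is to realize the map as the restriction of a transparent ring homomorphism and then mimic the proof of Lemma~\ref{hyperboliciso} at the level of integer-valued polynomials. Since $T(T-r)$ has the two ``roots'' $0$ and $r$ in $R$, the assignments $\rho \mapsto 0$ and $\rho \mapsto r$ define two $R$-algebra homomorphisms $R[\rho] \to R$, and together they give the ring homomorphism of Lemma~\ref{hyperboliciso}. First I would extend these to a ring homomorphism $\Phi \colon T(R)[\rho][X] \to T(R)[X] \times T(R)[X]$ sending $F \mapsto (F|_{\rho = 0},\, F|_{\rho = r})$. Writing $F = f + g\rho$ with $f,g \in T(R)[X]$ (a unique expression, since $\{1,\rho\}$ is a $T(R)[X]$-basis of $T(R)[\rho][X]$), one has $\Phi(F) = (f,\, f+rg)$. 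Restricting $\Phi$ to $\Int(R[\rho])$ then gives a ring homomorphism with the stated formula for free, so the only thing left to check about the homomorphism property is that it lands in the prescribed fiber product.

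For well-definedness into the pullback I would first verify that $r\Int(R) \subseteq \Int(R;r)$, and that $r\Int(R)$ is an ideal of $\Int(R;r)$, so that the quotient $\Int(R;r)/r\Int(R)$ appearing in the statement makes sense. The containment follows from the identity $ry\,\Delta_{yr}h(X) = h(X+yr) - h(X)$: for $h \in \Int(R)$ and $x \in R$ the right side evaluates into $R$, so $y\,\Delta_{yr}(rh)(X) = ry\,\Delta_{yr}h(X) \in \Int(R)$, whence $rh \in \Int(R;r)$. Given $F = f+g\rho \in \Int(R[\rho])$, Proposition~\ref{hyperbolicint} supplies $f \in \Int(R;r)$ and $g \in \Int(R)$; then $f + rg \in \Int(R;r)$ (since $rg \in r\Int(R) \subseteq \Int(R;r)$) and $f - (f+rg) = -rg \in r\Int(R)$, so both coordinates lie in $\Int(R;r)$ and agree modulo $r\Int(R)$. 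Thus $\Phi$ maps $\Int(R[\rho])$ into the fiber product.

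Next I would establish surjectivity and compute the kernel. For surjectivity, take $(p,q)$ in the fiber product, so $p, q \in \Int(R;r)$ with $q - p \in r\Int(R)$; write $q - p = rh$ with $h \in \Int(R)$ and set $f = p$, $g = h$. By Proposition~\ref{hyperbolicint} one has $f + g\rho \in \Int(R[\rho])$, and it maps to $(p,\, p+rh) = (p,q)$. For the kernel, uniqueness of the expression $f + g\rho$ shows that $\Phi(f+g\rho) = (0,0)$ forces $f = 0$ and $rg = 0$, so the kernel is exactly $\operatorname{Ann}_{\Int(R)}(r)\,\rho$, establishing that the map is a surjective ring homomorphism with the claimed kernel.

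Finally, for the non-zerodivisor case: if $r$ is a non-zerodivisor of $R$ then $r$ is a unit of $T(R)$, so $\operatorname{Ann}_{\Int(R)}(r) = 0$ and the map is injective, hence an isomorphism. Reading off the inverse, a pair $(f,g)$ in the fiber product satisfies $g - f = rh$ for a unique $h = \frac{g-f}{r} \in T(R)[X]$, and $h \in \Int(R)$ because $g - f \in r\Int(R)$; thus the inverse is $(f,g)\mapsto f + \frac{g-f}{r}\rho$, as claimed. I expect the only real friction to be the verification that $r\Int(R) \subseteq \Int(R;r)$, which is what makes the target fiber product the correct object; once that is in hand, everything else is a direct transcription of Lemma~\ref{hyperboliciso} together with the decomposition of Proposition~\ref{hyperbolicint}.
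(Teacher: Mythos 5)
Your proof is correct and follows exactly the route the paper intends: the paper states Proposition \ref{fibercor} without proof, as the integer-valued-polynomial analogue of Lemma \ref{hyperboliciso} obtained by combining that lemma's two evaluation maps $\rho \mapsto 0$, $\rho \mapsto r$ with the decomposition $\Int(R[\rho]) = \Int(R;r) + \Int(R)\rho$ of Proposition \ref{hyperbolicint}. Your verification that $r\Int(R) \subseteq \Int(R;r)$, via the identity $ry\,\Delta_{yr}h(X) = h(X+yr)-h(X)$, is precisely the detail needed to make the fiber product well defined, and the rest (surjectivity, kernel, and the inverse when $r$ is a non-zerodivisor) is carried out correctly.
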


\begin{example}
If $r$ is a unit of $R$, then $R[\rho] \longrightarrow R \times_{R/rR} R = R \times R$ is an isomorphism,  one has $\Int(R; rR) = \Int(R; R) = \Int(R)$, and therefore $\Int(R[\rho]) = \Int(R) + \Int(R)\rho  = \Int(R)[\rho] \cong \Int(R) \times \Int(R) = \Int(R) \times_{\Int(R)/r\Int(R)} \Int(R)$.
\end{example}

Let $I$ be an ideal of $R$, and let $C$ be a complete system of representatives in $R$ of the ring $R/I$.  Then the map $$\Phi: \Int(R; I) \longrightarrow \prod_{c \in C} R/I$$ acting by $f \longmapsto (f(c) \Mod I)_{c \in C}$ has kernel $\Int(R,I)$ and therefore one has an inclusion
$$\Int(R; I)/\Int(R,I) \longrightarrow \prod_{c \in C} R/I.$$ (The inclusion is an isomorphism if $R$ is a Dedekind domain with finite residue fields and $I$ is nonzero.)  Let us denote the image of the map $\Phi$ by $\Int(R:I)$, so that $\Int(R; I)/\Int(R,I) \cong \Int(R:I)$.  For each $\cc \in \Int(R:I)$, let $f_\cc \in \Int(R;I)$ with $\Phi(f_\cc) = \cc$.  Then $ \{f_\cc: \cc \in \Int(R:I)\}$ is a system of representatives in $\Int(R; I)$ of $\Int(R;I)/\Int(R,I)$, and therefore one has
$$\Int(R;I) = \coprod_{\cc \in \Int(R:I)} (f_\cc + \Int(R,I)) = \{f_\cc: \cc \in \Int(R:I)\}+\Int(R,I).$$
Therefore, if $r$ is a non-zerodivisor of $R$, then by Proposition \ref{hyperbolicint} one has
\begin{align*}
\Int(R[\rho])  & = \Int(R; rR) + \rho\Int(R)   \\
& = \{f_\cc: \cc \in \Int(R:rR)\}+\Int(R,rR) + \rho\Int(R)   \\
& =  \{f_\cc: \cc \in \Int(R:rR)\}+(r,\rho)\Int(R),
\end{align*}
and by Proposition \ref{fibercor} one has an isomorphism
$$\Int(R[\rho]) \longrightarrow \Int(R; rR) \times_{\Int(R: rR)} \Int(R; rR).$$
Note the special case where $\{f_\cc: \cc \in \Int(R:rR)\} \subseteq R[X]$, from which it follows that
$$\Int(R; rR) = R[X] + r\Int(R)$$ and
$$\Int(R[\rho]) = R[X] + (r,\rho)\Int(R).$$

\begin{example}
The map
$$\Phi: \Int(\ZZ; 2\ZZ) \longrightarrow \ZZ/2\ZZ \times \ZZ/2\ZZ$$
acting by $f \longmapsto (f(0),f(1))$ is surjective, and one has
$$\Int(\ZZ; 2\ZZ) =  \{0,1,X,1+X\}+2 \Int(\ZZ) = \ZZ[X]+2\Int(\ZZ).$$
Therefore, one has
\begin{align*}
\Int(\ZZ[j])  & = \{0,1,X,1+X\} + (2,1+j)\Int(\ZZ) \\
& = \ZZ[X] + (2,1+j)\Int(\ZZ).
\end{align*}
\end{example}

\begin{example}
Let $p$ be a prime number and $\ZZ[\rho] = \ZZ[T]/(T(T-p))$, where $\rho$ is the image of $T$.  The map
$$\Phi: \Int(\ZZ; p\ZZ) \longrightarrow (\ZZ/p\ZZ)^p$$
acting by $f \longmapsto (f(0), f(1), \ldots, f(p-1))$ is surjective and in fact is bijective when restricted to the set $$\{a_0+a_1X + a_2X(X-1) + \cdots + a_{p-1} X(X-1)\cdots (X-p+1): a_i \in \{0,1,\ldots,p-1\}\} \subseteq \ZZ[X].$$
Therefore, since $(k!,p) = 1$ for all $0 \leq k < p$, one has
$$\Int(\ZZ; p\ZZ) = \ZZ[X]+ p \Int(\ZZ) = \bigoplus_{k = 0}^p  \ZZ {X \choose k} \oplus \bigoplus_{k = p+1}^\infty p\ZZ {X \choose k}$$
and 
$$\Int(\ZZ[\rho]) = \ZZ[X] + (p, \rho)\Int(\ZZ).$$
\end{example}

\begin{example}
One has $2{X \choose 4}+{X \choose 2}, 2{X \choose 5}+{X \choose 3} \in \Int(\ZZ; 4\ZZ)$, and therefore $$\Int(\ZZ; 4\ZZ) \supsetneq \ZZ[X] + 4 \Int(\ZZ).$$  It seems reasonable to conjecture that  $$\Int(\ZZ; 4\ZZ)  =  \ZZ \oplus \ZZ X \oplus \bigoplus_{k = 2}^3 \ZZ 2 {X \choose  k} \oplus \bigoplus_{k = 4}^5 \ZZ\left(2{X \choose k}+{X \choose k-2} \right)  \oplus \bigoplus_{k = 6}^\infty \ZZ 4 {X \choose k}.$$
\end{example}

\begin{problem}
Compute $\Int(\ZZ;4)$, or more generally $\Int(\ZZ;n)$ for composite $n$.
\end{problem}

\section{Integer-valued polynomials over the idealization of a module}

Let $R$ be a ring and $M$ a (left) $R$-module.  One defines the {\it (Nagata) idealization of $M$ over $R$} to be the $R$-algebra $R(+)M$ whose $R$-module structure is the $R$-module $R \oplus M$ and whose multiplication is given by $$(r,m)(s,n) = (rs, rn+sm)$$ for all $(r,m), (s, n) \in R \oplus M$.  (It is defined alternatively as the quotient of the symmetric algebra $S(M) = \bigoplus_{k= 0}^\infty S^k(M)$ of $M$ over $R$ by the ideal $\bigoplus_{k= 2}^\infty S^k(M)$.)  It is called the ``idealization'' of $M$ because the $R$-module $M$ becomes an ideal $0(+)M$ of the idealization $R(+)M$.   

Idealization relates to Proposition \ref{dualintegervaluedpolynomials} in two ways.  First, note that $R[\varepsilon]$ is isomorphic to the idealization $R(+)R$ over $R$ of the free rank one $R$-module $R$.   Indeed, one can easily see that the map $R(+)R \longrightarrow R[\varepsilon]$ acting by $(a,b) \longmapsto a+b\varepsilon$ is an isomorphism of $R$-algebras.  Second, note  that, since $\Int^{(1)}(R) \subseteq \Int(R)$, we may consider $\Int(R)$ as an $\Int^{(1)}(R)$-algebra, hence as an $\Int^{(1)}(R)$-module, and then we have an $\Int^{(1)}(R)$-algebra isomorphism
$$\Int^{(1)}(R) (+) \Int(R) \longrightarrow \Int^{(1)}(R) + \Int(R)\varepsilon = \Int(R[\varepsilon])$$
acting by $(f,g) \longmapsto f + g \varepsilon$.  Thus, Proposition  \ref{dualintegervaluedpolynomials} implies the following.

\begin{corollary}\label{dualintegervaluedpolynomialscor}
One has canonical isomorphisms
$$\Int(R[\varepsilon]) \cong \Int(R(+)R) \cong \Int^{(1)}(R) \, (+) \Int(R)$$
for any ring $R$.
\end{corollary}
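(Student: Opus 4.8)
The plan is to assemble two isomorphisms, each of which is essentially supplied by the discussion preceding the statement. For the first isomorphism $\Int(R[\varepsilon]) \cong \Int(R(+)R)$, I would start from the $R$-algebra isomorphism $R(+)R \cong R[\varepsilon]$ given by $(a,b) \longmapsto a + b\varepsilon$, noted above, and then invoke the general principle that a ring isomorphism $\varphi\colon R_1 \to R_2$ induces an isomorphism $\Int(R_1) \cong \Int(R_2)$. To justify the latter I would observe that $\varphi$ carries $R_1^{\reg}$ bijectively onto $R_2^{\reg}$, hence extends to an isomorphism $T(R_1) \to T(R_2)$ of total quotient rings, which in turn extends coefficientwise (fixing $X$) to an isomorphism $T(R_1)[X] \to T(R_2)[X]$; the identity $\varphi(f)(\varphi(a)) = \varphi(f(a))$ (where $\varphi$ also denotes the induced maps on $T(R_1)$ and $T(R_1)[X]$) then shows that this isomorphism restricts to a bijection $\Int(R_1) \to \Int(R_2)$.

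For the second isomorphism $\Int(R[\varepsilon]) \cong \Int^{(1)}(R)\,(+)\,\Int(R)$, I would use Proposition \ref{dualintegervaluedpolynomials} to identify $\Int(R[\varepsilon])$ with $\Int^{(1)}(R) + \Int(R)\varepsilon$, and then check directly that the map
$$\Phi\colon \Int^{(1)}(R)\,(+)\,\Int(R) \longrightarrow \Int(R[\varepsilon]), \qquad (f,g) \longmapsto f + g\varepsilon,$$
is an isomorphism of $\Int^{(1)}(R)$-algebras. Additivity and $\Int^{(1)}(R)$-linearity are immediate, and multiplicativity follows from $\varepsilon^2 = 0$: the product $(f+g\varepsilon)(f'+g'\varepsilon) = ff' + (fg'+f'g)\varepsilon$ matches the idealization product $(f,g)(f',g') = (ff',\,fg'+f'g)$. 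Bijectivity follows from the uniqueness of the representation $F = f + g\varepsilon$ over $T(R)$ (Lemma \ref{dualpolynomialfunctions}), which shows that $\Int^{(1)}(R) + \Int(R)\varepsilon$ is a direct sum and hence that $\Phi$ is injective, while surjectivity is exactly the content of Proposition \ref{dualintegervaluedpolynomials}.

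Since the whole argument is a matter of combining previously established facts, there is no serious obstacle; the two points that most deserve care are the functoriality of $\Int$ under ring isomorphisms (i.e.\ that the construction commutes with forming the total quotient ring) and the verification that the sum $\Int^{(1)}(R) + \Int(R)\varepsilon$ is direct, which together yield the injectivity and surjectivity of $\Phi$. Finally, to justify the word ``canonical,'' I would observe that none of the maps $(a,b)\mapsto a+b\varepsilon$, the induced $\Int(R(+)R) \cong \Int(R[\varepsilon])$, and $(f,g) \mapsto f+g\varepsilon$ involve any arbitrary choices, so the resulting composite isomorphisms are natural in $R$.
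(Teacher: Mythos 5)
Your proposal is correct and follows essentially the same route as the paper, which derives the corollary from the $R$-algebra isomorphism $R(+)R \cong R[\varepsilon]$, $(a,b) \mapsto a+b\varepsilon$, together with the $\Int^{(1)}(R)$-algebra isomorphism $\Int^{(1)}(R)\,(+)\,\Int(R) \to \Int^{(1)}(R) + \Int(R)\varepsilon = \Int(R[\varepsilon])$, $(f,g) \mapsto f+g\varepsilon$, supplied by Proposition \ref{dualintegervaluedpolynomials}. The only difference is that you spell out details the paper leaves implicit (functoriality of $\Int$ under ring isomorphisms and directness of the sum), which is a harmless elaboration.
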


This motivates the following problem.

\begin{problem}
Let $R$ be a ring and $M$ an $R$-module.  Describe $\Int^{(k)}(R(+)M)$ for all nonnegative integers $k$.
\end{problem}

 Let $M$ be an $R$-module.   An element $a$ of $R$ is said to be a {\it non-zerodivisor on $M$} if $am = 0$ implies $m = 0$ for all $m \in M$.    Let $T(M)$ denote the localization $U^{-1}M$, where $U$ is the multiplicative subset $M^{\reg}$ of $R$ consisting of all non-zerodivisors on $M$.  Note that the canonical $R$-module homomorphism $M \longrightarrow T(M)$ is an inclusion, so we consider $M$ as an $R$-submodule of $T(M)$.  
 We let $M[X]  = \bigoplus_{k = 0}^\infty X^kM$ denote the set of all polyomials with coefficients in $M$, that is, expressions of the form $\sum_{k = 0}^n X^k m_k$ with $n$ a non-negative integer and $m_k \in M$ for all $k$.  Note that the $R$-module $M[X]$ is naturally an $R[X]$-module under the $R[X]$-scalar multiplication defined by $$\left(\sum_{j = 0}^m f_jX^j\right)\left( \sum_{k = 0}^n X^km_k\right) = \sum_{i = 0}^{m+n} X^i \left(\sum_{j+k = i} f_j m_k\right)$$
for all $f_j \in R$ and $m_k \in M$.   Observe that every $h = \sum_{k = 0}^n X^k m_k \in M[X]$ determines a function $R \longrightarrow M$ acting by $x \longmapsto h(x) = \sum_{k = 0}^n x^k m_k$.   We let
$$\Int(R,M) = \{h \in T(M)[X]: h(R) \subseteq M\},$$
which we call the {\it module of integer-valued polynomials on $M$ over $R$}.  We also let 
$$\Int_M(R) = \{f \in U^{-1}R[X]: f(R)M \subseteq M\},$$
which we call the {\it ring of integer-valued polynomials on $R$ over $M$}.  Note that $\Int_M(R)$ is an $R[X]$-subalgebra of $U^{-1}R[X]$.  Moreover, $\Int(R,M)$ is a module over the ring $\Int_M(R)$, since $T(M) = U^{-1}M[X]$ is a module over the ring $U^{-1}R[X]$ and if $f \in \Int_M(R)$ and $h \in \Int(R,M)$ then $fh \in \Int(R,M)$.   Note also that
$$M[X] \subseteq \Int(R,M) \subseteq T(M)[X]$$
and
$$(R \cdot 1)[X] \subseteq \Int_M(R) \subseteq U^{-1}R[X],$$
where $R \cdot 1$ is the image of $R$ in $U^{-1}R$.

\begin{remark}
If $I$ is an ideal of $R$, then $M = I$ is also an $R$-module and the notation  $\Int(R,M)$ conflicts with the notation $\Int(R,I)$ introduced in Section 3.  We now use the notation in the former sense rather than the latter.
\end{remark}

For all $z = (x,m) \in R(+)M$ we write $z = x+m\varepsilon$.  Note, however, that there is no element ``$\varepsilon$'' of $R(+)M$ since there is no ``1'' in $M$.     If $f \in R[X]$ and $h \in M[X]$, then we write $(f,h) = f + h \varepsilon$ for the polynomial $\sum_{i = 0}^{< \infty} X^i   (f_i+h_i \varepsilon)$ of $(R(+)M)[X]$.     Note that $M[X]$ is an ideal of the $R[X]$-algebra $(R(+)M)[X]$ when identified with $(0(+)M)[X]$.    Let $k$ be a nonnegative integer.  We define $$\Int^{(k)}_M(R) = \{f \in (M^{\operatorname{reg}})^{-1}R[X]: f, f',f'', \ldots, f^{(k)} \in \Int_M(R)\},$$
which is an $R[X]$-subalgebra of $T(R)[X]$.  We also define
$$\Int^{(k)}(R,M) = \{f \in T(M)[X]: f, f', f'', \ldots, f^{(k)} \in \Int(R,M)\},$$
which is a module over the ring $\Int^{(l)}_M(R)$ for $l \geq k$.

The following two lemmas and proposition are straightforward to verify.
\begin{lemma}
Let $R$ be a ring and $M$ an $R$-module, and let $(x,m) \in R(+)M$.  Then $(x,m)$ is a non-zerodivisor of $R(+)M$ if and only if $x$ is a non-zerodivisor on $M$ and $\ann_R(x) \cap \ann_R(m) = (0)$.  Moreover, $(x,m)$ is a unit of $R(+)M$ if and only if $x$ is a unit of $R$.
\end{lemma}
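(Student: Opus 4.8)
The plan is to reduce everything to the multiplication rule in the idealization. For $(x,m),(s,p)\in R(+)M$ one has $(x,m)(s,p)=(xs,\,xp+sm)$, so the equation $(x,m)(s,p)=(0,0)$ is equivalent to the pair $xs=0$ and $xp+sm=0$, while $(x,m)(s,p)=(1,0)$ is equivalent to $xs=1$ and $xp+sm=0$. Thus $(x,m)$ is a non-zerodivisor precisely when the only solution $(s,p)$ of $xs=0,\ xp+sm=0$ is the trivial one, and it is a unit precisely when these equations with $1$ in place of $0$ in the first slot are solvable. The whole lemma is then an analysis of these componentwise equations.

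For the non-zerodivisor statement I would prove both implications directly. Assuming $x$ is a non-zerodivisor on $M$ and $\ann_R(x)\cap\ann_R(m)=(0)$, I take an arbitrary solution of $xs=0$ and $xp+sm=0$; multiplying the second equation by $x$ and using $xs=0$ gives $x^2p=0$, and since a non-zerodivisor on $M$ has all of its powers again non-zerodivisors on $M$, this forces $p=0$. Then $sm=0$, and together with $xs=0$ this gives $s\in\ann_R(x)\cap\ann_R(m)=(0)$, so $s=0$. For the converse, assuming $(x,m)$ is a non-zerodivisor, I would test it against two specific elements: against $(0,p)$ with $xp=0$, which yields $(x,m)(0,p)=(0,xp)=(0,0)$ and hence $p=0$, so $x$ is a non-zerodivisor on $M$; and against $(s,0)$ with $s\in\ann_R(x)\cap\ann_R(m)$, which yields $(x,m)(s,0)=(xs,sm)=(0,0)$ and hence $s=0$, so the intersection is trivial.

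For the unit statement, the first component of $(x,m)(s,p)=(1,0)$ already forces $xs=1$, so $x$ must be a unit of $R$; conversely, if $x$ is a unit I would simply exhibit the inverse $(x,m)^{-1}=(x^{-1},\,-x^{-2}m)$ and verify it against the multiplication rule. The only step that is not pure substitution is the forward direction of the non-zerodivisor characterization: the key move is to multiply $xp+sm=0$ by $x$ in order to decouple $p$ from $s$, together with the observation that powers of a non-zerodivisor on $M$ remain non-zerodivisors on $M$. Everything else is a direct, routine verification from the componentwise equations, which is why the lemma is stated as being straightforward to check.
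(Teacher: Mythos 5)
Your proof is correct. The paper offers no argument for this lemma at all (it is one of the statements declared ``straightforward to verify''), and your componentwise verification---including the one step that is not pure substitution, namely multiplying $xp+sm=0$ by $x$ to get $x^2p=0$ and using that powers of a non-zerodivisor on $M$ are again non-zerodivisors on $M$---is exactly the routine check the paper has in mind.
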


\begin{proposition}
Let $R$ be a ring and $M$ an $R$-module such that every non-zerodivisor on $M$ is a non-zerodivisor of $R$.  Let $(x,m) \in R(+)M$.  Then $(x,m)$ is a non-zerodivisor of $R(+)M$ if and only if $x$ is a non-zerodivisor on $M$, and $(x,m)$ is a unit of $R(+)M$ if and only if $x$ is a unit of $R$.  Moreover, the total quotient ring $T(R(+)M)$ of $R(+)M$ is given by $U^{-1}R(+)U^{-1}M$, where $U$ is the multiplicative subset of $R$ consisting of all non-zerodivisors on $M$.
\end{proposition}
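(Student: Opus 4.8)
The plan is to deduce the first two statements from the preceding lemma, where the regularity hypothesis does the essential work, and then to compute the total quotient ring by imitating the proof of Lemma~\ref{dualtotalquotientring} for the dual numbers. For the non-zerodivisor characterization, recall from the preceding lemma that $(x,m)$ is a non-zerodivisor of $R(+)M$ if and only if $x$ is a non-zerodivisor on $M$ and $\ann_R(x)\cap\ann_R(m)=(0)$. The forward implication of the claim is then immediate. For the reverse implication, I would suppose $x$ is a non-zerodivisor on $M$; the hypothesis forces $x$ to be a non-zerodivisor of $R$, so $\ann_R(x)=(0)$ and the condition $\ann_R(x)\cap\ann_R(m)=(0)$ holds automatically. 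The unit characterization is already contained in the preceding lemma and needs no hypothesis; concretely, $(x,m)^{-1}=(x^{-1},-x^{-2}m)$ when $x$ is a unit of $R$, while the projection $R(+)M\to R$ forces $x$ to be a unit whenever $(x,m)$ is.

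For the total quotient ring, I would first record that, by the characterization just obtained, the set of non-zerodivisors of $R(+)M$ is exactly $\{(x,m):x\in U\}$, where $U=M^{\reg}$. Since $U$ consists of non-zerodivisors of $R$, the localization $U^{-1}R$ is a subring of $T(R)$ and $U^{-1}M$ is a $U^{-1}R$-module, so the idealization $U^{-1}R(+)U^{-1}M$ is well defined and the canonical map $\phi\colon R(+)M\to U^{-1}R(+)U^{-1}M$ sending $(r,m)$ to $(r/1,m/1)$ is an injective ring homomorphism. If $(x,m)$ is a non-zerodivisor of $R(+)M$, then $x\in U$ is a unit of $U^{-1}R$, so by the unit characterization applied inside $U^{-1}R(+)U^{-1}M$ the element $\phi(x,m)$ is a unit. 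Hence $\phi$ carries non-zerodivisors to units, and by the universal property of the total quotient ring it factors as $\phi=\bar\phi\circ\iota$ for a canonical homomorphism $\bar\phi\colon T(R(+)M)\to U^{-1}R(+)U^{-1}M$, where $\iota\colon R(+)M\to T(R(+)M)$.

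It then remains to verify that $\bar\phi$ is bijective. For surjectivity I would write a typical element of $U^{-1}R(+)U^{-1}M$ as $(xu^{-1},m'v^{-1})$ with $x\in R$, $m'\in M$, and $u,v\in U$, and observe, in the spirit of the fraction trick of Lemma~\ref{dualtotalquotientring}, that $(uv,0)$ is a non-zerodivisor of $R(+)M$ while $(xu^{-1},m'v^{-1})(uv,0)=(xv,um')$ is the image of an element of $R(+)M$; thus $(xu^{-1},m'v^{-1})=\bar\phi\big(\iota(xv,um')\,\iota(uv,0)^{-1}\big)$. For injectivity, any element of $T(R(+)M)$ killed by $\bar\phi$ reduces to some $\iota(a,p)$ with $a/1=0$ in $U^{-1}R$ and $p/1=0$ in $U^{-1}M$; choosing $u,v\in U$ with $ua=0$ and $vp=0$, the non-zerodivisor $(uv,0)$ annihilates $(a,p)$, so $\iota(a,p)=0$ and the original element is already $0$.

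The manipulations above---the idealization product $(r,m)(s,n)=(rs,rn+sm)$ and the displayed fraction identities---are mechanical. The one point requiring genuine care is the interplay of the two localizations: the hypothesis that every non-zerodivisor on $M$ is a non-zerodivisor of $R$ is precisely what makes $U^{-1}M$ a module over the subring $U^{-1}R$ of $T(R)$ and, simultaneously, collapses the annihilator condition of the preceding lemma so that the non-zerodivisors of $R(+)M$ take the clean form $\{(x,m):x\in U\}$; without it the total quotient ring would not simplify to $U^{-1}R(+)U^{-1}M$. I expect the injectivity of $\bar\phi$---confirming that simultaneous vanishing of both components after localization already forces vanishing in $T(R(+)M)$ via a single common denominator $(uv,0)$---to be the step most in need of explicit justification.
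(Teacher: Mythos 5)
Your proof is correct. The paper gives no argument for this proposition at all---it is one of the statements it declares ``straightforward to verify''---and your verification (deriving both characterizations from the preceding lemma, with the hypothesis collapsing the annihilator condition, then identifying $T(R(+)M)$ with $U^{-1}R\,(+)\,U^{-1}M$ via the universal property of localization plus the denominator-clearing computation) is precisely the intended argument, mirroring the paper's own proof that $T(R[\varepsilon])=T(R)[\varepsilon]$.
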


\begin{lemma}\label{idealizationpolynomialfunctions}
Let $R$ be a ring and $M$ an $R$-module.  Every $F \in (R(+)M)[X]$ can be written uniquely in the form $F = f + h\varepsilon$, where $f \in R[X]$ and $h \in M[X]$.  Moreover, one has
$$F(x + m \varepsilon) = f(x) + (f'(x)m+h(x)) \varepsilon$$
for all $x\in R$ and all $m \in M$.
\end{lemma}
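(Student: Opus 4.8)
The plan is to verify the stated formula by direct computation, mirroring the structure of the proof of Lemma \ref{dualpolynomialfunctions}, which treats the special case $M = R$. First I would establish the decomposition $(R(+)M)[X] = R[X] + M[X]\varepsilon$. Since $R(+)M = R \oplus M$ as an $R$-module, each coefficient of $F$ lies in $R \oplus M$, so I can group the ``$R$-parts'' of the coefficients into a polynomial $f \in R[X]$ and the ``$M$-parts'' into a polynomial $h \in M[X]$, using the notation $(f,h) = f + h\varepsilon$ already fixed in the excerpt. Uniqueness follows because the decomposition $R(+)M = R \oplus M$ is a direct sum of $R$-modules, so the representation $F = f + h\varepsilon$ is forced coefficient-by-coefficient.

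For the evaluation formula, the key observation is that $\varepsilon^2 = 0$ in the idealization: concretely, $(0,m)(0,n) = (0,0)$ for all $m,n \in M$, so any product of two elements of $0(+)M$ vanishes. The efficient route is to first handle a monomial $F = X^k$ (regarded as $f = X^k$, $h = 0$) and evaluate at $x + m\varepsilon = (x,m)$. Using the multiplication rule $(x,m)(y,n) = (xy, xn + ym)$ one checks by induction on $k$ that $(x,m)^k = (x^k, k x^{k-1} m) = x^k + k x^{k-1} m\,\varepsilon$, which is exactly $f(x) + f'(x) m\,\varepsilon$ for $f = X^k$; this is the same binomial computation as in Lemma \ref{dualpolynomialfunctions}, now with the coefficient $m$ living in $M$ rather than $R$. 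Extending by $R$-linearity in the coefficients of $f$ gives $f(x + m\varepsilon) = f(x) + f'(x) m\,\varepsilon$ for an arbitrary $f \in R[X]$.

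Next I would handle the $h\varepsilon$ term. For $h \in M[X]$, I evaluate $h(x + m\varepsilon)\varepsilon$. Since multiplication by $\varepsilon$ annihilates the $M$-part (because the $\varepsilon$-components multiply to zero), only the ``constant in $\varepsilon$'' piece of $h(x+m\varepsilon)$ survives after multiplying by $\varepsilon$; that piece is simply $h(x)$, so $h(x+m\varepsilon)\varepsilon = h(x)\varepsilon$. Adding the two contributions yields
\begin{equation*}
F(x+m\varepsilon) = f(x) + f'(x) m\,\varepsilon + h(x)\varepsilon = f(x) + (f'(x) m + h(x))\varepsilon,
\end{equation*}
which is the claimed identity.

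The only genuine subtlety — and the step I expect to require the most care — is bookkeeping around the fact that $M$ need not have a multiplicative identity and need not be a ring, so the products $f'(x)m$ and $h(x)$ must be interpreted as the $R$-action of $f'(x) \in R$ on $m \in M$ and as the evaluation $h(x) = \sum_k x^k m_k \in M$ defined earlier in the excerpt. Concretely I must confirm that in the product $(x,m)^k$ the term $k x^{k-1} m$ is an element of $M$ via the $R$-module structure, and that the cross terms involving two factors from $M$ vanish; both are immediate from the defining multiplication $(r,m)(s,n) = (rs, rn+sm)$, but they are the places where the asymmetry between $R$ and $M$ enters. Everything else is a routine linear extension, which is why the lemma is asserted to be ``straightforward to verify.''
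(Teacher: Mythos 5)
Your proof is correct and follows exactly the route the paper intends: the paper states Lemma \ref{idealizationpolynomialfunctions} without proof (as ``straightforward to verify''), and your argument is the direct module-theoretic adaptation of the paper's own proof of Lemma \ref{dualpolynomialfunctions} --- coefficient-wise decomposition for uniqueness, the inductive/binomial computation $(x+m\varepsilon)^k = x^k + kx^{k-1}m\varepsilon$, linear extension, and the vanishing of products of two elements of $0(+)M$ to handle the $h\varepsilon$ term. No gaps.
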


\begin{theorem}
Let $R$ be a ring and $M$ an $R$-module such that every non-zerodivisor on $M$ is a non-zerodivisor of $R$, and let $k$ be a nonnegative integer.  Then one has 
$$\Int^{(k)}(R,R(+)M) = \Int_M^{(k)}(R) \cap \Int^{(k)}(R)  +\Int^{(k)}(R,M)\varepsilon$$
and
$$\Int^{(k)}(R(+)M) = \Int^{(k+1)}_M(R)\cap \Int^{(k)}(R) + \Int^{(k)}(R,M)\varepsilon,$$
where also
$$\Int_M^{(k)}(R) \cap \Int^{(k)}(R) = (M^{\reg})^{-1}R[X] \cap \Int^{(k)}(R)$$
and
$$\Int_M^{(k+1)}(R) \cap \Int^{(k)}(R) = \{f \in \Int^{(k)}(R): f^{(k+1)}(R)M \subseteq M\}.$$  Moreover, one has canonical isomorphisms
$$\Int^{(k)}(R,R(+)M) = (\Int^{(k)}_M(R) \cap \Int^{(k)}(R)) \, (+) \Int^{(k)}(R,M)$$
and
$$\Int^{(k)}(R(+)M) \cong (\Int^{(k+1)}_M(R) \cap \Int^{(k)}(R))\, (+) \Int^{(k)}(R,M).$$
\end{theorem}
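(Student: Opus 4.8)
The plan is to reduce both formulas to the single evaluation identity furnished by Lemma~\ref{idealizationpolynomialfunctions}. First I would pass to the total quotient ring: by the preceding proposition the regularity hypothesis gives $T(R(+)M) = U^{-1}R\,(+)\,U^{-1}M$ with $U = M^{\reg}$, which is itself the idealization of $U^{-1}M$ over $U^{-1}R$. Hence every $F \in T(R(+)M)[X]$ is uniquely $F = f + h\varepsilon$ with $f \in U^{-1}R[X]$ and $h \in T(M)[X] = (U^{-1}M)[X]$, and the internal decomposition $T(R(+)M)[X] = U^{-1}R[X]\oplus (U^{-1}M)[X]\varepsilon$ shows at once that each asserted sum is direct. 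Applying Lemma~\ref{idealizationpolynomialfunctions} with $R,M$ replaced by $U^{-1}R,U^{-1}M$ to the $j$th derivative $F^{(j)} = f^{(j)} + h^{(j)}\varepsilon$ gives, for all $x \in R$ and $m \in M$,
$$F^{(j)}(x+m\varepsilon) = f^{(j)}(x) + \bigl(f^{(j+1)}(x)m + h^{(j)}(x)\bigr)\varepsilon.$$
This identity drives everything that follows.

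For $\Int^{(k)}(R,R(+)M)$ I would evaluate only at $x = x+0\varepsilon$ with $x \in R$, so that $F^{(j)}(x) = f^{(j)}(x) + h^{(j)}(x)\varepsilon$, and demand membership in $R\oplus M$ for all such $x$ and all $j \le k$. This separates into the two independent conditions $f^{(j)}(R)\subseteq R$, i.e.\ $f \in \Int^{(k)}(R)$ (and $f \in U^{-1}R[X]$ automatically), and $h^{(j)}(R)\subseteq M$, i.e.\ $h \in \Int^{(k)}(R,M)$. For $\Int^{(k)}(R(+)M)$ I would instead evaluate at a general $z = x+m\varepsilon$; putting $m = 0$ again yields $f \in \Int^{(k)}(R)$ and $h \in \Int^{(k)}(R,M)$, after which the residual requirement is $f^{(j+1)}(x)m \in M$ for all $x \in R$, $m \in M$ and $0 \le j \le k$, that is, $f^{(i)}(R)M \subseteq M$ for $1 \le i \le k+1$. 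Since $f \in \Int^{(k)}(R)$ already forces $f^{(i)}(R)\subseteq R$, hence $f^{(i)}(R)M\subseteq M$, for $i\le k$, the only genuinely new constraint on $f$ is $f^{(k+1)}(R)M\subseteq M$.

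The two auxiliary identities then record exactly this redundancy. For the first, $f\in\Int^{(k)}(R)$ makes $f^{(j)}(R)\subseteq R$ and therefore $f^{(j)}(R)M\subseteq M$ for all $j\le k$, so the defining conditions of $\Int^{(k)}_M(R)$ are automatic and $\Int^{(k)}_M(R)\cap\Int^{(k)}(R) = (M^{\reg})^{-1}R[X]\cap\Int^{(k)}(R)$; the same count at level $k+1$ leaves $f^{(k+1)}(R)M\subseteq M$ as the one real condition, which is the second identity. Substituting these back gives the two displayed module formulas. For the canonical isomorphisms I would use the map $(f,h)\longmapsto f+h\varepsilon$: it is an additive bijection onto the relevant module by uniqueness of the decomposition, it is compatible with the $\Int^{(k)}_M(R)$-action because $\Int^{(k)}(R,M)$ is a module over $\Int^{(l)}_M(R)$ for $l\ge k$, and it is multiplicative because $\varepsilon^2 = 0$ kills $(h\varepsilon)(h'\varepsilon)$, matching the idealization product $(f,h)(f',h') = (ff',\,fh'+f'h)$.

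The step I expect to be the main obstacle is the bookkeeping of coefficient rings rather than any hard computation: one must consistently keep the ``$f$-part'' in $U^{-1}R[X]$ rather than the larger $T(R)[X]$, and verify that each product $f^{(j+1)}(x)m$ is a well-defined element of $M\subseteq U^{-1}M$ so that every condition ``$\subseteq M$'' is meaningful. This is precisely where the hypothesis $M^{\reg}\subseteq R^{\reg}$ is essential: it yields the inclusion $U^{-1}R\hookrightarrow T(R)$, so that $\Int^{(k)}_M(R)\cap\Int^{(k)}(R)$ is an honest subring of $T(R)[X]$ and $U^{-1}M$ is a $U^{-1}R$-module on which the coefficients of $f$ act. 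In particular the requirement $f^{(k+1)}(R)M\subseteq M$ presupposes $f\in U^{-1}R[X]$, which is what reconciles the intrinsic description $\{f\in\Int^{(k)}(R): f^{(k+1)}(R)M\subseteq M\}$ with the intersection $\Int^{(k+1)}_M(R)\cap\Int^{(k)}(R)$. Once this is settled, collecting the separated conditions yields both formulas and their idealization restatements.
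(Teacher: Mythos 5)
Your proposal is correct and takes essentially the same route as the paper's own proof: identify $T(R(+)M) = U^{-1}R\,(+)\,U^{-1}M$ with $U = M^{\reg}$, write $F = f + h\varepsilon$ uniquely, and separate the membership conditions using the evaluation identity of Lemma~\ref{idealizationpolynomialfunctions}, with the observation that $f \in \Int^{(k)}(R)$ makes the conditions $f^{(j)}(R)M \subseteq M$ automatic for $j \le k$, leaving $f^{(k+1)}(R)M \subseteq M$ as the only new constraint. The only differences are cosmetic: you carry out the argument for all derivatives $F^{(j)}$, $j \le k$, simultaneously, where the paper proves the case $k=0$ and remarks that it extends, and you make explicit the coefficient-ring bookkeeping ($f \in U^{-1}R[X]$ versus $T(R)[X]$) and the verification of the idealization isomorphisms, which the paper leaves implicit.
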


\begin{proof}
We prove the theorem for $k = 0$.  The proof easily extends to arbitrary $k$.  The total quotient ring of $R(+)M$ is given by $S = U^{-1}R(+)U^{-1}M$, where $U = M^{\reg}  \subseteq R^{\reg}$.   .  We may write any $F \in S[X]$ uniquely in the form $F = f+h \varepsilon$, where $f \in U^{-1}R[X]$ and $h \in U^{-1}M[X]$.    The equalities $$\Int(R,R(+)M) = \Int_M(R) \cap \Int(R)  +\Int(R,M)\varepsilon$$ and $$\Int_M(R) \cap \Int(R) = U^{-1}R[X] \cap \Int(R)$$ are then clear from the definitions and the equation $F(x) = f(x) + h(x)\varepsilon$Then \begin{align}\label{eq1} F(x+m\varepsilon) = f(x) + (f'(x)m+h(x)) \varepsilon \end{align}
for all $x\in R$ and all $m \in M$.  Suppose $F \in \Int(R(+)M)$.  Then $h \in \Int(R,M)$, so $f = f + 0 \varepsilon = F-h\varepsilon \in \Int(R(+)M)$.   Since $f \in \Int_M(R) \cap \Int(R)$, one  also has $f(x+ m\varepsilon) = f(x) + f'(x)m \varepsilon \in R(+)M$, and thus
$f'(x)m \in M$, for all $x \in R$ and $m \in M$, and therefore $f \in \Int_M^{(1)}(R) \cap \Int(R)$.  Conversely, if $f \in \Int_M^{(1)}(R) \cap \Int(R)$ and $h \in \Int(R,M)$, then from Eq.\ \ref{eq1} we see that $F = f+h\varepsilon \in \Int(R(+)M)$.
\end{proof}

Consider, for example, the case where $M = R^n$ is free of rank $n$ for some positive integer $n$.  One has $$R(+)R^n  \cong R[T_1, \ldots, T_n]/(T_1,\ldots,T_n)^2,$$
$$\Int(R,R^n) \cong \Int(R)^n,$$
$$\Int_{R^n}(R) = \Int(R),$$
$$\Int^{(1)}_{R^n}(R) = \Int^{(1)}(R).$$
We have 
$$R[\delta_1,\ldots, \delta_n] = R[T_1, \ldots, T_n]/(T_1,\ldots,T_n)^2,$$
where $\delta_i$ is the image of $T_i$ in the given quotient ring for all $i$.  This ring is called the ring of {\it $(n+1)$-dimensional dual numbers over $R$}  and is free of rank $n+1$ as an $R$-module, canonically isomorphic to $R \oplus \bigoplus_{i = 1}^n R\delta_i$.  Thus we have the following.

\begin{corollary}
Let $R$ be a ring and $n$ a positive integer.  Then $R[\delta_1,\ldots, \delta_n] \cong R (+)R^n$, and one has
$$\Int(R, R[\delta_1,\ldots, \delta_n]) = \Int(R)[\delta_1,\ldots,\delta_n]$$ and
$$\Int(R[\delta_1,\ldots, \delta_n])  = \Int^{(1)}(R) +(\delta_1,\ldots,\delta_n) \Int(R)[\delta_1,\ldots,\delta_n]  \cong \Int^{(1)}(R)\, (+) \Int(R)^n.$$
Equivalently, as  $R$-modules one has
$$\Int(R[\delta_1,\ldots, \delta_n])  = \Int^{(1)}(R) \oplus \bigoplus_{i = 1}^n \Int(R) \delta_i.$$
\end{corollary}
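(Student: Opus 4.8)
The plan is to specialize the preceding Theorem to the free module $M = R^n$ and then transport the result across the isomorphism $R(+)R^n \cong R[\delta_1,\ldots,\delta_n]$. First I would record the $R$-algebra isomorphism $R(+)R^n \longrightarrow R[\delta_1,\ldots,\delta_n]$ sending $(r,(m_1,\ldots,m_n)) \longmapsto r + \sum_{i=1}^n m_i\delta_i$. This is an isomorphism because the ideal $0(+)R^n$ is square-zero and maps onto the square-zero ideal $(\delta_1,\ldots,\delta_n)$; in particular it carries the $\varepsilon$-component $(m_i)_i$ of an idealization element to $\sum_i m_i\delta_i$, so that under this identification the role of the summand $M\varepsilon$ appearing in the Theorem is played by $\bigoplus_{i=1}^n R\delta_i$.

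Next I would verify the three building blocks of the Theorem for $M = R^n$, all of which follow from freeness by working componentwise. A polynomial in $T(R^n)[X] = (T(R)[X])^n$ lies in $\Int(R,R^n)$ exactly when each of its $n$ components lies in $\Int(R)$, giving $\Int(R,R^n) \cong \Int(R)^n$. The non-zerodivisors on $R^n$ are exactly those of $R$, so $(R^n)^{\reg} = R^{\reg}$ and the relevant localization is $T(R)$; moreover $f(R)R^n \subseteq R^n$ holds if and only if $f(R) \subseteq R$ (evaluate on a standard basis vector), so $\Int_{R^n}(R) = \Int(R)$ and hence $\Int^{(1)}_{R^n}(R) = \Int^{(1)}(R)$. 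These are precisely the three identities recorded just before the statement.

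I would then substitute these values into the Theorem with $k = 0$. For the module of integer-valued polynomials this gives $\Int(R,R(+)R^n) = (\Int_{R^n}(R)\cap\Int(R)) + \Int(R,R^n)\varepsilon = \Int(R) + \Int(R)^n\varepsilon$, while for the ring it gives $\Int(R(+)R^n) = (\Int^{(1)}_{R^n}(R)\cap\Int(R)) + \Int(R,R^n)\varepsilon = \Int^{(1)}(R) + \Int(R)^n\varepsilon$, where I use $\Int^{(1)}(R)\subseteq\Int(R)$. Transporting across the isomorphism of the first paragraph replaces the summand $\Int(R)^n\varepsilon$ by $\bigoplus_{i=1}^n\Int(R)\delta_i$, yielding $\Int(R,R[\delta_1,\ldots,\delta_n]) = \Int(R)\oplus\bigoplus_i\Int(R)\delta_i = \Int(R)[\delta_1,\ldots,\delta_n]$ and $\Int(R[\delta_1,\ldots,\delta_n]) = \Int^{(1)}(R)\oplus\bigoplus_i\Int(R)\delta_i$ as $R$-modules; the canonical idealization isomorphisms in the Theorem become the asserted isomorphisms with $\Int^{(1)}(R)\,(+)\,\Int(R)^n$.

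Finally, to match the displayed form involving the ideal $(\delta_1,\ldots,\delta_n)$, I would note that because $\delta_i\delta_j = 0$ in $R[\delta_1,\ldots,\delta_n]$ one has $\delta_i\Int(R)[\delta_1,\ldots,\delta_n] = \Int(R)\delta_i$, so $(\delta_1,\ldots,\delta_n)\Int(R)[\delta_1,\ldots,\delta_n] = \sum_i\delta_i\Int(R)[\delta_1,\ldots,\delta_n] = \bigoplus_{i=1}^n\Int(R)\delta_i$, giving the second displayed equation. There is no serious obstacle: the corollary is a direct specialization of the Theorem, and the only care required is bookkeeping in the isomorphism — keeping the localization sets straight so that $\Int_{R^n}(R)$ genuinely collapses to $\Int(R)$, and using the square-zero relations to see that the ideal $(\delta_1,\ldots,\delta_n)$ coincides with the direct sum $\bigoplus_i\Int(R)\delta_i$ rather than a larger submodule.
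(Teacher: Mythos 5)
Your proposal is correct and takes essentially the same route as the paper: the paper's (implicit) proof is precisely the paragraph preceding the corollary, which records the identities $R(+)R^n \cong R[T_1,\ldots,T_n]/(T_1,\ldots,T_n)^2$, $\Int(R,R^n) \cong \Int(R)^n$, $\Int_{R^n}(R) = \Int(R)$, and $\Int^{(1)}_{R^n}(R) = \Int^{(1)}(R)$, and then specializes the theorem on $\Int^{(k)}(R(+)M)$ to $M = R^n$ with $k = 0$. Your extra verifications---that $(R^n)^{\reg} = R^{\reg}$ (so the theorem's hypothesis holds), the componentwise identifications, and the square-zero bookkeeping showing $(\delta_1,\ldots,\delta_n)\Int(R)[\delta_1,\ldots,\delta_n] = \bigoplus_{i=1}^n \Int(R)\delta_i$---are exactly the details the paper leaves to the reader.
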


Another example is the case where $M = R/I$ is the quotient of $R$ by an ideal $I$.   Let $U$ denote the set of all non-zerodivisors mod $I$ of $R$.  (Thus, if $I = \ppp$ is prime, then $U = R-\ppp$.)  One has
 $$R\, (+)\, R/I  \cong R[\varepsilon]/I\varepsilon \cong R[X]/((X^2)+IX),$$
$$\Int(R,R/I) = \Int(R/I),$$
and, if every non-zerodivisor mod $I$ is a non-zerodivisor, then
$$\Int_{R/I}(R) = U^{-1}R[X] \cap \Int(R),$$
$$\Int^{(1)}_{R/I}(R) = U^{-1}(R)[X] \cap \Int^{(1)}(R).$$
Thus we have the following.

\begin{corollary}
Let $R$ be a ring and $I$ an ideal of  $R$ such that every non-zerodivisor mod $I$ of $R$ is a non-zerodivisor of $R$ (which holds, for example, if $I$ is a proper ideal of $R$ containing all zerodivisors of $R$).  Let $S =  R[X]/((X^2)+IX)$, and let $U$ be the set of all non-zerodivisors mod $I$ of $R$.  One has $S \cong R (+) R/I$ and
$$\Int(S) \cong (U^{-1}R[X] \cap \Int^{(1)}(R))\, (+) \, \Int(R/I).$$
\end{corollary}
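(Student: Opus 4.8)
The plan is to deduce this corollary as the special case $M = R/I$, $k = 0$ of the preceding idealization theorem, after checking that its hypotheses apply and substituting the three identifications recorded in the display immediately above. First I would note the ring isomorphism $S \cong R(+)R/I$, which is the chain $R(+)R/I \cong R[\varepsilon]/I\varepsilon \cong R[X]/((X^2)+IX)$ already displayed: the adjoined generator of $S$ corresponds to $0(+)\overline{1}$, and the relation $X^2 + IX = 0$ simultaneously encodes $\varepsilon^2 = 0$ and $I\varepsilon = 0$ in $R(+)R/I$.

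Next I would confirm the regularity hypothesis of the theorem for $M = R/I$. An element $a \in R$ is a non-zerodivisor on the module $R/I$ exactly when $ar \in I$ forces $r \in I$, i.e.\ exactly when $a$ is a non-zerodivisor mod $I$; hence the set $M^{\reg}$ of the theorem coincides with the set $U$ of the corollary, and the theorem's assumption that every non-zerodivisor on $M$ be a non-zerodivisor of $R$ is verbatim the stated hypothesis. For the parenthetical sufficient condition I would argue contrapositively: if $I$ is proper and contains every zerodivisor of $R$, then any zerodivisor $a$ lies in $I$, so its image in $R/I$ is $0$; since $R/I \neq 0$, the element $0$ is a zerodivisor on $R/I$ under the paper's convention (as $0 \cdot \overline{1} = 0$ with $\overline{1} \neq 0$), whence $a$ is a zerodivisor mod $I$. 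Thus every non-zerodivisor mod $I$ is indeed a non-zerodivisor of $R$.

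With the hypotheses secured, I would apply the theorem at $k = 0$ to get
$$\Int(S) = \Int(R(+)R/I) \cong (\Int^{(1)}_{R/I}(R) \cap \Int(R))\, (+) \, \Int(R, R/I),$$
and then substitute the identities $\Int(R, R/I) = \Int(R/I)$ and $\Int^{(1)}_{R/I}(R) = U^{-1}R[X] \cap \Int^{(1)}(R)$ stated just before the corollary. The sole computation is the collapse of the left summand: because $\Int^{(1)}(R) \subseteq \Int(R)$, intersecting with $\Int(R)$ is redundant, so $\Int^{(1)}_{R/I}(R) \cap \Int(R) = (U^{-1}R[X] \cap \Int^{(1)}(R)) \cap \Int(R) = U^{-1}R[X] \cap \Int^{(1)}(R)$. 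Assembling the pieces gives the asserted isomorphism.

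All of the substantive work lives upstream, in the idealization theorem and in the three displayed identifications, so I do not expect any genuine obstacle internal to this corollary beyond bookkeeping. The one point that truly needs care is the translation between the module-theoretic notion ``non-zerodivisor on $R/I$'' used by the theorem and the arithmetic notion ``non-zerodivisor mod $I$'' used here, together with the convention that $0$ is a zerodivisor on a nonzero module; it is precisely this convention that makes the parenthetical condition (proper $I$ containing all zerodivisors) correct.
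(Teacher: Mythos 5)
Your proposal is correct and follows exactly the route the paper intends: the corollary is the case $M = R/I$, $k=0$ of the idealization theorem, combined with the identifications $\Int(R,R/I) = \Int(R/I)$ and $\Int^{(1)}_{R/I}(R) = U^{-1}R[X] \cap \Int^{(1)}(R)$ displayed just before it, with the intersection against $\Int(R)$ collapsing since $\Int^{(1)}(R) \subseteq \Int(R)$. Your careful checks---that non-zerodivisors on the module $R/I$ are exactly non-zerodivisors mod $I$, and that the parenthetical sufficient condition holds because $0$ is a zerodivisor on the nonzero module $R/I$---are exactly the bookkeeping the paper leaves implicit.
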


\begin{corollary}\label{helppp}
Let $R$ be a ring and $\ppp$ a prime ideal of $R$ containing all zerodivisors of $R$.  Let $S =  R[X]/((X^2)+\ppp X)$.  One has $S \cong R (+) R/\ppp$ and
$$\Int(S) \cong (R_\ppp[X] \cap \Int^{(1)}(R)) \, (+) \, \Int(R/\ppp).$$
\end{corollary}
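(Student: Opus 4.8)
The plan is to derive Corollary~\ref{helppp} as the special case $I = \ppp$ of the immediately preceding corollary. Writing $U$ for the set of non-zerodivisors mod $\ppp$ of $R$, that corollary gives directly, once its hypothesis is checked, the isomorphism $S \cong R(+)R/\ppp$ together with
$$\Int(S) \cong (U^{-1}R[X] \cap \Int^{(1)}(R)) \,(+)\, \Int(R/\ppp).$$
So the only work is to identify $U$ and to confirm that every non-zerodivisor mod $\ppp$ is a non-zerodivisor of $R$.

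Both of these follow from a single observation about primality. I would argue that an element $a \in R$ is a non-zerodivisor mod $\ppp$ exactly when $a \notin \ppp$: if $a \notin \ppp$ and $ab \in \ppp$, then primality forces $b \in \ppp$, whereas if $a \in \ppp$ then $a \cdot 1 \in \ppp$ while $1 \notin \ppp$. Hence $U = R - \ppp$. Since by hypothesis $\ppp$ contains all zerodivisors of $R$, any $a \in U = R - \ppp$ is in particular a non-zerodivisor of $R$, so the hypothesis of the preceding corollary is satisfied (this is precisely its parenthetical remark). Moreover $U = R - \ppp$ gives $U^{-1}R = R_\ppp$ and therefore $U^{-1}R[X] = R_\ppp[X]$.

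Substituting $U^{-1}R[X] = R_\ppp[X]$ into the displayed formula yields
$$\Int(S) \cong (R_\ppp[X] \cap \Int^{(1)}(R)) \,(+)\, \Int(R/\ppp),$$
as claimed. I do not anticipate any genuine obstacle: the entire content of the corollary is that, for a prime $\ppp$ absorbing all zerodivisors of $R$, the multiplicative set of non-zerodivisors mod $\ppp$ is the complement $R - \ppp$, so the general localization $U^{-1}R$ appearing in the preceding corollary collapses to the localization $R_\ppp$.
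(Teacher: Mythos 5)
Your proposal is correct and follows exactly the route the paper intends: Corollary \ref{helppp} is the specialization $I = \ppp$ of the preceding corollary, where primality gives $U = R - \ppp$ (hence $U^{-1}R[X] = R_\ppp[X]$) and the hypothesis that $\ppp$ contains all zerodivisors supplies the regularity condition, precisely as the paper's parenthetical remarks indicate. Your verification that non-zerodivisors mod $\ppp$ are exactly the elements of $R - \ppp$ is the only content needed, and you have it right.
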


\begin{example}
One has $\Int(\ZZ[T]/(T^2,pT)) \cong (\ZZ_{(p)}[X] \cap \Int^{(1)}(\ZZ)) \, (+) \,  \FF_p[X]$ for any prime number $p$.
\end{example}

Another example is where $M = T(R)$ is the total quotient ring of $R$.

\begin{corollary}
Let $R$ be a ring.  Let $S =  R + T(R) \varepsilon = R+\varepsilon T(R)[\varepsilon] \subseteq T(R)[\varepsilon]$.  One has $S \cong R (+) T(R)$ and
$$\Int(S) = \Int(R) +  T(R)[X] \varepsilon = \Int(R) +  \varepsilon T(R)[X][\varepsilon] \cong \Int(R) \, (+) \, T(R)[X].$$
\end{corollary}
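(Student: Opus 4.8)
The plan is to specialize the section's main theorem on idealizations to the module $M = T(R)$, with $k = 0$; the entire content of the corollary is then that every integrality condition appearing in that theorem becomes vacuous for this particular choice of $M$. First I would record the identification $S \cong R(+)T(R)$. Writing $z = x + m\varepsilon$ for $(x,m) \in R(+)M$ as in the section, the assignment $(x,m) \longmapsto x + m\varepsilon$ is the restriction of the usual idealization isomorphism and its image is exactly $R + T(R)\varepsilon = S$; multiplicativity is the one-line check $(x+m\varepsilon)(x'+m'\varepsilon) = xx' + (xm'+x'm)\varepsilon$ using $\varepsilon^2 = 0$, matching the idealization product $(x,m)(x',m') = (xx', xm'+x'm)$. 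Thus $\Int(S) = \Int(R(+)T(R))$, and it remains to apply the theorem with $M = T(R)$.

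Next I would verify that $M = T(R)$ satisfies the regularity hypothesis and identify the relevant localizing set. The key claim is that the set $M^{\reg}$ of non-zerodivisors on $T(R)$ coincides with the set $U = R^{\reg}$ of non-zerodivisors of $R$: if $a \in M^{\reg}$ then, since $R \subseteq T(R)$, the relation $ab = 0$ with $b \in R$ forces $b = 0$, so $a \in U$; conversely, if $a \in U$ then $a$ is a unit of $T(R) = U^{-1}R$, so multiplication by $a$ is injective on $T(R)$ and $a \in M^{\reg}$. In particular every non-zerodivisor on $M$ is a non-zerodivisor of $R$, so the hypothesis of the theorem holds, and moreover $T(M) = U^{-1}T(R) = T(R)$.

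Finally I would compute the three constituents appearing in the theorem and observe that each integrality condition is automatic. Since $\Int(R,M) = \{h \in T(R)[X] : h(R) \subseteq T(R)\}$ and $h(R) \subseteq T(R)$ holds for every $h \in T(R)[X]$, one gets $\Int(R,M) = T(R)[X]$. Using the theorem's own characterizations, $\Int_M(R) \cap \Int(R) = (M^{\reg})^{-1}R[X] \cap \Int(R) = T(R)[X] \cap \Int(R) = \Int(R)$ because $\Int(R) \subseteq T(R)[X]$; and $\Int_M^{(1)}(R) \cap \Int(R) = \{f \in \Int(R) : f'(R)\,T(R) \subseteq T(R)\} = \Int(R)$, since $f'(R) \subseteq T(R)$ and $T(R)$ is a ring. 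Substituting into the theorem at $k = 0$ yields $\Int(S) = \Int(R) + T(R)[X]\varepsilon$ together with the isomorphism $\Int(S) \cong \Int(R)\,(+)\,T(R)[X]$, and the alternative form $T(R)[X]\varepsilon = \varepsilon T(R)[X][\varepsilon]$ is merely the rewriting afforded by $\varepsilon^2 = 0$ inside $T(R)[\varepsilon]$.

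There is no real obstacle beyond these vacuity checks, since the result is essentially a clean specialization of the section's theorem. The only point requiring a moment's care is the equality $M^{\reg} = R^{\reg}$: this is precisely what makes $T(M) = T(R)$ and what forces all three integrality conditions to collapse, and it is where I would focus the written argument.
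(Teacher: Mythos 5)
Your proposal is correct and follows exactly the route the paper intends: the corollary is stated without a separate proof precisely because it is the specialization of the section's main theorem to $M = T(R)$ with $k=0$, which is what you carry out. Your verification that $M^{\reg} = R^{\reg}$ (so the regularity hypothesis holds and $T(M) = T(R)$) and that the three constituents collapse to $\Int(R)$, $\Int(R)$, and $T(R)[X]$ is exactly the intended check.
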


A consequence of the above corollary is that $\Int(R+\varepsilon T(R)[\varepsilon])$ has the structure of an $(R+\varepsilon T(R)[\varepsilon])$-plethory  if $\Int(R)$ has the structure of an $R$-plethory \cite[Proposition 7.29]{ell}.

The ordered monoid of regular fractional ideals of $R$ is isomorphic to the ordered monoid of regular fractional ideals of $R+\varepsilon T(R)[\varepsilon]$. It follows that the ring $R$  is Dedekind (resp., Krull, Pr\"ufer, PVMR) if and only if $R+\varepsilon T(R)[\varepsilon]$ is Dedekind (resp., Krull, Pr\"ufer, PVMR), all four of these notions being defined naturally for rings with zerodivisors \cite{huc}.  (For example, a ring $R$ is said to be {\it Dedekind} if every regular ideal of $R$ is invertible.)  In particular, if $D$ is a Krull domain, then $\Int(D)$ is a PVMD \cite{tar}, and therefore $\Int(D+\varepsilon T(D)[\varepsilon])$ is a PVMR.  Likewise, if $D$ is a Dedekind domain with all residue fields finite, then $\Int(D)$ is a Pr\"ufer domain, and therefore $\Int(D+\varepsilon T(D)[\varepsilon])$ is a Pr\"ufer ring.

\begin{conjecture}
If $R$ is a Krull ring, then $\Int(R)$ is a PVMR and has the structure of an $R$-plethory.
\end{conjecture}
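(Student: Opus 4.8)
The plan is to treat the two assertions separately while reducing both to the Krull domain case, where they are established: for a Krull domain $D$ one has that $\Int(D)$ is a PVMD by \cite{tar}, and that $\Int(D)$ carries a $D$-plethory structure because a Krull domain is a domain of Krull type, so condition (2) of the introduction holds and hence condition (3) follows by \cite[Theorems 2.9 and 7.11]{ell}. The special instances already recorded above---that $\Int(D+\varepsilon T(D)[\varepsilon])$ is a PVMR and carries a plethory structure when $\Int(D)$ does---suggest that a general Krull ring $R$ should be handled by transporting the essential finite-character structure of $R$ through the functor $\Int$, together with a separate treatment of nilpotents and zerodivisors modeled on the idealization computations of Sections 2 and 4.

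For the PVMR assertion I would use a local characterization of the PVMR property as in \cite{huc}: a ring is a PVMR precisely when it is integrally closed and every finitely generated regular ideal is $t$-invertible, equivalently when each of its localizations at a maximal $t$-ideal is a valuation ring (in the sense appropriate to rings with zerodivisors). The first step is to realize $R$ as a finite-character intersection $R = \bigcap_\alpha V_\alpha$ of essential regular valuation overrings inside $T(R)$, and then to prove the corresponding identity $\Int(R) = \bigcap_\alpha \Int(V_\alpha)$ with finite character. The second step is to compute $\Int(V_\alpha)$ for each essential piece $V_\alpha$ and to verify that it is Pr\"ufer-like, so that finitely generated regular ideals of $\Int(R)$ become $t$-invertible locally and hence globally; integral closedness of $\Int(R)$ in $T(R)[X]$ should then follow from the integral closedness of each $\Int(V_\alpha)$ together with the intersection formula.

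For the plethory assertion I would aim to verify condition (2), namely that $\Int(R)_\ppp$ is free over $R_\ppp$ and equals $\Int(R_\ppp)$ for every maximal ideal $\ppp$; conditions (3)--(5), and in particular the $R$-plethory structure, then follow from \cite[Theorems 2.9 and 7.11]{ell}. The localization identity $\Int(R)_\ppp = \Int(R_\ppp)$ is exactly the point proved for domains of Krull type in \cite[Theorem 7.11]{ell}, and its proof should extend once the finite-character decomposition above is available and one knows that $R_\ppp$ remains a local Krull ring; freeness of $\Int(R_\ppp)$ over $R_\ppp$ would then be read off from the explicit local structure. When $R$ decomposes as a finite product one may instead invoke stability of conditions (4) and (5) under products \cite[Corollary 7.15]{ell}.

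The main obstacle is precisely what separates this from the domain theory: the presence of zerodivisors and nilpotents. Over a total quotient ring $T(R)$ that is not a field, polynomial evaluation can degenerate, so proving both the intersection formula $\Int(R) = \bigcap_\alpha \Int(V_\alpha)$ and the localization formula $\Int(R)_\ppp = \Int(R_\ppp)$ requires genuinely new control that the domain arguments do not supply. Moreover, the $t$-theory for rings with zerodivisors in \cite{huc} is more delicate than its domain counterpart, so transferring $t$-invertibility from $R$ to $\Int(R)$ is not automatic. I expect the non-reduced case to be the hardest: Krull rings such as $R+\varepsilon T(R)[\varepsilon]$ are not reduced, and for these the relevant local computations unavoidably involve the higher integer-valued derivative rings $\Int^{(k)}(R)$ appearing throughout Sections 2--4, whose valuation- and freeness-theoretic behavior over a base Krull ring is itself only partially understood. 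It is this entanglement of the rings $\Int^{(k)}(R)$ with the $t$-structure in the non-reduced, zerodivisor setting that I believe keeps the statement a conjecture.
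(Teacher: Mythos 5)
This statement is a \emph{conjecture} in the paper: the paper offers no proof of it, so there is nothing to compare your argument against, and your proposal does not close the gap either. What you have written is a research program, not a proof, and by your own concluding paragraph its central steps are left open. Concretely, three things are asserted but never established: (i) the finite-character intersection formula $\Int(R) = \bigcap_\alpha \Int(V_\alpha)$ over a defining family of regular valuation overrings of a Krull ring $R$; (ii) the localization identity $\Int(R)_\ppp = \Int(R_\ppp)$ together with freeness of $\Int(R_\ppp)$ over $R_\ppp$, which is exactly condition (2) of the introduction and is precisely what is unknown beyond domains of Krull type; and (iii) the transfer of $t$-invertibility of finitely generated regular ideals from the pieces $\Int(V_\alpha)$ to $\Int(R)$. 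Each of these is the open content of the conjecture, not a lemma one can quote.

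Two of your intermediate claims are also shaky as stated. First, for rings with zerodivisors the PVMR property is not characterized by ordinary localizations at maximal $t$-ideals being valuation rings; one must work with regular ideals and large quotient rings $R_{[\ppp]}$, and this is exactly the delicacy of \cite{huc} that your plan acknowledges but then glosses over. Second, the expectation that each $\Int(V_\alpha)$ is ``Pr\"ufer-like'' is false in general: for a discrete valuation domain $V$ with infinite residue field one has $\Int(V) = V[X]$, which is not Pr\"ufer (though it is a PVMD); so any argument must go through $v$- and $t$-multiplication rather than Pr\"ufer-type invertibility, and Tartarone's theorem for Krull domains is itself proved by such $t$-theoretic methods, not by an intersection formula of the kind you propose. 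Your diagnosis of the difficulty---zerodivisors, non-reduced Krull rings such as $R+\varepsilon T(R)[\varepsilon]$, and the entanglement with the rings $\Int^{(k)}(R)$---is sound and matches why the paper leaves this statement as a conjecture, but a diagnosis of the obstacles is not a resolution of them.
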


\section{Polynomial torsion}

Let $R$ be a ring.  An $R$-module $M$ is said to be {\it without polynomial torsion} if $f(R) = (0)$ implies $f = 0$ for all $f \in M[X]$ \cite{cah0}. Clearly every $R$-module without polynomial torsion is $R$-torsion-free.

Observe that there is a canonical $R$-module homomorphism $\operatorname{eval}: M[X] \longrightarrow M^R$, where $M^R$ is the $R$-module of all functions from $R$ to $M$, or, equivalently, $M^R$ is the direct product $\prod_{r \in R} M$.  The image of eval is the $R[X]$-module $P(R,M)$ of all {\it polynomial functions from $R$ to $M$}.  The kernel  of eval is the $R[X]$-submodule $I_M = \{f \in M[X]: f(R) = (0)\}$ of all polynomials in $M[X]$ vanishing on $R$. Thus one has $P(R,M) \cong M[X]/I_M$ as $R[X]$-modules.   Moreover, the $R$-module $M$ is without polynomial torsion if and only if $I_M = (0)$, if and only if the (surjective) $R[X]$-module homomorphism $\operatorname{eval}: M[X] \longrightarrow P(R,M)$ is an isomorphism.

Note that if $S = M$ is an $R$-algebra, then $P(R,S)$ is an $R[X]$-algebra, $I_S$ is an ideal of the $R[X]$-algebra $S[X]$, and the isomorphism $P(R,S) \cong S[X]/I_S$ is an isomorphism of $R[X]$-algebras.

\begin{example}[{\cite[Lemma 1.1]{nar}}] Let $D$ be an integral domain.
\begin{enumerate}
\item $D$ is without polynomial torsion if and only if $D$ is not a finite field.
\item If $D = \FF_q$ is a finite field, then $I_D = (X^q-X)$.
\end{enumerate}
\end{example}

\begin{example}[{\cite[Corollary 5]{gil}}]
 If $R$ is a finite ring, then $I_R$ is principal if and only if $R$ is reduced, if and only if $R$ is a finite direct product of finite fields.
\end{example}

\begin{example}[{\cite[Chapter II Exercise 3]{nar}}]  Let $R$ be a ring.
\begin{enumerate}
\item An $R$-submodule of an $R$-module without polynomial torsion is without polynomial torsion.
\item The direct sum and direct product of $R$-modules without polynomial torsion  is without polynomial torsion.
\item If $M$ is an $R$-module and $N$ is an $R$-submodule of $M$ such that both $N$ and $M/N$ are without polynomial torsion, then $M$ is without polynomial torsion.
\end{enumerate}
\end{example}

\begin{proposition}
Let $R$ be a ring and $M$ an $R$-module.  Then $\Int(R,M) = M[X]$ if and only if the $R$-module $T(M)/M$ is without polynomial torsion.  
\end{proposition}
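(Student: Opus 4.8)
The plan is to identify the quotient $\Int(R,M)/M[X]$ with the submodule of $(T(M)/M)[X]$ consisting of those polynomials vanishing on $R$, and then to apply the criterion recalled at the start of this section: a module is without polynomial torsion exactly when its submodule of $R$-vanishing polynomials is zero.

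Write $N = T(M)/M$ and let $\pi \colon T(M) \longrightarrow N$ denote the canonical projection. First I would observe that applying $\pi$ coefficientwise yields a surjective $R[X]$-module homomorphism $\pi \colon T(M)[X] \longrightarrow N[X]$ with kernel $M[X]$, so that $N[X] \cong T(M)[X]/M[X]$; indeed $\sum_k X^k m_k$ maps to $\sum_k X^k \pi(m_k)$, which vanishes if and only if every $m_k \in M$. Since $\pi$ is $R$-linear, evaluation commutes with it: for every $h \in T(M)[X]$ and every $x \in R$ one has $\pi(h)(x) = \pi(h(x))$. This is the one verification requiring (entirely routine) computation, and it is essentially the only content of the argument.

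Next, because $M$ is the kernel of $\pi$, for $h \in T(M)[X]$ and $x \in R$ we have $h(x) \in M$ if and only if $\pi(h(x)) = 0$, i.e.\ if and only if $\pi(h)(x) = 0$. Hence $h(R) \subseteq M$ if and only if $\pi(h)(R) = (0)$, which says exactly that $h \in \Int(R,M)$ if and only if $\pi(h) \in I_N$, where $I_N = \{f \in N[X] : f(R) = (0)\}$. Thus $\Int(R,M) = \pi^{-1}(I_N)$.

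Finally, since $\pi$ is surjective with kernel $M[X]$, the equality $\pi^{-1}(I_N) = M[X] = \ker\pi$ holds if and only if $I_N = (0)$: one direction is immediate, and for the other, applying the surjection $\pi$ gives $I_N = \pi(\pi^{-1}(I_N)) = \pi(\ker\pi) = (0)$. By the criterion recalled above, $I_N = (0)$ is equivalent to $N = T(M)/M$ being without polynomial torsion. Combining these equivalences yields $\Int(R,M) = M[X]$ if and only if $T(M)/M$ is without polynomial torsion, as desired. I expect no genuine obstacle here; the whole argument reduces to the observation that $\Int(R,M)/M[X]$ is precisely $I_{T(M)/M}$.
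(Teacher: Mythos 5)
Your proof is correct and follows essentially the same route as the paper: reduce modulo $M$ coefficientwise, observe that $f \in M[X]$ corresponds to $\overline{f} = 0$ in $(T(M)/M)[X]$ while $f \in \Int(R,M)$ corresponds to $\overline{f}$ vanishing on $R$, and conclude from the definition of polynomial torsion. The paper states this more tersely, but the content — including the surjectivity of the reduction map, which you make explicit — is identical.
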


\begin{proof}
Let $f \in T(M)[X]$, and let $\overline{f}$ denote the image of $f$ in $(T(M)/M)[X]$.  Note that $f \in M[X]$ if and only if $\overline{f} = \overline{0}$ in $(T(M)/M)[X]$, while $f \in \Int(R,M)$ if and only if $\overline{f}(R) = (0)$ in $T(M)/M$.  The proposition follows.
\end{proof}

\begin{corollary}
Let $R$ be a ring.  Then $\Int(R) = R[X]$ if and only if the $R$-module $T(R)/R$ is without polynomial torsion.  
\end{corollary}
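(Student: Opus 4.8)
The plan is to obtain this corollary as the immediate specialization of the preceding proposition to the case $M = R$, where $R$ is regarded as a module over itself. The only genuine content is to verify that, under this specialization, each of the module-theoretic ingredients $T(M)$, $M[X]$, and $\Int(R,M)$ reduces to its ring-theoretic counterpart; once that bookkeeping is in place, the statement follows by direct substitution.

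First I would check the identification of the relevant localizations. For $M = R$, an element $a \in R$ is a non-zerodivisor on the module $R$ exactly when $am = 0$ forces $m = 0$ for all $m \in R$, which is precisely the condition that $a$ be a non-zerodivisor of the ring $R$. Hence $M^{\reg} = R^{\reg} = U$, and therefore $T(M) = U^{-1}M = U^{-1}R = T(R)$ and $M[X] = R[X]$. Next I would unwind the definition $\Int(R,M) = \{h \in T(M)[X] : h(R) \subseteq M\}$ in this case, obtaining $\Int(R,R) = \{h \in T(R)[X] : h(R) \subseteq R\} = \Int(R)$, the ring introduced in the introduction. (As flagged in the remark preceding the module-theoretic definitions, the symbol $\Int(R,M)$ is henceforth read in the module sense; the point of this step is simply that when $M = R$ this module coincides with the ring $\Int(R)$.)

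Finally I would apply the proposition with $M = R$, which asserts that $\Int(R,R) = R[X]$ if and only if the $R$-module $T(R)/R$ is without polynomial torsion; substituting $\Int(R,R) = \Int(R)$ and $T(R)/R = T(R)/R$ then yields exactly the claimed equivalence. I do not expect any substantive obstacle here: the entire argument is a matter of confirming that the module constructions degenerate correctly when the module is the base ring, and the only point requiring even a moment's care is the dual usage of the notation $\Int(R,M)$ together with the verification that the module localization $T(M)$ attached to $M = R$ agrees with the total quotient ring $T(R)$. Both are routine.
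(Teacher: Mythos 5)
Your proposal is correct and is exactly the paper's route: the corollary is stated as an immediate specialization of the preceding proposition to $M = R$, with the identifications $M^{\reg} = R^{\reg}$, $T(M) = T(R)$, $M[X] = R[X]$, and $\Int(R,R) = \Int(R)$ being the only (routine) verifications needed. Your careful handling of the notational overlap for $\Int(R,M)$ matches the paper's own remark on that point.
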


Let $R$ be a ring and $M$ an $R$-module.  An {\it associated prime} of $M$ is a prime ideal of $R$ of the form $\ann_R(m)$ for some $m \in M$, where $\ann_R(m) = (0 :_R m) = \{r \in R: rm = 0\}$ is the {\it annihilator of $m$}.  

If $x = m/u  \in T(M)$, where $m \in M$ and $u \in M^{\reg}$, and if $\overline{x}$ is the image of $x$ in $T(M)/M$, then $\ann_R(\overline{x}) = (M :_R m/u) = (uM :_R m)  = \{r \in R: rm \in uM\}$.  An ideal of $R$ of the form $ (uM :_R m)$ for $m \in M$ and $u \in M^{\reg}$ is a {\it conductor ideal of $R$ in $M$}.  In particular, the associated primes of $T(M)/M$ are the prime conductor ideals of $R$ in $M$.

\begin{proposition}[{\cite[Corollaire 1]{cah0}}]
Let $R$ be a Noetherian ring and $M$ an $R$-module.  Then $M$ is without polynomial torsion if and only if $R/\ppp$ is finite for every associated prime $\ppp$ of $M$.
\end{proposition}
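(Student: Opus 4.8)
The plan is to prove the biconditional in the form forced by the base case: $M$ is without polynomial torsion if and only if $R/\ppp$ is infinite for every $\ppp \in \Ass(M)$. (For a domain $D$ regarded as a module over itself one has $\Ass(D) = \{(0)\}$ and $D/(0) = D$, so the statement specializes exactly to the fact recorded earlier that $D$ is without polynomial torsion iff $D$ is not a finite field; this pins down the correct reading of the residue-field condition.) The first move is to reduce the substantive implication to finitely generated modules: if $0 \neq f = \sum_k X^k m_k \in M[X]$ satisfies $f(R) = (0)$, then the submodule $M' = \sum_k R m_k$ is finitely generated, still contains $f$, and satisfies $\Ass(M') \subseteq \Ass(M)$, so it suffices to rule out polynomial torsion in a finitely generated module all of whose associated primes have infinite residue field.

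For the implication ``without polynomial torsion $\Rightarrow$ every $R/\ppp$ infinite'' I would argue by contraposition, manufacturing explicit torsion from a finite residue field. Suppose $\ppp \in \Ass(M)$ has $R/\ppp$ finite, choose $m \in M$ with $\ann_R(m) = \ppp$, and fix a finite transversal $C \subseteq R$ of $R/\ppp$. Then $P(X) = \prod_{c \in C}(X-c) \in R[X]$ is monic, and for every $r \in R$ the factor indexed by the class of $r$ lies in $\ppp$, so $P(r) \in \ppp = \ann_R(m)$ and hence $P(r)m = 0$. Thus $f = P(X)\,m \in M[X]$ vanishes on all of $R$ while its leading coefficient is $m \neq 0$, so $M$ has polynomial torsion; contrapositively, absence of polynomial torsion forces every associated residue field to be infinite.

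The substantive direction is ``every $R/\ppp$ infinite $\Rightarrow$ without polynomial torsion,'' and here I expect the main obstacle to be the passage through the possibly non-reduced structure of $M$. Working with a finitely generated $M$ over the Noetherian ring $R$, I would take a primary decomposition $0 = \bigcap_i Q_i$ and use the embedding $M \hookrightarrow \bigoplus_i M/Q_i$; by the earlier facts that submodules and finite direct sums of modules without polynomial torsion are again such, it suffices to treat each coprimary quotient $N = M/Q_i$, whose unique associated prime $\qqq \in \Ass(M)$ has $R/\qqq$ infinite. For such a coprimary $N$ the zerodivisors on $N$ are exactly the elements of $\qqq$, which is what makes a Vandermonde argument available: given $f = \sum_{b=0}^d X^b n_b \in N[X]$ with $f(R) = (0)$, choose $x_0, \ldots, x_d \in R$ pairwise incongruent modulo $\qqq$ (possible since $R/\qqq$ is infinite); the relations $\sum_b x_a^{\,b} n_b = 0$ read $V\mathbf{n} = 0$ for the Vandermonde matrix $V = (x_a^{\,b})$, whose determinant $\prod_{a<c}(x_c - x_a)$ lies outside $\qqq$ and is therefore a non-zerodivisor on $N$, so multiplying by the adjugate gives $(\det V)\,n_b = 0$ and hence $n_b = 0$ for all $b$. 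Thus $f = 0$, each $N$ is without polynomial torsion, and the reduction finishes the argument. The delicate point is precisely this reduction to the coprimary case: it is what disentangles the finitely many associated primes so that a single infinite residue field supplies enough evaluation points to make the Vandermonde determinant a non-zerodivisor.
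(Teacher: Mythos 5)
Your proof is correct, but there is nothing in the paper to compare it with: the proposition is quoted from Cahen--Chabert \cite{cah0} (Corollaire 1) with no proof given, so your write-up supplies the missing argument, and it is essentially the classical one. You were also right to repair the statement before proving it: as printed, ``$R/\ppp$ is finite'' must read ``$R/\ppp$ is \emph{infinite},'' exactly as you infer from the specialization to a domain $D$ (where $\Ass(D)=\{(0)\}$ and the paper's earlier example says $D$ is without polynomial torsion iff $D$ is not a finite field); the same typo propagates into the corollary that follows the proposition, as one checks with $R=M=\ZZ$: every associated prime of $\QQ/\ZZ$ has finite residue field, yet $\Int(\ZZ)\neq\ZZ[X]$. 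On the mathematics itself: the contrapositive direction via the monic transversal polynomial $\prod_{c\in C}(X-c)\cdot m$ with $\ann_R(m)=\ppp$ is sound (and, as your argument shows, needs no Noetherian hypothesis); the reduction to the finitely generated submodule generated by the coefficients of a putative torsion polynomial, using $\Ass(M')\subseteq\Ass(M)$, is the right move to make primary decomposition available; and in the coprimary case every fact you invoke holds --- for $N=M/Q_i$ finitely generated and coprimary over Noetherian $R$ the zerodivisors on $N$ are exactly $\qqq$, and the Vandermonde determinant $\prod_{a<c}(x_c-x_a)$ lies outside the prime $\qqq$, so the adjugate argument kills each coefficient. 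The only point to make explicit is that the primary decomposition $0=\bigcap_i Q_i$ should be taken \emph{minimal} (irredundant), so that the primes occurring are exactly the elements of $\Ass(M)$ and hence all have infinite residue rings; with that one word added, the embedding $M'\hookrightarrow\bigoplus_i M'/Q_i$ together with the closure of ``without polynomial torsion'' under submodules and finite direct sums (the paper's example quoted from Narkiewicz) completes a gap-free proof.
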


\begin{corollary}
Let $R$ be a Noetherian ring and $M$ an $R$-module.  Then $\Int(R,M) = M[X]$ if and only if $R/\ppp$ is finite for every associated prime $\ppp$ of the $R$-module $T(M)/M$, if and only if $R/\ppp$ is finite for every prime conductor ideal $\ppp$ of $R$ in $M$.
\end{corollary}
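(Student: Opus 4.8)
The plan is to assemble the corollary as a short chain of equivalences, each link of which is already available in the excerpt, applied to a suitable auxiliary module. First I would invoke the earlier proposition characterizing when $\Int(R,M)$ collapses to $M[X]$: it asserts that $\Int(R,M) = M[X]$ if and only if the $R$-module $T(M)/M$ is without polynomial torsion. This single step reduces the entire question to a statement about the one module $N = T(M)/M$, and nothing further about $\Int(R,M)$ is needed.

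Next I would apply the cited result \cite[Corollaire 1]{cah0} --- the proposition stated immediately before the corollary --- not to $M$ itself but to $N = T(M)/M$. Since $R$ is Noetherian by hypothesis and $T(M)/M$ is an $R$-module, that proposition applies verbatim and yields: $N$ is without polynomial torsion if and only if $R/\ppp$ is finite for every associated prime $\ppp$ of $N$. Combined with the first step, this already produces the first claimed equivalence, namely that $\Int(R,M) = M[X]$ if and only if $R/\ppp$ is finite for every associated prime $\ppp$ of $T(M)/M$.

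For the second claimed equivalence I would appeal to the identification of annihilators worked out in the paragraph preceding the corollary. There it is recorded that for $x = m/u \in T(M)$ with $m \in M$ and $u \in M^{\reg}$, the annihilator of the image $\overline{x}$ in $T(M)/M$ equals the conductor ideal $(uM :_R m)$; consequently the annihilators of elements of $T(M)/M$ are exactly the conductor ideals of $R$ in $M$, and the associated primes of $T(M)/M$ (the prime annihilators) coincide precisely with the prime conductor ideals of $R$ in $M$. Substituting this identification into the condition obtained in the previous step rewrites ``$R/\ppp$ finite for every associated prime $\ppp$ of $T(M)/M$'' as ``$R/\ppp$ finite for every prime conductor ideal $\ppp$ of $R$ in $M$,'' which is the third listed condition and completes the proof.

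Because every ingredient is already established, there is no genuine obstacle to overcome; the only point that warrants a moment of care is the legitimacy of applying \cite[Corollaire 1]{cah0} to $T(M)/M$ in place of $M$. This is justified precisely because that proposition is stated for an arbitrary module over the Noetherian ring $R$, and $T(M)/M$ is such a module, so no additional hypothesis on $M$ is required beyond the Noetherian assumption already in force.
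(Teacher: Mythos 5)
Your proposal is correct and follows exactly the paper's (implicit) argument: the corollary is stated there as an immediate consequence of the proposition that $\Int(R,M) = M[X]$ if and only if $T(M)/M$ is without polynomial torsion, the cited Cahen--Chabert proposition applied to the $R$-module $T(M)/M$, and the preceding paragraph's identification of the associated primes of $T(M)/M$ with the prime conductor ideals of $R$ in $M$. No further comment is needed.
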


\end{document}